\documentclass[review,3p,square]{elsarticle}

\usepackage{amssymb}
\usepackage{amsmath}
\usepackage{amsthm}
\usepackage{thmtools}
\usepackage{stmaryrd}
\usepackage{bbm}
\usepackage{enumitem}

\usepackage{array}
\usepackage{multirow}
\usepackage{makecell}
\usepackage{longtable}
\usepackage{caption}


\geometry{top=1in,bottom=1in,left=1in,right=1in}

\usepackage{fancyhdr}
\usepackage{hyperref}
\hypersetup{%
  pdftitle={BSDEs},%
  pdfsubject={BSDEs},%
  pdfauthor={ShengJun FAN, Ying Hu},%
  pdfkeywords={BSDEs},%
  pdfstartview=FitH,%
  CJKbookmarks=true,%
  bookmarksnumbered=true,%
  bookmarksopen=true,%
  colorlinks=true, linkcolor=blue, urlcolor=blue, citecolor=blue, %
}

\usepackage{cleveref}
\crefname{thm}{Theorem}{Theorems}
\crefname{pro}{Proposition}{Propositions}
\crefname{lem}{Lemma}{Lemmas}
\crefname{rmk}{Remark}{Remarks}
\crefname{cor}{Corollary}{Corollaries}
\crefname{defn}{Definition}{Definitions}
\crefname{ex}{Example}{Examples}
\crefname{section}{Section}{Sections}
\crefname{subsection}{Subsection}{Subsections}

\newcommand{\eps}{\varepsilon}

\newcommand{\To}{\rightarrow}
\newcommand{\as}{{\rm d}\mathbb{P}\times{\rm d} t-a.e.}
\newcommand{\ass}{{\rm d}\mathbb{P}\times{\rm d} s-a.e.}
\newcommand{\ps}{\mathbb{P}-a.s.}

\newcommand{\esup}{{\rm ess} \sup}

\newcommand{\F}{\mathcal{F}}
\newcommand{\E}{\mathbb{E}}

\newcommand{\s}{\mathcal{S}}

\newcommand{\mcal}{\mathcal{M}}
\newcommand{\ecal}{\mathcal{E}}
\newcommand{\acal}{\mathcal{A}}

\newcommand{\T}{[0,T]}

\newcommand{\Ln}{\mathcal{I\!L}_n}

\newcommand{\p}{{\mathbb P}}
\newcommand{\Q}{{\mathbb Q}}
\newcommand{\R}{{\mathbb R}}
\newcommand{\RE}{\forall}

\newcommand {\Dis}{\displaystyle}

\newtheorem{thm}{Theorem}[section]

\newtheorem{pro}[thm]{Proposition}
\newtheorem{rmk}[thm]{Remark}

\newtheorem{defn}[thm]{Definition}
\newtheorem{ex}[thm]{Example}

\journal{arXiv}
\begin{document}
\begin{frontmatter}

\title{{1D nonlinear backward stochastic differential equations:
\\ a unified theory and applications}
\tnoteref{found}}
\tnotetext[found]{This work is supported by National Natural Science Foundation of China (Nos. 12171471 and 11631004), by Key Laboratory of Mathematics for Nonlinear Sciences (Fudan University), Ministry of Education, Handan Road 220, Shanghai 200433, China; by Lebesgue Center of Mathematics ``Investissements d'avenir" program-ANR-11-LABX-0020-01, by CAESARS-ANR-15-CE05-0024 and by MFG-ANR-16-CE40-0015-01.
\vspace{0.2cm}}


\author[Fan]{Shengjun Fan} \ead{shengjunfan@cumt.edu.cn}
\author[Hu]{Ying Hu\corref{cor}} \ead{ying.hu@univ-rennes1.fr}
\author[Tang]{Shanjian Tang} \ead{sjtang@fudan.edu.cn} \vspace{-0.5cm}

\affiliation[Fan]{organization={School of Mathematics, China University of Mining and Technology},
            city={Xuzhou 221116},
            country={China}}

\affiliation[Hu]{organization={Univ. Rennes, CNRS, IRMAR-UMR6625},
            city={F-35000, Rennes},
            country={France}}

\affiliation[Tang]{organization={Department of Finance and Control Sciences, School of Mathematical Sciences, Fudan University},
            city={Shanghai 200433},
            country={China}}

\cortext[cor]{Corresponding author\vspace{0.2cm}}

\begin{abstract}
  Since the celebrated paper by El Karoui, Peng and Quenez \cite[Mathematical Finance, 7 (1997), 1--71]{ElKarouiPengQuenez1997MF}, backward stochastic differential equations have found wide applications in stochastic control, financial technology and machine learning.
In this paper, we present a comprehensive theory on the existence and uniqueness  of adapted solutions to a one-dimensional nonlinear backward stochastic differential equation (1D BSDE for short), and  assume that the generator $g$ has a unilateral linear or super-linear growth in the first unknown variable $y$,  and has an at most quadratic growth in the second unknown variable $z$. We develop a unified methodology, featured by the test function method and the a priori estimate technique,  to  establish several existence theorems and comparison theorems, which immediately yield corresponding existence and uniqueness results. We also  overview  relevant known results and give some practical applications of our theoretical results. Finally, we list some open problems  on the well-posedness of 1D BSDEs.\vspace{0.2cm}
\end{abstract}

\begin{keyword}
Backward stochastic differential equation \sep A unified theory \sep Feynman-Kac formula \\  \hspace*{1.8cm} Existence and uniqueness \sep Comparison theorem \sep  Open problems \sep  $g$-expectation.
\vspace{0.2cm}

\MSC[2010] 60H10\vspace{0.2cm}
\end{keyword}

\end{frontmatter}
\vspace{-0.4cm}

\section{Introduction}
\label{sec:1-Introduction}
\setcounter{equation}{0}

Fix a real $T>0$ and an integer $d\geq 1$. Let $(\Omega, \F, \mathbb{P})$ be a complete probability space equipped with augmented filtration $(\F_t)_{t\in\T}$ generated by a standard $d$-dimensional Brownian motion $(B_t)_{t\in\T}$, and assume that $\F_T=\F$. The equality or inequality between random elements is  understood in the sense of $\ps$ We consider the following one-dimensional backward stochastic differential equation (1D BSDE in short):
\begin{equation}\label{eq:1}
  Y_t=\xi+\int_t^T g(s,Y_s,Z_s){\rm d}s-\int_t^T Z_s\cdot {\rm d}B_s, \ \ t\in\T.
\end{equation}
Here,  $\xi$ is called the terminal value being an $\F_T$-measurable real random variable, the random field
$$
g(\omega, t, y, z):\Omega\times\T\times\R\times\R^d \to \R
$$
is called the generator of \eqref{eq:1}, which is $(\F_t)$-adapted for each $(y,z)$, and the pair of $(\F_t)$-adapted and $\R\times\R^d$-valued processes $(Y_t,Z_t)_{t\in\T}$ is called a solution of \eqref{eq:1} if $\ps$, $t\mapsto Y_t$ is continuous, $t\mapsto |g(t,Y_t,Z_t)|+|Z_t|^2$ is integrable, and \eqref{eq:1} is satisfied. Denote by BSDE$(\xi,g)$ the BSDE with the terminal condition $\xi$ and the generator $g$, which are called the parameters of the BSDE.

For  convenience of exposition, throughout the  paper,  let us always fix the constants $\alpha\in [1,2]$, $\beta\geq 0$, $\gamma>0$, $\delta\in [0,1]$, and $\lambda\in \R$,  and an $(\F_t)$-progressively measurable $\R_+$-valued stochastic process $(f_t)_{t\in \T}$. We assume that the generator $g$ satisfies $\as$,
\begin{equation}\label{eq:1.2}
 {\rm sgn}(y)g(\omega,t,y,z)\leq f_t(\omega)+\beta |y|(\ln(e+|y|))^\delta+\gamma |z|^\alpha(\ln(e+|z|))^\lambda
\end{equation}
for any $(y,z)\in \R\times\R^d$.
We usually say that $g$ has a unilateral linear growth in the state variable $y$ when $\delta=0$, and a unilateral super-linear growth in $y$ when $\delta\in (0,1]$. Furthermore, for the case of $\lambda=0$, we say that $g$ has a power sub-linear growth in the state variable $z$ when $\alpha\in (0,1)$, a linear growth in $z$ when $\alpha=1$, a sub-quadratic growth in $z$ when $\alpha\in (1,2)$, a quadratic growth in $z$ when $\alpha=2$, a super-quadratic growth in $z$ when $\alpha>2$, and for the case of $\alpha=1$ and $\lambda\neq 0$, we say that $g$ has a logarithmic sub-linear growth in $z$ when $\lambda<0$, and a logarithmic super-linear growth in $z$ when $\lambda>0$.

\subsection{Overview of relevant existing results\vspace{0.2cm}}

BSDEs were initially introduced by Bismut~\cite{Bismut1973JMAA,Bismut1976SICON,Bismut1978SIAMReview} in the linear case. General nonlinear BSDEs were first investigated by Pardoux and Peng~\cite{PardouxPeng1990SCL}, where an existence and uniqueness result was established on adapted solutions of multidimensional BSDEs with square-integrable parameters and uniformly Lipschitz continuous generators. Subsequently, BSDEs have received  an  extensive  attention due to its various connections to numerous topics such as stochastic control, financial technology, machine learning, partial differential equations (PDEs in short), nonlinear mathematical expectation and so on. In particular, the paper by El Karoui, Peng and Quenez \cite[Mathematical Finance, 7 (1997), 1--71]{ElKarouiPengQuenez1997MF} has seen a pervasive role of BSDEs in stochastic control and financial technology. The reader is also referred to for example \cite{Peng1990SICON,Peng1991Stochastics,Peng1992Stochastics,Peng1993AMO, PardouxTang1999PTRF,YongZhou1999,Kobylanski2000AP,KohlmannTang2002SPA,Tang2002RDMF,
KohlmannTang2003SICON,Tang2003SICON,HuImkeller2005AAP,HuZhou2005SICON,Jiang2005SPA,
Jiang2008AAP,Tevzadze2008SPA,DelbaenTang2010PTRF,HuZhou2012SICON,
PardouxRascanu2014Book,XingZitkovic2018AoP,
FanHuTangArXiv2024,TangZhouArXiv2024,FanWangYong2025JDE} for more details.
Note that recently, BSDEs of mean-field type have found wide applications in Mean Field Games, see, e.g. \cite{CD1,CD2,CDLL}.

In particular, a lot of  attentions have been paid on the well-posedness of adapted solutions of BSDEs under various growth and/or continuity  of the generator $g$ with respect to both unknown variables $(y,z)$ and various integrability of the parameters $(\xi, f)$. Generally speaking, these efforts fall into three different classes. The first one focuses on  $L^p$ solution   ($p\geq 1$)  of BSDEs. Relevant classical results are available in \cite{LepeltierSanMartin1997SPL,Pardoux1999Nonlinear,BriandCarmona2000IJSA,
BriandDelyonHu2003SPA,Jia2008CRA,Jia2008PHDThesis,FanJiangDavison2010CRA,
Jia2010SPA,FanLiu2010SPL,FanJiang2012JAMC,FanJiang2013AMSE,Fan2015JMAA,
Fan2016SPA,Fan2016SPL,Fan2018JOTP,XiaoFan2020KM} when the generators $g$ have a linear/sub-linear growth in the unknown variable $z$. The reader is also referred to
\cite{Bahlali2001CRAS,Bahlali2002ECP,BahlaliEssakyHassaniPardoux2002CRAS,
LepeltierSanMartin2002Bernoulli,
BahlaliElAsri2012BSM,BahlaliEssakyHassani2015SIAM,BahlaliKebiri2017Stochastics,
BahlaliEddahbiOuknine2017AoP,Yanghanlin2017Arxiv} when the generators $g$ have a super-linear growth in the unknown variable $z$. The second one is devoted to the bounded solution of BSDEs when the generators $g$ have a quadratic/super-quadratic growth in the unknown variable $z$, see for example \cite{LepeltierSanMartin1998Stochsatics,Kobylanski2000AP,Fan2016SPA,
BriandLepeltierSanMartin2007Bernoulli,Tevzadze2008SPA,DelbaenHuBao2011PTRF,
BriandElie2013SPA,BarrieuElKaroui2013AoP,HuTang2016SPA,FanLuo2017BKMS} for more details. The last one concerns the weakest possible integrability of $(\xi, f)$ for existence and uniqueness of  adapted solution of BSDEs when the generators $g$ have some  growth and/or continuity  in $(y,z)$. Such a study is  dated back to  \cite{BriandHu2006PTRF,BriandHu2008PTRF,DelbaenHuRichou2011AIHPPS, Richou2012SPA,DelbaenHuRichou2015DCDS} for the quadratic BSDEs, and subsequently developed  in \cite{HuTang2018ECP,BuckdahnHuTang2018ECP,FanHu2019ECP,OKimPak2021CRM} for the linearly growing BSDEs, and recently sprang up  in \cite{FanHu2021SPA,FanHuTang2023SPA,FanHuTang2023SCL,FanHuTang2024SCL2}  when the generator $g$ has a sub-quadratic, super-linear or logarithmic sub-linear growth in the unknown variable $z$.  The so-called localization procedure, $\theta$-difference technique and the test function method were combined to  obtain  the following existence and uniqueness of a BSDE when the generator $g$ satisfies~\eqref{eq:1.2}.

Firstly, suppose that the generator $g$ has a unilateral linear growth in $y$ and a linear growth in $z$, i.e., it satisfies \eqref{eq:1.2} with $(\delta, \lambda,  \alpha)=(0, 0, 1)$. It is well known that if the data $|\xi|+\int_0^T f_s {\rm d}s\in L^p$ for $p>1$, then BSDE$(\xi,g)$ admits a solution in $\s^p\times\mcal^p$, and the solution is unique when $g$ is further uniformly Lipschitz continuous  in $(y,z)$. The reader is referred to \cite{PardouxPeng1990SCL,ElKarouiPengQuenez1997MF,LepeltierSanMartin1997SPL,
BriandDelyonHu2003SPA,FanJiang2012JAMC} for more details. Recently,  \cite{HuTang2018ECP,BuckdahnHuTang2018ECP,FanHu2019ECP,OKimPak2021CRM} obtained existence of an unbounded solution to a linearly growing BSDE$(\xi,g)$ under  the more general condition $|\xi|+\int_0^T f_s {\rm d}s\in L\exp(\mu\sqrt{2\ln L})$ for some $\mu\geq \gamma \sqrt{T}$ (which is weaker than $L^p$-integrability ($p>1$) and stronger than $L\ln L$-integrability). They also established uniqueness of the unbounded solution provided that $g$ is  monotone  in $y$ and uniformly Lipschitz continuous  in $z$. Generally speaking, the generator $g$ allows a general growth in $y$ when $g$ is  monotone  in $y$. Relevant works are available in \cite{Pardoux1999Nonlinear,BriandCarmona2000IJSA,BriandDelyonHu2003SPA,
LepeltierMatoussiXu2005AdAP,BriandLepeltierSanMartin2007Bernoulli,FanJiang2013AMSE,
Fan2015JMAA,LionnetReisSzpruch2015AAP,Fan2016SPA,LionnetReisSzpruch2018AAP}.

Secondly, suppose that the generator $g$ has a unilateral linear growth in $y$ and a power sub-linear growth in $z$, i.e., it satisfies  \eqref{eq:1.2} with $(\delta, \lambda) = (0, 0)$ and $\alpha\in (0,1)$. Briand et al.~\cite{BriandDelyonHu2003SPA} show that if the data $|\xi|+\int_0^T f_s {\rm d}s\in L^1$, then BSDE$(\xi,g)$ admits a solution $(Y_t,Z_t)_{t\in\T}$ such that $(Y_t)_{t\in\T}$ is of class (D), and the solution is unique when $g$ further satisfies a monotonicity in $y$ and the uniformly Lipschitz continuity  in $z$. See for example \cite{BriandHu2006PTRF,Fan2016SPA,Fan2018JOTP} for more details. Very recently,  the three authors \cite{FanHuTang2023SCL}  prove an existence and uniqueness result for the $L^1$ solution when the generator $g$ has a unilateral linear growth in $y$ and a logarithmic sub-linear growth in $z$, i.e., it satisfies~\eqref{eq:1.2} with $(\delta, \alpha)= (0, 1)$ and $\lambda\in (-\infty,-1/2)$, see also \cite{FanHuTang2024SCL2} for deeper discussions.

Thirdly, suppose that the generator $g$ has a unilateral linear/super-linear growth in $y$ and a logarithmic super-linear growth in $z$, i.e., it satisfies~\eqref{eq:1.2} with $\delta\in [0,1]$, $\alpha=1$ and $\lambda\in [0,+\infty)$. Let $p:=\delta\vee (\lambda+{1\over 2})\vee (2\lambda)\in [{1\over 2},+\infty)$. Very recently, it was shown in \cite{FanHuTang2023SPA} that if the data $|\xi|+\int_0^T f_s {\rm d}s\in L\exp(\mu (\ln L)^p)$ for some $\mu>\mu_0$ with some value $\mu_0$, then BSDE$(\xi,g)$ admits a solution $(Y_t,Z_t)_{t\in\T}$ such that $(|Y_t|\exp(\mu(t)(\ln(e+|Y_t|))^p))_{t\in\T}$ is of class (D) for some nonnegative and increasing function $\mu(t)$ defined on $\T$ with $\mu(T)=\mu$, and the solution is unique when the generator $g$ further satisfies an extended monotonicity  in $y$ and a uniform continuity  in $z$, or a convexity/concavity  in $(y,z)$, see assumptions (UN1)-(UN3) in \cite{FanHuTang2023SPA} for more details. Furthermore, \cite{BahlaliElAsri2012BSM,BahlaliKebiri2017Stochastics} verified existence of a solution to BSDE$(\xi,g)$ in the space of $\s^p\times\mcal^2$ for some sufficiently large $p>2$, when the data $|\xi|+\int_0^T f_s {\rm d}s\in L^p$ and the generator $g$ satisfies \eqref{eq:1.2} with $(\delta, \alpha)=(1, 1)$, $\lambda={1\over 2}$ and $|g(\omega,t,y,z)|$ being the left side of \eqref{eq:1.2}. They also proved uniqueness of the solution when $g$ further satisfies a  local monotonicity  in $(y,z)$. Related works on super-linearly growing BSDEs are available in \cite{Bahlali2001CRAS,Bahlali2002ECP,BahlaliEssakyHassaniPardoux2002CRAS, LepeltierSanMartin1998Stochsatics,BahlaliEssakyHassani2010CRM,
BahlaliHakassouOuknine2015Stochastics,
BahlaliEssakyHassani2015SIAM,LionnetReisSzpruch2016ArXiv}, where the solution of BSDE$(\xi,g)$ in the space of $\s^p\times\mcal^p$ is considered under  the data $|\xi|+\int_0^T f_s {\rm d}s\in L^p$ for some $p>1$, and several kinds of locally Lipschitz continuity  or local monotonicity  of $g$ in $(y,z)$ are usually used  to guarantee uniqueness of the solution of BSDE$(\xi,g)$.

Fourthly, suppose that the generator $g$ has a unilateral linear growth in $y$ and a sub-quadratic growth in $z$, i.e., it satisfies~\eqref{eq:1.2} with $(\delta, \lambda)=(0, 0)$ and $\alpha\in (1,2)$. Let $\alpha^*$ represent the conjugate of $\alpha$. It was proved in \cite{FanHu2021SPA} that if the data $|\xi|+\int_0^T f_s {\rm d}s\in \exp(\mu L^{2 \over \alpha^*})$ for some $\mu>\mu_0$ with a certain value $\mu_0$, which is weaker than $\exp(\mu L)$-integrability and stronger than $L^p$-integrability ($p>1$), then BSDE$(\xi,g)$ admits a solution $(Y_t,Z_t)_{t\in\T}$ such that $(\exp(\mu(t)|Y_t|^{2\over \alpha^*}))_{t\in\T}$ is of class (D) for some nonnegative and increasing function $\mu(t)$ defined on $\T$ with $\mu(T)=\mu$, and the solution is unique when $|\xi|+\int_0^T f_s {\rm d}s\in \exp(\mu L^{2\over \alpha^*})$ for each $\mu>0$ and the generator $g$ further satisfies an extended convexity/concavity in $(y,z)$, see assumption (H2') in \cite{FanHu2021SPA} for more details.

Finally, suppose that the generator $g$ has a unilateral linear growth in $y$ and a quadratic growth in $z$, i.e., it satisfies~\eqref{eq:1.2} with $(\delta, \lambda, \alpha)=(0, 0, 2)$. It is well known from \cite{Kobylanski2000AP} that if the data  $|\xi|+\int_0^T |f_s| {\rm d}s\in L^\infty$, then BSDE$(\xi,g)$ admits a solution $(Y_t,Z_t)_{t\in\T}$ such that $(Y_t)_{t\in\T}\in \s^\infty$, and the solution is unique if $g$ further satisfies the uniformly Lipschitz continuity  in $y$ and a locally Lipschitz continuity in $z$. The reader is referred to \cite{BriandElie2013SPA,Fan2016SPA,FanLuo2017BKMS,LuoFan2018SD} for more details on the bounded solution of quadratic BSDEs. Subsequently, \cite{BriandHu2006PTRF,BriandHu2008PTRF,DelbaenHuRichou2011AIHPPS,
DelbaenHuRichou2015DCDS}  proved existence and uniqueness of an unbounded solution to quadratic BSDE$(\xi,g)$  under the data  $|\xi|+\int_0^T f_s {\rm d}s\in \exp(\mu L)$ for $\mu:=\gamma e^{\beta T}$, where $g$ is required to be uniformly Lipschitz continuous in $y$ and convex/concave in $z$ for the uniqueness of the solution, see also \cite{BarrieuElKaroui2013AoP,FanHuTang2020CRM} for more details.
A class of quadratic BSDEs subject to   $L^p$-integrable terminal values $(p>1)$ are studied in recent works, see for example \cite{BahlaliEddahbiOuknine2017AoP,Yanghanlin2017Arxiv,
Bahlali2020GJM,BahlaliTangpi2021Arxiv}. In addition,  \cite{DelbaenHuBao2011PTRF} show that super-quadratic BSDEs~\eqref{eq:1.2} (with $(\delta, \lambda) = (0, 0)$ and $\alpha>2$), are not solvable in general and the solution is not unique even if the solution exists. Some relevant solvability results
under the Markovian setting are available in \cite{DelbaenHuBao2011PTRF,
MasieroRichou2013EJP,Richou2012SPA,CheriditoNam2014JFA}.

The  brief review above is depicted in the following Table 1: existing solvability results. It is flavored with the authors' own tastes,  and is also restricted within the scope of their knowledge.  Certainly, it does not exhaust all the developments of BSDEs in the last half a century, which seems to be an impossible task to the authors within such a very limited space, and is also not the objective of the paper.  We would apologize to all those authors of possibly neglected  papers on BSDEs.

\subsection{Organization of the  paper\vspace{0.2cm}}

The  paper will present a comprehensive theory on the well-posedness of 1D nonlinear BSDEs to  cover  most existing results mentioned above. The rest of this paper is organized as follows. In Section 2, we formulate the test function method and prove with  the combined techniques of a priori estimate and localization a general existence result (see \cref{dfn:2.1} and \cref{thm:2.2}),  which yield via right  test functions several existence theorems on the adapted solutions of 1D BSDEs (see \cref{thm:2.3,thm:2.6})  for both cases of logarithmic quasi-linear growth and sub-quadratic/quadratic growth, respectively. In Section 3, we  focus on the comparison theorems of the adapted solutions of 1D BSDEs for both cases of at most linear growth (see \cref{thm:3.3}) and  super-linear at most quadratic growth (see \cref{thm:3.6}),

\begin{center}
\captionof{table}{existing solvability results}\vspace{-0.1cm}
{\small
\begin{tabular}{
|>{\centering\arraybackslash}m{1.9cm}
|>{\centering\arraybackslash}m{2.4cm}
|>{\centering\arraybackslash}m{3cm}
|>{\centering\arraybackslash}m{3.2cm}|>{\centering\arraybackslash}m{1.4cm}
|>{\centering\arraybackslash}m{1.6cm}|
}
\hline

\parbox[c][1.5cm]{1.9cm}{\centering Value of parameters in Ineq.~\eqref{eq:1.2} }& Space of $\xi+\int_0^T f_s {\rm d}s$ &
Space of solution & \makecell{Further condition on $g$ \\ for uniqueness} & References & Related references\\

\hline

\multirow{4}{*}{\parbox{2cm}{\centering$\delta=0$ \\ $\alpha=1$ \\ $\lambda=0$}} & \parbox[c][1.3cm]{2.4cm}{\centering $L^p \text{ for } \ p>1$} & $(Y,Z)\in \s^p\times\mcal^p$ &
uniformly Lipschitz continuous in $(y,z)$ & \cite{PardouxPeng1990SCL,ElKarouiPengQuenez1997MF,LepeltierSanMartin1997SPL,
BriandDelyonHu2003SPA,FanJiang2012JAMC} & \multirow{4}{*}{\parbox{1.8cm}{\centering\cite{Pardoux1999Nonlinear,BriandCarmona2000IJSA,BriandDelyonHu2003SPA,
LepeltierMatoussiXu2005AdAP,BriandLepeltierSanMartin2007Bernoulli,FanJiang2013AMSE,
Fan2015JMAA,LionnetReisSzpruch2015AAP,Fan2016SPA,LionnetReisSzpruch2018AAP}}} \\

\cline{2-5}

& \makecell{$L\exp(\mu\sqrt{2\ln L})$ \\ for some \\ $\mu\geq \gamma \sqrt{T}$} & \parbox[c][2cm]{3cm}{\centering $|Y|\exp(\mu(t)\sqrt{2\ln |Y|})$ is of class (D) with $\mu(T)=\mu$} & monotone in $y$ and uniformly Lipschitz continuous in $z$ & \cite{HuTang2018ECP,BuckdahnHuTang2018ECP,FanHu2019ECP,OKimPak2021CRM} & \\

\hline

\makecell{$\delta=0$\\ $\alpha\in (0,1)$\\ $\lambda=0$} & $L^1$ & $Y$ is of class (D) & monotone in $y$ and uniformly Lipschitz continuous in $z$ & \cite{BriandDelyonHu2003SPA} & \cite{BriandHu2006PTRF,Fan2016SPA,Fan2018JOTP} \\

\hline

\makecell{$\delta=0$\\ $\alpha=1$\\ $\lambda\in (-\infty,-{1\over 2})$} & $L^1$ & $Y$ is of class (D) & extended monotonicity in $y$ and logarithmic uniform continuity in $z$ & \cite{FanHuTang2023SCL} & \cite{FanHuTang2024SCL2} \\

\hline

\makecell{$\delta\in [0,1]$\\ $\alpha=1$\\ $\lambda\in [0,+\infty)$}& $L\exp(\mu (\ln L)^p)$ for some $\mu>\mu_0$, where $p:=\delta\vee (\lambda+{1\over 2})\vee (2\lambda)$ & $|Y|\exp(\mu(t)(\ln(e+|Y|))^p)$ is of class (D) with $\mu(T)=\mu$ & extended monotonicity in $y$ and uniform continuity in $z$, or convexity/ concavity in $(y,z)$ & \cite{FanHuTang2023SPA} & / \\

\hline

\makecell{$\delta=1$\\ $\alpha=1$\\ $\lambda={1\over 2}$}& $L^p$ for a sufficiently large $p>2$ & $(Y,Z)\in \s^p\times \mcal^2$ & some local monotonicity in $(y,z)$& \cite{BahlaliElAsri2012BSM,BahlaliKebiri2017Stochastics} & \cite{Bahlali2001CRAS,Bahlali2002ECP,BahlaliEssakyHassaniPardoux2002CRAS, LepeltierSanMartin1998Stochsatics,BahlaliEssakyHassani2010CRM,
BahlaliHakassouOuknine2015Stochastics,
BahlaliEssakyHassani2015SIAM,LionnetReisSzpruch2016ArXiv}\\

\hline

\makecell{$\delta=0$\\ $\alpha\in (1,2)$\\ $\lambda=0$}& $\exp(\mu L^{2\over \alpha^*})$ for some $\mu>\mu_0$, where $\alpha^*$ is conjugate of $\alpha$& \makecell{$\exp(\mu(t)|Y|^{2\over \alpha^*})$\\ is of class (D)\\ with $\mu(T)=\mu$}& extended convexity/ concavity in $(y,z)$; for each $\mu(t)\equiv \mu>0$& \cite{FanHu2021SPA}& / \\

\hline

\multirow{4}{*}{\parbox{2cm}{\centering $\delta=0$\\ $\alpha=2$\\ $\lambda=0$}} & $L^\infty$& $Y\in \s^\infty$ & uniformly Lipschitz continuous in $y$ and locally Lipschitz continuous in $z$ &\cite{Kobylanski2000AP} & \cite{BriandElie2013SPA,Fan2016SPA,FanLuo2017BKMS,LuoFan2018SD}\\

\cline{2-6}

&\makecell{$\exp(\mu L)$\\ with $\mu:=\gamma e^{\beta T}$}& \makecell{$\exp(\mu(t) |Y|)$ \\ is of class (D)\\ with $\mu(t):=\gamma e^{\beta t}$}& uniformly Lipschitz continuous in $y$ and convex/concave in $z$&\cite{BriandHu2006PTRF,BriandHu2008PTRF,DelbaenHuRichou2011AIHPPS,
DelbaenHuRichou2015DCDS}& \cite{BarrieuElKaroui2013AoP,FanHuTang2020CRM}\\
\hline
\end{tabular}
}\vspace{0.2cm}
\end{center}

\noindent respectively, and establish some existence and uniqueness results (see \cref{thm:3.12,thm:3.13}). We first give a crucial a priori estimate (see \cref{pro:3.1}) associated with the test function, and prove \cref{thm:3.3,thm:3.6} with the proper test function and  the $\theta$-difference technique, respectively. This yields naturally the desired uniqueness results. Some examples and remarks are provided in the last two sections to illustrate the preceding results. See \cref{rmk:2.4,rmk:2.5*,rmk:2.7*,rmk:2.7} and \cref{ex:2.1,ex:2.2} in Section 2 as well as \cref{rmk:3.2,rmk:3.5,rmk:3.6,rmk:3.8*,rmk:3.8,rmk:3.13*,rmk:3.14*}, \cref{ex:3.1,ex:3.2,ex:3.3} and \cref{pro:3.4} in Section 3. In Section 4, several practical applications of our results are introduced, including the conditional $g$-expectation (see \cref{def:4.1} and \cref{pro:4.2,pro:4.3}), the dynamic utility process (see \cref{pro:4.4} and \cref{thm:4.5}), risk measure (\cref{ex:4.8}) and nonlinear Feynman-Kac formula (see \cref{def:4.7} and \cref{thm:4.8}), and some commentaries on known related works are also made, see \cref{rmk:4.3*,rmk:4.6,rmk:4.11}. Finally in Section 5,  we list several open problems on 1D BSDEs, and in Appendix,  we prove a key inequality (see \cref{pro:2.5}) used in Section 2, which is interesting in its own right.

\subsection{Notations and spaces\vspace{0.2cm}}

In this subsection, we give some necessary notations and spaces used in this paper. Let $\R_+:=[0,+\infty)$. For $a,b\in \R$, we denote $a\wedge b:=\min\{a,b\}$, $a^+:=\max\{a,0\}$ and $a^-:=-\min\{a,0\}$, and ${\rm sgn}(x):={\bf 1}_{x>0}-{\bf 1}_{x\leq 0}$, where ${\bf 1}_A$ is the indicator function of set $A$. Denote by ${\bf S}$  the set of $\R_+$-valued continuously differentiable functions $\phi(s,x)$ defined on $[0,T]\times\R_+$ such that $\phi_s(\cdot,\cdot)\geq 0$, $\phi_x(s,\cdot)>0$ and $\phi_{xx}(s,\cdot)>0$, where $\phi_s(\cdot,\cdot)$ is the first-order partial derivative of $\phi(\cdot,\cdot)$ with respect to the first argument, and by $\phi_x(\cdot,\cdot)$ and $\phi_{xx}(\cdot,\cdot)$ respectively the first- and second-order partial derivative of $\phi(\cdot,\cdot)$ with respect to the second argument. Denote by ${\bf \bar S}$ the set of $\R_+$-valued functions $h(t,x,\bar x)$ defined on $\T\times\R_+\times \R_+$ such that $h(t,\cdot,\bar x)$ is nondecreasing for each $(t,\bar x)\in \T\times \R_+$. Denote by $\s^\infty(\T;\R)$ (or $\s^\infty$)  the set of $(\F_t)$-adapted and continuous bounded real processes $(Y_t)_{t\in\T}$. For each $p>0$, let $\s^p(\T;\R)$ (or $\s^p$) be the set of $(\F_t)$-adapted and continuous real processes $(Y_t)_{t\in\T}$ satisfying
$$\|Y\|_{{\s}^p}:=\E\left[\sup_{t\in\T} |Y_t|^p\right]^{{1\over p}\wedge 1}<+\infty,\vspace{0.2cm}$$
and $\mcal^p(\T;\R^d)$ (or $\mcal^p$) the set of all $(\F_t)$-adapted $\R^d$-valued processes $(Z_t)_{t\in\T}$ satisfying
$$
\|Z\|_{\mcal^p}:=\E\left[\left(\int_0^T |Z_t|^2{\rm d}t\right)^{p/2}\right]^{{1\over p}\wedge 1}<+\infty.\vspace{0.1cm}
$$
Denote by $\Sigma_T$ the set of all $(\F_t)$-stopping times $\tau$ valued in $\T$. An $(\F_t)$-adapted real process $(X_t)_{t\in\T}$ is said to be of class (D), if the family $\{X_\tau: \tau\in \Sigma_T\}$ is uniformly integrable.

Now, fix $t\in \T$. For $p,\mu>0$, we denote by $L^p(\F_t)$ and $L^\infty(\F_t)$ the set of $\F_t$-measurable real random variables $\xi$ such that $\E[|\xi|^p]<+\infty$ and $|\xi|\leq M$ for some real $M>0$, respectively, and define the following three spaces of $\F_t$-measurable real random variables:
$$
L(\ln L)^p (\F_t):=\left\{\xi\in \F_t\left| \E\left[|\xi|(\ln(e+|\xi|))^p\right]<+\infty\right.\right\},
$$
$$
L\exp[\mu(\ln L)^p](\F_t):=\left\{\xi\in \F_t\left| \E\left[|\xi|\exp{\left(\mu (\ln(e+|\xi|))^p\right)}\right]<+\infty\right.\right\}
$$
and
$$
\exp(\mu L^p)(\F_t):=\left\{\xi\in \F_t\left| \E\left[\exp{\left(\mu |\xi|^p\right)}\right]<+\infty\right.\right\}.
$$
It is clear that for each $0<p<q$ and $0<\bar \mu,\tilde \mu<\mu$, we have
$$
L^\infty(\F_t)\subset L^q(\F_t)\subset L^p(\F_t), \ \ \ L(\ln L)^q(\F_t)\subset L(\ln L)^p(\F_t),
$$
$$
L\exp[\mu(\ln L)^q](\F_t)\subset L\exp[\bar\mu(\ln L)^q](\F_t)\subset L\exp[\tilde\mu(\ln L)^p](\F_t),
$$
and
$$
\exp(\mu L^q)(\F_t)\subset \exp(\bar\mu L^q)(\F_t)\subset \exp(\tilde\mu L^p)(\F_t).
$$
It can also be verified that for each $\mu,\bar\mu,r>0$ and $0<p<1<q$, we have
$$
 L^\infty(\F_t)\subset \exp(\mu L^r)(\F_t)\subset L\exp[\bar\mu(\ln L)^q](\F_t) \subset L\exp[\mu \ln L](\F_t)=L^{1+\mu}(\F_t)
$$
and
$$
L^q(\F_t)\subset L\exp[\mu(\ln L)^p](\F_t)\subset L(\ln L)^r(\F_t)\subset L^1(\F_t).
$$
Furthermore, for each $p,\mu>0$ and $0<\bar p\leq 1<\tilde p$, the spaces
$$
L(\ln L)^p(\F_t),\ \ L\exp[\mu(\ln L)^{\bar p}](\F_t),\ \ \bigcup\limits_{\bar\mu>\mu} L\exp[\bar\mu(\ln L)^{\tilde p}](\F_t)\ \  {\rm and}\ \ \bigcap\limits_{\bar\mu>0}\exp(\bar\mu L^p)(\F_t)
$$
are all the Orlicz hearts,  corresponding to the following Young functions
$$
x(\ln(e+x))^p,\ \ x\exp[\mu(\ln(e+x))^{\bar p}],\ \ x\exp[\mu(\ln(e+x))^{\tilde p}]\ \ {\rm and}\ \ \exp(x^p)-1,
$$
respectively. More details on the Orlicz space, the Orlicz class and the Orlicz heart are refereed to \cite{EdgarSucheston1992book,CheriditoLi2009MF}. Finally, in all notations of the spaces of random variables, the $\sigma$-algebra $(\F_T)$ is usually omitted whenever  there is no confusion.

\section{Existence results}
\label{sec:2-Existence}
\setcounter{equation}{0}

\subsection{The test function method and a general existence result\vspace{0.2cm}}

Let us first introduce the following assumptions on the generator $g$.
\begin{enumerate}

\item [(EX1)] $\as$, $g(\omega,t,\cdot,\cdot)$ is continuous.

\item [(EX2)] There exist two $\R_+$-valued functions $H(\omega,t,x)$ and $\Gamma(\omega,t,x)$ defined on $\Omega\times\T\times\R_+$, which are $(\F_t)$-progressively measurable for each $x\in\R_+$ and nondecreasing with respect to the variable $x$, such that $\as$, for each $(y,z)\in \R\times\R^d$,
    $$
    |g(\omega,t,y,z)|\leq H(\omega,t,|y|)+\Gamma (\omega,t,|y|) |z|^2,
    $$
    and $\ps$, for each $x\in \R_+$, $\Gamma(\omega,\cdot,x)$ is left-continuous on $\T$ and
    $$
    \int_0^T H(\omega,t,x){\rm d}t+\sup\limits_{t\in\T}\Gamma(\omega,t,x)<+\infty.
    $$

\item [(EX3)] There exists a function $h(\cdot,\cdot,\cdot)\in {\bf\bar S}$ such that $\as$, for each $(y,z)\in \R\times\R^d$,
    $$
    {\rm sgn}(y)g(\omega,t,y,z)\leq f_t(\omega)+h(t,|y|,|z|).
    $$
\end{enumerate}

\begin{defn}\label{dfn:2.1}
Assume that the generator $g$ satisfies assumption (EX3). A function $\varphi(\cdot,\cdot)\in {\bf S}$ is called a test function for $g$ or $h$ if it
satisfies that for each $(s,x,\bar x)\in [0,T]\times \R_+\times\R_+$,
\begin{equation}\label{eq:2.1}
-\varphi_x(s,x)h(s,x,\bar x)+{1\over 2}\varphi_{xx}(s,x)|\bar x|^2+\varphi_s(s,x)\geq 0.
\end{equation}
\end{defn}

\begin{thm}\label{thm:2.2}
Assume that $\xi$ is an $\F_T$-measurable random variable and the generator $g$ satisfies the above assumptions (EX1)-(EX3). If there exists a test function $\varphi(\cdot,\cdot)\in {\bf S}$ for $g$ such that
\begin{equation}\label{eq:2.2}
\E\left[\varphi\left(T,\ |\xi|+\int_0^T f_s{\rm d}s\right)\right]<+\infty,
\end{equation}
then BSDE$(\xi,g)$ admits a solution $(Y_t,Z_t)_{t\in\T}$ such that the process
$$\varphi(t, |Y_t|+\int_0^t f_s{\rm d}s), \quad t\in \T$$
 is of class (D). Furthermore, the process $(\varphi(t, |Y_t|+\int_0^t f_s{\rm d}s))_{t\in \T}$ is a sub-martingale. In particular, we have\vspace{0.1cm}
\begin{equation}\label{eq:2.3}
\varphi\left(t, |Y_t|+\int_0^t f_s{\rm d}s\right)\leq \E\left[\left.\varphi\left(T, |\xi|+\int_0^T f_s{\rm d}s\right)\right|\F_t\right],\ \ t\in\T.\vspace{0.2cm}
\end{equation}
\end{thm}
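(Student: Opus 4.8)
The plan is to establish the two assertions together, using the submartingale property both as the final conclusion and as the a priori estimate that powers the existence argument. The core is the following test-function computation. Fix a solution $(Y,Z)$, set $A_t:=|Y_t|+\int_0^t f_s\,{\rm d}s$ and $M_t:=\varphi(t,A_t)$. Applying the Tanaka--Meyer formula to $|Y_t|$ and then It\^o's formula to $\varphi(t,A_t)$, the martingale part is $\int \varphi_x(s,A_s)\,{\rm sgn}(Y_s)Z_s\cdot{\rm d}B_s$ (whose quadratic variation has density $\varphi_x^2|Z_s|^2$), the local-time contribution $\varphi_x(s,A_s)\,{\rm d}L_s\geq 0$ is nonnegative since $\varphi_x>0$, and the remaining finite-variation part has density
\[
\varphi_s(s,A_s)+\varphi_x(s,A_s)\bigl(f_s-{\rm sgn}(Y_s)g(s,Y_s,Z_s)\bigr)+\tfrac12\varphi_{xx}(s,A_s)|Z_s|^2.
\]
Invoking (EX3) gives $f_s-{\rm sgn}(Y_s)g(s,Y_s,Z_s)\geq -h(s,|Y_s|,|Z_s|)$, and since $A_s\geq|Y_s|$ while $h(s,\cdot,\bar x)$ is nondecreasing, $h(s,|Y_s|,|Z_s|)\leq h(s,A_s,|Z_s|)$. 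The test-function inequality \eqref{eq:2.1} evaluated at $(s,A_s,|Z_s|)$ then forces this density to be nonnegative, so $M$ is a local submartingale with nondecreasing drift and local-time parts. For bounded solutions this is a genuine submartingale (the stochastic integral is a true martingale, as $\varphi_x$ is continuous hence bounded on the relevant range and $Z\in\mcal^2$), which yields \eqref{eq:2.3}; and since $M\geq 0$ with $M_t\leq\E[M_T\mid\F_t]$ dominated by the uniformly integrable martingale generated by $M_T\in L^1$ (by \eqref{eq:2.2}), the family $\{M_\tau\}_{\tau\in\Sigma_T}$ is uniformly integrable, i.e. $M$ is of class (D). This already delivers the last two sentences of the theorem for any solution with adequate integrability.

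For existence I would approximate by problems with bounded data. Set $\xi_n:=(\xi\wedge n)\vee(-n)$ and truncate the state variable, $g_n(\omega,t,y,z):=g(\omega,t,(y\wedge n)\vee(-n),z)$, so that by (EX2) the new generator satisfies $|g_n|\leq H(\cdot,n)+\Gamma(\cdot,n)|z|^2$ with $\int_0^T H(\cdot,n)\,{\rm d}t+\sup_t\Gamma(\cdot,n)$ finite $\ps$; a further localization in $\omega$ of the random coefficients $H(\cdot,n),\Gamma(\cdot,n)$ reduces matters to a quadratic generator with bounded coefficients, continuous by (EX1), and bounded terminal value, for which a Kobylanski-type existence theorem supplies a solution $(Y^n,Z^n)\in\s^\infty\times\mcal^2$. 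Applying the a priori estimate of the first step to each $(Y^n,Z^n)$ with the \emph{same} test function $\varphi$ gives $\E[\varphi(t,|Y^n_t|+\int_0^t f_s\,{\rm d}s)]\leq\E[\varphi(T,|\xi|+\int_0^T f_s\,{\rm d}s)]$ uniformly in $n$ (using $|\xi_n|\leq|\xi|$ and the monotonicity of $\varphi$ in $x$). This uniform bound controls $(Y^n)$ in the Orlicz-type space attached to $\varphi$ and provides the compactness needed to extract limits, and is the mechanism by which the state truncation will disappear in the limit.

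I would then pass to the limit, extracting (along a subsequence) a limit $Y$ of $(Y^n)$ and a limit $Z$ of $(Z^n)$, verify that $(Y,Z)$ solves BSDE$(\xi,g)$, and conclude that it inherits the class-(D) submartingale property and \eqref{eq:2.3} from the first step (now legitimate, since the a priori bounds furnish the required integrability). The hard part, as always for quadratic generators, is the stability of the nonlinear term: one must upgrade weak convergence of $Z^n$ to $\as$ (equivalently $\mcal^2_{\rm loc}$) convergence in order to pass to the limit in $\int_t^T g(s,Y^n_s,Z^n_s)\,{\rm d}s$, because the $|z|^2$-growth is not stable under mere weak convergence. I expect this to be the main obstacle and would handle it by localizing at stopping times of the form $\tau_N:=\inf\{t:|Y^n_t|\vee\int_0^t|Z^n_s|^2\,{\rm d}s\geq N\}$, on which the generator is controlled, combining the uniform integrability furnished by the class-(D) property with a standard argument producing $\as$-convergent subsequences of $Z^n$; letting $N\to\infty$ and discarding the tails via the uniform estimate then gives convergence of the generator integrals and identifies $(Y,Z)$ as a solution, completing the proof.
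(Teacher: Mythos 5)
Your first half --- the It\^o--Tanaka computation, the verification that \eqref{eq:2.1} makes $\varphi(t,|Y_t|+\int_0^t f_s{\rm d}s)$ a local submartingale, and the deduction of \eqref{eq:2.3} and the class-(D) property from $M_t\leq\E[M_T|\F_t]$ --- coincides with Step~1 of the paper's proof and is correct. The existence half, however, has a structural gap at precisely the point you flag as "the hard part". The paper truncates the \emph{values} of the generator, $g^{n,p}:=g^+\wedge n-g^-\wedge p$ (and $\xi^{n,p}:=\xi^+\wedge n-\xi^-\wedge p$), for two reasons: (a) the comparison theorem then makes $Y^{n,p}$ nondecreasing in $n$ and nonincreasing in $p$, and (b) (EX3) holds for $g^{n,p}$ with $f\wedge(n\vee p)$ in place of $f$. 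Point (a) is essential: the monotone convergence of $Y^{n,p}$ is exactly the hypothesis that the stability theorem for bounded solutions of quadratic BSDEs (Kobylanski; Proposition~3.1 of \cite{LuoFan2018SD}) needs in order to upgrade to strong $\mcal^2$-convergence of $Z^{n,p}$ on each stochastic interval $[0,\sigma_{m,l}]$. Your truncation $g_n(t,y,z)=g(t,(y\wedge n)\vee(-n),z)$ destroys this: there is no ordering between $g_n$ and $g_{n+1}$, so $Y^n$ is not monotone, and the "standard argument producing $\as$-convergent subsequences of $Z^n$" that you invoke at the decisive moment does not exist for generators with $|z|^2$-growth under mere weak compactness plus uniform integrability of $Y^n$ --- circumventing that obstruction is the whole purpose of the monotone approximation scheme. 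As written, the passage to the limit in $\int_t^T g(s,Y^n_s,Z^n_s)\,{\rm d}s$ is unproved.

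A secondary problem: your a priori estimate for the approximating solutions is not justified as stated. Step~1 requires $|\xi|+\int_0^T f_s\,{\rm d}s\in L^\infty$ so that $\bar Y$ is bounded and $\int\varphi_x(s,\bar Y_s)\bar Z_s\cdot{\rm d}B_s$ is a true martingale. Truncating the state variable leaves (EX3) for $g_n$ with the original, possibly unbounded, $f$; hence $\varphi_x\bigl(s,|Y^n_s|+\int_0^s f_r\,{\rm d}r\bigr)$ need not be bounded and your claim that the martingale part is a true martingale fails. The paper's value-truncation automatically replaces $f$ by $f\wedge(n\vee p)$ and avoids this. Finally, "localization in $\omega$ of the random coefficients" is not a meaningful operation; the correct localization is in time through the stopping times $\tau_m$ and $\sigma_{m,l}$, after which one must solve the equation on each stochastic interval and verify the consistency relations that allow the pieces $(Y_{m,l},Z_{m,l})$ to be glued into a single solution --- a step your outline omits entirely.
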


\begin{proof} The proof consists of the following two steps.\vspace{0.2cm}

{\bf Step 1.} We first prove the inequality~\eqref{eq:2.3}  for  a solution
$$(Y_t,Z_t)_{t\in\T}\in \s^\infty(\T;\R)\times \mcal^2(\T;\R^d)$$
  of BSDE$(\xi,g)$ if further   $|\xi|+\int_0^T f_s{\rm d}s\in L^\infty$. Define
$$
\bar Y_t:=|Y_t|+\int_0^t f_s {\rm d}s\ \ \ \
{\rm and}\ \ \ \ \bar Z_t:={\rm sgn}(Y_t)Z_t,\ \ \ \ t\in \T.
\vspace{0.1cm}
$$
We have  from It\^{o}-Tanaka's formula that\vspace{0.1cm}
$$
\bar Y_t=\bar Y_T+\int_t^T \left({\rm sgn}(Y_s)g(s,Y_s,Z_s)-f_s\right){\rm d}s-\int_t^T \bar Z_s \cdot {\rm d}B_s-\int_t^T {\rm d}L_s, \ \ \ t\in\T,
$$
where $L_\cdot$ is the local time of $Y$ at $0$. Applying It\^{o}'s formula to the process $\varphi(s, \bar Y_s)$ and using assumption (EX3), we have that for each $s\in\T$,
$$
\begin{array}{lll}
\Dis {\rm d}\varphi(s,\bar Y_s)
&=&\Dis \varphi_x(s,\bar Y_s)
\left(-{\rm sgn}(Y_s)g(s,Y_s,Z_s)+f_s\right){\rm d}s+\varphi_x(s,\bar Y_s)\bar Z_s\cdot  {\rm d}B_s\vspace{0.1cm}\\
&&\Dis +\varphi_x(s,\bar Y_s){\rm d}L_s+{1\over 2}\varphi_{xx}(s,\bar Y_s)|\bar Z_s|^2{\rm d}s+\varphi_s(s,\bar Y_s){\rm d}s\vspace{0.2cm}\\
&\geq &\Dis \left[-\varphi_x(s,\bar Y_s)h(s,|Y_s|,|Z_s|)+{1\over 2}\varphi_{xx}(s,\bar Y_s)|Z_s|^2+\varphi_s(s,\bar Y_s)\right]{\rm d}s\\[3mm]
&&+\varphi_x(s,\bar Y_s)\bar Z_s\cdot  {\rm d}B_s.
\end{array}
$$
Since $|Y_s|\leq \bar Y_s$ and  $h(t,\cdot,\bar x)$ is nondecreasing, we see from~\eqref{eq:2.1}  that
$$
{\rm d}\varphi(s,\bar Y_s)\geq \varphi_x(s,\bar Y_s)\bar Z_s\cdot  {\rm d}B_s,\ \ s\in \T,
$$
which yields that
$$
\varphi(T,\bar Y_T)-\varphi(t,\bar Y_t)\geq \int_t^T \varphi_x(s,\bar Y_s)\bar Z_s\cdot  {\rm d}B_s,\ \ t\in \T.
$$
Since $|\xi|+\int_0^T f_s{\rm d}s\in L^\infty$ and $(Y_t,Z_t)_{t\in\T}\in \s^\infty(\T;\R)\times \mcal^2(\T;\R^d)$,  taking the  expectation conditioned on $\F_t$ on both sides of the last inequality,  we have~\eqref{eq:2.3}.\vspace{0.2cm}

{\bf Step 2.} Based on Step 1, we use the localization procedure of Briand and Hu~\cite{BriandHu2006PTRF} to construct the desired solution. For each $n,p\geq 1$ and $(\omega,t,y,z)\in \Omega\times\T\times\R\times\R^d$, denote
\begin{equation}\label{eq:2.4}
\xi^{n,p}:=\xi^+\wedge n-\xi^-\wedge p\ ~{\rm and}\ ~ g^{n,p}(\omega,t,y,z):=g^+(\omega,t,y,z)\wedge n-g^-(\omega,t,y,z)\wedge p.
\end{equation}
It is clear that $|\xi^{n,p}|\leq |\xi|\wedge (n\vee p)$ and $|g^{n,p}|\leq |g|\wedge (n\vee p)$ for each $(y,z)\in \R\times \R^d$. It can also be verified that the generator $g^{n,p}$ satisfies assumptions (EX1)-(EX3) with $f_\cdot\wedge (n\vee p)$ instead of $f_\cdot$ Then, according to \cite{Kobylanski2000AP}, the following BSDE$(\xi^{n,p},g^{n,p})$ has a maximal bounded solution $(Y^{n,p}_t,Z^{n,p}_t)_{t\in\T}$ in  $\s^\infty(\T;\R)\times\mcal^2(\T;\R^d)$:
\begin{equation}\label{eq:2.5}
  Y^{n,p}_t=\xi^{n,p}+\int_t^T g^{n,p}(s,Y^{n,p}_s,Z^{n,p}_s){\rm d}s-\int_t^T Z^{n,p}_s \cdot {\rm d}B_s, \ \ t\in\T.
\end{equation}
The comparison theorem shows that $(Y^{n,p}_t)_{t\in \T}$ is nondecreasing in $n$ and non-increasing in $p$. Furthermore,  we know from Step 1 that for each $n,p\geq 1$,
\begin{equation}\label{eq:2.6}
\begin{array}{l}
\Dis \varphi\left(t, |Y_t^{n,p}|+\int_0^t \left[f_s\wedge (n\vee p)\right]{\rm d}s\right) \\[3mm]
\ \ \leq  \Dis \E\left[\left.\varphi\left(T, |\xi^{n,p}|+\int_0^T \left[f_s\wedge (n\vee p)\right]{\rm d}s\right)\right|\F_t\right]\vspace{0.2cm}\\
\ \ \leq \Dis \E\left[\left.\varphi\left(T, |\xi|+\int_0^T f_s{\rm d}s\right)\right|\F_t\right],\ \ t\in\T.\vspace{0.1cm}
\end{array}
\end{equation}
Now, for each pair of integers $m,l\geq 1$, we define the following stopping times:
\[
 \tau_m:= T\wedge\inf\left\{t\in \T: \E\left[\left.\varphi\left(T, |\xi|+\int_0^T f_s{\rm d}s\right)\right|\F_t\right] \geq \varphi(t,m)\right\}
\]
and
\[
\sigma_{m,l}:=\tau_m\wedge\inf\bigg\{t\in \T: \int_0^t H(s, m){\rm d}s+\sup\limits_{s\in [0,t]}\Gamma(s,m) \geq l\bigg\}, \vspace{0.2cm}
\]
with the convention that $\inf\emptyset=+\infty$. Then the pair
$$(Y^{n,p}_{m,l}(t), Z^{n,p}_{m,l}(t))_{t\in \T}:=(Y^{n,p}_{t\wedge\sigma_{m,l}}, Z^{n,p}_t{\bf 1}_{t\leq\sigma_{m,l}})_{t\in \T}$$
 is a solution in the space of processes $\s^\infty(\T;\R)\times\mcal^2(\T;\R^d)$ to the following BSDE:
\[
Y^{n,p}_{m,l}(t)=Y^{n,p}_{\sigma_{m,l}}+\int_t^T{{\bf 1}_{s\leq \sigma_{m,l}}g^{n,p}(s, Y^{n,p}_{m,l}(s), Z^{n,p}_{m,l}(s)){\rm d}s}-\int_t^T Z^{n,p}_{m,l}(s) \cdot {\rm d}B_s,\ \ t\in\T.
\]
Observe that for each fixed $m,l\geq 1$, $(Y^{n,p}_{m,l}(t))_{t\in \T}$ is nondecreasing in $n$ and non-increasing in $p$, and that $\as$, $(g^{n,p})_{n,p}$ converges locally uniformly in $(y,z)$ to $g$ as $n,p\To \infty$. Since $\varphi(t,\cdot)$ is nondecreasing for each $t\in\T$, by \eqref{eq:2.6} together with the definitions of $\tau_m$ and $\sigma_{m,l}$, we have for $\as$,
$$
\sup\limits_{n,p\geq 1}|Y^{n,p}_{m,l}(t)|\leq m.
$$
Furthermore, since $|g^{n,p}|\leq |g|$ and $g$ satisfies assumption (EX2), we know that $\ass$,
\[
 \RE (y, z)\in [-m, m]\times\R^d,\ \ \
 \sup\limits_{n,p\geq 1}\left({\bf 1}_{s\leq \sigma_{m,l}}\left|g^{n,p}(s, y, z)\right|\right)\leq {\bf 1}_{s\leq \sigma_{m,l}}H(s, m)+ l|z|^2
\]
with $\int_0^T {\bf 1}_{s\leq \sigma_{m,l}} H(s, m){\rm d}s\leq l$. Thus,  for each fixed $m,l\geq 1$, we can apply the stability result for the bounded solutions of BSDEs (see for example Proposition 3.1 in \cite{LuoFan2018SD}). Setting $Y_{m,l}(t):=\inf_p\sup_n Y^{n,p}_{t\wedge\sigma_{m,l}}$, then $(Y_{m,l}(t))_{t\in\T}$ is continuous and the sequence $(Z^{n,p}_{t}{\bf 1}_{t\le \sigma_{m,l}})_{t\in [0,T]}$ converges to  $(Z_{m,l}(t))_{t\in [0,T]}$   strongly  in $\mcal^2(\T;\R^d)$ as $n,p\To\infty$ such that for $t\in \T$,
$$
Y_{m,l}(t)=\inf_{p\geq 1}\sup_{n\geq 1} Y^{n,p}_{\sigma_{m,l}}+\int_t^T{{\bf 1}_{s\le \sigma_{m,l}}g(s,Y_{m,l}(s),Z_{m,l}(s)){\rm d}s}-\int_t^T{Z_{m,l}(s)\cdot {\rm d}B_s}.
$$
Finally, in view of the last equation and  the stability of stopping times $\tau_m$ and $\sigma_{m,l}$, since  we have $\as$, for each $m,l\geq 1$,
\[
Y_{m+1,l+1}(t\wedge \sigma_{m,l})=Y_{m,l+1}(t\wedge \sigma_{m,l})=Y_{m,l}(t\wedge \sigma_{m,l})=\inf_{p\geq1}\sup_{n\geq 1} Y^{n,p}_{t\wedge\sigma_{m,l}}
\]
and\vspace{-0.1cm}
\[
Z_{m+1,l+1}{\bf 1}_{t\leq \sigma_{m,l}}=Z_{m,l+1}{\bf 1}_{t\leq \sigma_{m,l}}=Z_{m,l}{\bf 1}_{t\leq \sigma_{m,l}}=\lim_{n,p\To\infty}Z^{n,p}_{t}{\bf 1}_{t\le \sigma_{m,l}},\vspace{0.1cm}
\]
we see that $(Y_t,Z_t)_{t\in\T}$ is an adapted solution of BSDE$(\xi,g)$, where
$$
Y_t:=\inf_p\sup_n Y^{n,p}_{t} \ \ {\rm and}\ \ Z_t:=\sum_{m=1}^{+\infty}\left( \sum_{l=1}^{+\infty} Z_{m,l}(t){\bf 1}_{t\in [\sigma_{m,l-1},\sigma_{m,l})}\right){\bf 1}_{t\in [\tau_{m-1},\tau_m)}$$
for $ t\in\T,$
with $\tau_0:=0$ and $\sigma_{m,0}:=0$ for each $m\geq 1$. And, \eqref{eq:2.3} follows from \eqref{eq:2.6} by sending $n,p\to \infty$.
Moreover, according to \eqref{eq:2.5}, in a way similar to Step 1,  we also verify that for each $n,p\geq 1$ and $0\leq t\leq r\leq T$,
$$
\varphi\left(t, |Y_t^{n,p}|+\int_0^t \left[f_s\wedge (n\vee p)\right]{\rm d}s\right) \leq \E\left[\left.\varphi\left(r, |Y_r^{n,p}|+\int_0^r \left[f_s\wedge (n\vee p)\right]{\rm d}s\right)\right|\F_t\right].
$$
Thus, in view of \eqref{eq:2.6}, setting $n, p\to \infty$ and using Lebesgue's dominated convergence theorem in the last inequality,  we  see that the process $(\varphi(t, |Y_t|+\int_0^t f_s{\rm d}s))_{t\in \T}$ is indeed a sub-martingale. The proof is then complete.
\end{proof}

As applications of \cref{thm:2.2}, we shall prove Theorems \ref{thm:2.3} and \ref{thm:2.6} below, where the function $h(\cdot,\cdot,\cdot)$ in (EX3) takes the following form:
\begin{equation}\label{eq:2.7}
h(t,x,\bar x):=\beta x\left(\ln(e+x)\right)^\delta+\gamma \bar x^\alpha \left(\ln(e+\bar x)\right)^\lambda,\ \ (t,x,\bar x)\in \T\times \R_+\times\R_+.
\end{equation}
Both theorems can be compared to existing existence results  (for example, see \cite{Kobylanski2000AP,BriandHu2006PTRF,HuTang2018ECP,FanHu2019ECP,
FanHu2021SPA,FanHuTang2023SCL,FanHuTang2023SPA}) on adapted solutions of one-dimensional BSDEs. \vspace{0.2cm}

\subsection{The logarithmic quasi-linear growth case\vspace{0.2cm}}

Let us first consider the case that the generator $g$ has a logarithmic quasi-linear growth in the unknown variables $(y,z)$, i.e., the case of $\alpha=1$ in \eqref{eq:2.7}.

\begin{thm}\label{thm:2.3}
Assume that $\xi$ is an $\F_T$-measurable random variable and the generator $g$ satisfies (EX1)-(EX3) with $h(\cdot,\cdot,\cdot)$ being defined in \eqref{eq:2.7} for $\alpha=1$. Then, the following assertions hold.

(i) Let $\delta=0$ and $\lambda\in (-\infty,-{1\over 2})$. If $\xi+\int_0^T f_s{\rm d}s\in L^1$, then BSDE$(\xi,g)$ admits a solution $(Y_t,Z_t)_{t\in \T}$ such that the process $(Y_t)_{t\in\T}$ is of class (D);

(ii) Let $\delta=0$ and $\lambda=-{1\over 2}$. If $\xi+\int_0^T f_s{\rm d}s\in L(\ln L)^p$ for some $p>0$, then BSDE$(\xi,g)$ admits a solution $(Y_t,Z_t)_{t\in \T}$ such that the process
$$|Y_t|(\ln (e+|Y_t|))^p, \quad t\in \T$$
is of class (D);

(iii) If $p:=\delta\vee (\lambda+{1\over 2})\vee (2\lambda)\in (0,+\infty)$ and $\xi+\int_0^T f_s{\rm d}s\in \cap_{\mu>0}L\exp[\mu(\ln L)^p]$, then BSDE$(\xi,g)$ has a solution $(Y_t,Z_t)_{t\in \T}$ such that the process
$$|Y_t|\exp \left(\mu(\ln (e+|Y_t|))^p\right), \quad t\in\T$$
 is of class (D) for each $\mu>0$;

(iv) Let $\delta=0$ and $\lambda=0$. If $\xi+\int_0^T f_s{\rm d}s\in L^p$ for some $p>1$, then BSDE$(\xi,g)$ admits a solution $(Y_t,Z_t)_{t\in \T}$ such that the process $(|Y_t|^p)_{t\in\T}$
 is of class (D).\vspace{0.2cm}
\end{thm}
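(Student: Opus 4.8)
The plan is to obtain all four assertions as direct applications of \cref{thm:2.2}. For each item I would exhibit one explicit test function $\varphi(\cdot,\cdot)\in{\bf S}$ for the particular $h$ of \eqref{eq:2.7} with $\alpha=1$, check that the integrability condition \eqref{eq:2.2} reduces exactly to the stated hypothesis on $\xi+\int_0^T f_s\,\mathrm ds$, and then read off the announced class (D) property from the conclusion of \cref{thm:2.2}. The natural choice is a time-parametrised profile $\varphi(s,x)=\theta_{\nu(s)}(x)$ with a nondecreasing time weight $\nu$ (so that $\varphi_s\ge 0$) and a smooth, strictly increasing and strictly convex profile whose growth at infinity matches the target: $\theta(x)\sim x$ in (i), $\theta(x)\sim x(\ln(e+x))^p$ in (ii), $\theta(x)\sim x\exp(\mu(\ln(e+x))^p)$ in (iii), and $\theta(x)\sim x^p$ in (iv); in the power cases this specialises to the separated form $\rho(s)\theta(x)$ with $\rho$ exponential. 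Since $\varphi(T,\cdot)$ is then (up to a standard offset near $x=0$ that secures strict convexity and positivity) the Young function attached to the corresponding Orlicz heart, condition \eqref{eq:2.2} is precisely the integrability assumption imposed in each item, and the matching of the data spaces $L^1$, $L(\ln L)^p$, $\cap_{\mu>0}L\exp[\mu(\ln L)^p]$, $L^p$ is immediate.

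The heart of the matter is verifying the differential inequality \eqref{eq:2.1}, which for $\alpha=1$ reads
\begin{equation*}
\varphi_s(s,x)-\beta\,x(\ln(e+x))^\delta\varphi_x(s,x)-\gamma\,\bar x(\ln(e+\bar x))^\lambda\varphi_x(s,x)+\tfrac12\bar x^2\varphi_{xx}(s,x)\ge 0
\end{equation*}
for all $(s,x,\bar x)\in\T\times\R_+\times\R_+$. I would first absorb the $z$-term by a completion-of-square step, bounding $\gamma\varphi_x\bar x(\ln(e+\bar x))^\lambda$ from above by $\tfrac12\bar x^2\varphi_{xx}$ plus a remainder depending only on $(s,x)$; when $\lambda=0$ (case (iv)) this is the elementary estimate with remainder $\tfrac{\gamma^2\varphi_x^2}{2\varphi_{xx}}$, but when $\lambda\ne 0$ the logarithmic factor obstructs the naive minimisation and forces the use of the key inequality of the Appendix, \cref{pro:2.5}, which supplies the sharp remainder together with the admissible exponent range. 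The super-linear $y$-term $\beta x(\ln(e+x))^\delta\varphi_x$ is not controlled by convexity but is absorbed by the growth of the time weight: after isolating the remainder, \eqref{eq:2.1} collapses to a scalar differential inequality in $s$ (schematically $\nu'$, or $\rho'/\rho$, bounded below by a function of the current parameter), solvable on the finite interval $\T$ by giving the time weight a sufficiently large slope.

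The main obstacle is therefore the sharp control of the mixed term $\gamma\varphi_x\bar x(\ln(e+\bar x))^\lambda$ against $\tfrac12\bar x^2\varphi_{xx}$ in the logarithmic regimes (i)--(iii); this is exactly where \cref{pro:2.5} is indispensable, where the threshold $p=\delta\vee(\lambda+\tfrac12)\vee(2\lambda)$ is produced, and where the critical case (ii) ($\lambda=-\tfrac12$) demands the profile $\theta(x)\sim x(\ln(e+x))^p$ rather than a pure power. For the class (D) conclusions I would, in (iii), exploit the freedom in $\xi+\int_0^T f_s\,\mathrm ds\in\cap_{\mu>0}L\exp[\mu(\ln L)^p]$: fixing an arbitrary $\mu>0$, I choose the time weight nondecreasing with $\nu(s)\ge\mu$ on all of $\T$ (affordable because the data lies in every such Orlicz heart, so \eqref{eq:2.2} holds for $\nu(T)>\mu$), whence $|Y_t|\exp(\mu(\ln(e+|Y_t|))^p)\le\varphi(t,|Y_t|+\int_0^t f_s\,\mathrm ds)$; the target is then of class (D) as a nonnegative process dominated by one of class (D) furnished by \cref{thm:2.2}. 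Cases (i), (ii), (iv) follow the same template, reading the stated class (D) statements off the lower bound $\theta(|Y_t|)\le\varphi(t,|Y_t|+\int_0^t f_s\,\mathrm ds)$.
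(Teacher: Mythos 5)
Your proposal is correct and follows essentially the same route as the paper: apply \cref{thm:2.2} with an explicit test function in ${\bf S}$ for each regime, use \cref{pro:2.5} (with $p=2$) to complete the square against the logarithmic mixed term in $\bar x$, absorb the one-sided $y$-growth into the slope of the exponential time weight, match \eqref{eq:2.2} with the stated Orlicz-type integrability, and read off the class (D) assertion by dominating the target process by $\varphi(t,|Y_t|+\int_0^t f_s\,{\rm d}s)$. The only detail you leave implicit is the explicit strictly convex, asymptotically linear profile in case (i), which in the paper is $(k+x)\bigl(1-(\ln(k+x))^{1+2\lambda}\bigr)\exp(cs)$ — the correction is a global logarithmic one tied to $1+2\lambda<0$ rather than an offset near $x=0$, but this is exactly the kind of profile your plan calls for.
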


\begin{rmk}\label{rmk:2.4}
When the generator $g$ grows faster in both unknown variables $(y,z)$, a stronger integrability  on $\xi+\int_0^T f_s{\rm d}s$ is required for the existence of solution of BSDE$(\xi,g)$. Under a slightly stronger growth condition than (EX2), Assertions  (i), (iii) with $\lambda>0$, and (iv) of \cref{thm:2.3} were given in \cite{FanHuTang2023SCL,FanHuTang2023SPA,Fan2016SPL}, respectively. However, for $\lambda\in [-\frac{1}{2},0)$, Assertions (ii) and (iii) of \cref{thm:2.3} seem to be new.
\end{rmk}

\begin{ex}\label{ex:2.1}
Consider the following simple BSDE:
\begin{equation}\label{eq:2.7*}
Y_t=\xi+\int_t^T (f_s+\beta |Y_s|+\gamma |Z_s|){\rm d}s-\int_t^T Z_s\cdot {\rm d}B_s,\ \ t\in\T,
\end{equation}
where $\xi\geq 0$ and $f_\cdot\in L^1(0,T)$. It is the special case of $\delta=0$, $\lambda=0$ and $\alpha=1$ in \eqref{eq:2.7}. Theorem 2.1 of \cite{HuTang2018ECP} states that BSDE \eqref{eq:2.7*} admits a solution $(Y_t,Z_t)_{t\in \T}$ such that $Y\geq 0$ if and only if there exists a locally bounded process $\bar Y$ such that
$$
\esup\limits_{q\in \acal\ } \left\{\E_q\left[\left.e^{\beta(T-t)}|\xi|\right|\F_t\right]\right\}
+\int_t^T e^{\beta(s-t)} f_s {\rm d}s\leq \bar Y_t,
$$
where $\acal$ is the set of $(\F_t)$-progressively measurable $\R^d$-valued process $(q_t)_{t\in \T}$ such that $|q|\leq \gamma$, and
$$
\frac{{\rm d}\Q^q}{{\rm d}\p}:=M^q_T
$$
with
$$
M^q_t:=\exp\left\{\int_0^t q_s\cdot  {\rm d}B_s-{1\over 2}\int_0^t |q_s|^2{\rm d}s  \right\},\ \ t\in \T\vspace{0.2cm}
$$
and $\E_q$ is the expectation operator with respect to $\Q^q$. In particular, if BSDE \eqref{eq:2.7*} admits a solution $(Y_t,Z_t)_{t\in \T}$ such that $Y\geq 0$, then $\xi e^{\gamma |B_T|}\in L^1$.

The following example is taken from Example 2.3 of \cite{HuTang2018ECP}. Set $d=1$, $T=1$, $\gamma=1$ and
the terminal variable
$$\xi:=e^{{1\over 2}(|B_1|-1)^2}-1.$$
Then, BSDE \eqref{eq:2.7*} has no solution $(Y_t,Z_t)_{t\in \T}$ such that $Y\geq 0$, as $\xi e^{|B_1|}\notin L^1$:
$$
\E\left[(\xi+1) e^{|B_1|}\right]=\frac{1}{\sqrt{2\pi}}\int_{-\infty}^{+\infty} e^{{1\over 2}(|x|-1)^2} e^{|x|}e^{-{1\over 2}x^2}{\rm d}x=+\infty.
$$
Whereas it can be directly checked that this $\xi$ belongs to the space
$$\cap_{0<\mu<\sqrt{2}} L\exp[\mu(\ln L)^p]$$ and then $\cap_{q>0} L(\ln L)^q$, but does not belong to $L\exp[\sqrt{2}(\ln L)^p]$, where $p={1\over 2}$ is defined in (iii) of \cref{thm:2.3}.

Furthermore, \cite{HuTang2018ECP,FanHu2019ECP} show that for a linearly growing BSDE$(\xi,g)$ to have a solution, an $L\exp(\mu\sqrt{\ln L})$-integrability  of $\xi$ is sufficient for $\mu=\gamma\sqrt{2T}$, but  not for any $\mu\in (0, \gamma\sqrt{2T}\,)$, which can not follow from (iii) of \cref{thm:2.3}.
\end{ex}

The following remark further illustrates that the $L\exp(\gamma\sqrt{2T\ln L})$-integrability condition is very close to the weakest one on the terminal value $\xi$ for the existence of an adapted solution of BSDE \eqref{eq:2.7*}, which seems to be new. For this, for each integer $n\geq 1$, by induction we define the following functions
$$
\ln^{(1)}(x):=\ln x,\ \ x\geq e^{(1)}\ \ {\rm and}\ \  \ln^{(n)}(x):=\ln^{(n-1)}(\ln^{(1)}(x))=\ln^{(1)}(\ln^{(n-1)}(x))$$
for $ x\geq e^{(n)},$
where
$$
e^{(1)}:=e\ \ {\rm and}\ \ e^{(n)}:=e^{e^{(n-1)}}.
$$
And, define the function
$$
\Ln(x):=(e^{(n)}+x)\cdot \prod\limits_{i=1}^{n}\ln^{(i)}( e^{(n)}+x), \quad x\ge 0.
$$

\begin{rmk}\label{rmk:2.5*}
By an example, we verify that for any positive integral $n\geq 1$, the integrability
$$
\frac{L\exp\left(\gamma\sqrt{2T\ln L}\right)}{\ln^{(n)}( e^{(n)}+L)}
$$
on the terminal value $\xi$ is not sufficient for the existence of  an adapted solution of BSDE \eqref{eq:2.7*}. More specifically, we set $d=1$, $T=1$, $\gamma=1$ and
$$\xi:=\frac{e^{{1\over 2}(|B_1|-1)^2}}{\Ln(|B_1|)}-1.$$
It is easily checked that $\xi e^{|B_1|}\notin L^1$ due to
$$\begin{array}{lll}
\E\left[(\xi+1) e^{|B_1|}\right]&=&\Dis \frac{1}{\sqrt{2\pi}}\int_{-\infty}^{+\infty} \frac{e^{{1\over 2}(|x|-1)^2}}{\Ln(|x|)} e^{|x|}e^{-{1\over 2}|x|^2}{\rm d}x\\[3mm]
&=&\Dis \left. \frac{2e^{1\over 2}}{\sqrt{2\pi}}\ln\left(\ln^{(n)}(e^{(n)}+|x|)\right)\right|_0^{+\infty}=+\infty.
\end{array}
$$
Consequently, by Theorem 2.1 of \cite{HuTang2018ECP} we conclude that BSDE \eqref{eq:2.7*} has no solution $(Y_t,Z_t)_{t\in \T}$ such that $Y\geq 0$. Whereas, we also directly verify that this $\xi$ belongs to the space $$\frac{L\exp(\sqrt{2\ln L})}{\ln^{(n)}(e^{(n)}+L)},$$
which yields the desired result.

In fact, it is clear that there exists a constant $x_0>0$ such that when $|x|>x_0$,
$$
\frac{e^{{1\over 2}(|x|-1)^2}}{\Ln(|x|)}-1\geq |x|\ \ {\rm and\ then}\ \ \ln^{(n)}\left(e^{(n)}+\frac{e^{{1\over 2}(|x|-1)^2}}{\Ln(|x|)}-1\right)\geq \ln^{(n)}\left(e^{(n)}+|x|\right).
$$
Furthermore, by a simple computation we get
$$
\sqrt{2\ln (\xi+1)}=\sqrt{(|B_1|-1)^2-2\ln(\Ln(|B_1|))}.
$$
Finally, observing that
$$\begin{array}{l}
\lim\limits_{x\To \infty}
\left(\sqrt{(|x|-1)^2-2\ln(\Ln(|x|))}-(|x|-1)\right)\\[3mm]
\ \ =\Dis -\lim\limits_{x\To \infty} \frac{2\ln(\Ln(|x|))}{\sqrt{(|x|-1)^2-2\ln(\Ln(|x|))}+(|x|-1)}=0
\end{array}
$$
and
$$\begin{array}{lll}
\Dis \int_{|x|>x_0} \frac{1}
{\Ln(|x|)\ln^{(n)}\left(e^{(n)}+|x|\right)}
{\rm d}x&=& \Dis \left. \frac{-2}{\ln^{(n)}\left(e^{(n)}+|x|\right)}\right|_{x_0}^{+\infty}\\[3mm]
&=&\Dis \frac{2}{\ln^{(n)}\left(e^{(n)}+|x_0|\right)}<+\infty,
\end{array}
$$
we obtain that there exists a constant $C>0$ depending only on $x_0$ such that
$$
\begin{array}{l}
\Dis \E\left[\frac{(\xi+1)\exp\left(\sqrt{2\ln (\xi+1)}\right)}{\ln^{(n)}\left(e^{(n)}+\xi\right)}\right]\leq \Dis \E\left[\frac{(\xi+1)\exp\left(\sqrt{2\ln (\xi+1)}\right)}{\ln^{(n)}\left(e^{(n)}+|B_1|\right)}{\bf 1}_{|B_1|>x_0}\right]+C
\vspace{0.3cm}\\
\ \ =\Dis \frac{1}{\sqrt{2\pi}}\int_{|x|>x_0} \frac{e^{{1\over 2}(|x|-1)^2}}{\Ln(|x|)}\cdot \frac{e^{\sqrt{(|x|-1)^2-2\ln(\Ln(|x|))}}}{\ln^{(n)}\left(e^{(n)}+|x|\right)}
e^{-{1\over 2}|x|^2}{\rm d}x+C\vspace{0.3cm}\\
\ \ =\Dis \frac{1}{\sqrt{2\pi}}\int_{|x|>x_0} \frac{e^{\sqrt{(|x|-1)^2-2\ln(\Ln(|x|))}-(|x|-1)}}
{\Ln(|x|)\ln^{(n)}\left(e^{(n)}+|x|\right)}
e^{-{1\over 2}}{\rm d}x+C<+\infty.\vspace{0.2cm}
\end{array}
$$
\end{rmk}

To prove \cref{thm:2.3}, we introduce the following \cref{pro:2.5}, whose proof is given in Appendix.

\begin{pro}\label{pro:2.5}
For each $p>1$ and $\lambda\in \R$, there exists a sufficiently large positive constant $k_{\lambda,p}\geq e$ depending only on $(\lambda,p)$ such that for each $k\geq k_{\lambda,p}$,
\begin{equation}\label{eq:2.8}
2xy\left(\ln (k+y)\right)^\lambda\leq px^2\left(\ln (k+x)\right)^{2\lambda}+y^2,\ \ \RE x,y>0.
\end{equation}
In particular, when $p=1$, there is no constant $k$ such that \eqref{eq:2.8} holds true unless $\lambda=0$.
\end{pro}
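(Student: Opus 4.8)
The plan is to read the inequality as a logarithmically weighted Young's inequality and to split the open quadrant $\{x,y>0\}$ into a region where a weighted Young's inequality applies verbatim and a complementary region where a single square term on the right already dominates. First I would dispose of $\lambda=0$, where the claim is just $2xy\le x^2+y^2\le px^2+y^2$ and holds for every $k$, and reduce to $\lambda\neq 0$. The key bookkeeping device is the ratio $r:=\ln(k+y)/\ln(k+x)>0$, through which $(\ln(k+y))^\lambda=r^\lambda(\ln(k+x))^\lambda$, so that for every $\theta>0$ the plain weighted Young inequality (applied to $x(\ln(k+x))^\lambda$ and $y\,r^\lambda$) reads
\[
2xy(\ln(k+y))^\lambda\le \theta\,x^2(\ln(k+x))^{2\lambda}+\theta^{-1}y^2\,r^{2\lambda},\qquad \theta>0.
\]
On the region $\{r^{2\lambda}\le p\}$ I would simply take $\theta=p$: then $\theta^{-1}r^{2\lambda}=p^{-1}r^{2\lambda}\le 1$ and the right-hand side is at once bounded by $px^2(\ln(k+x))^{2\lambda}+y^2$. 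This step is immediate and is precisely where the slack $p>1$ is first spent.

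The remaining region is $\{r^{2\lambda}>p\}$. Setting $c:=p^{1/(2\lambda)}$, the hypotheses $p>1$ and $\lambda\neq0$ force $c\neq1$, and this region is exactly $\{k+y>(k+x)^{c}\}$ with $c>1$ when $\lambda>0$ (so that $y$ is far larger than $x$) and exactly $\{k+x>(k+y)^{1/c}\}$ with $c<1$ when $\lambda<0$ (so that $x$ is far larger than $y$). In the first case I would discard the $x$-term and prove $2x(\ln(k+y))^\lambda\le y$ directly: writing $w=k+y$, $v=k+x$, the constraint gives $x<v<w^{1/c}$, so it suffices to check $2w^{1/c}(\ln w)^\lambda\le w-k$ for all $w\ge k^{c}$. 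The case $\lambda<0$ is symmetric: one discards the $y^2$-term and reduces, after the same $w,v$ substitution, to $2w^{1/c}(\ln w)^\lambda\le$ a multiple of $v^{1-c}(\ln v)^{2\lambda}$, now exploiting $1-c>0$.

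The main obstacle is this complementary region. The delicate point is to verify the one-term domination uniformly all the way down to the curved boundary of the bad region, where the large variable is only moderately large and the logarithmic weights are not yet negligible. This is exactly where the constant $k_{\lambda,p}$ must be chosen and where $c\neq1$ (equivalently $p>1$) is indispensable, since $c\neq1$ produces a genuine polynomial gap (a factor $w^{1/c-1}=o(1)$ when $\lambda>0$, or $v^{1-c}\to\infty$ when $\lambda<0$) that overwhelms the slowly varying logarithmic factors once $k$ is large. I would carry this out by a short one-variable calculus argument: check that the relevant auxiliary function is increasing on $[k^{c},\infty)$ (resp. on $[k^{1/c},\infty)$) for $k$ large, and that it is nonnegative at the left endpoint because $k^{c-1}\to\infty$ (resp. $k^{(1-c)/c}\to\infty$) as $k\to\infty$; monotonicity then propagates positivity across the whole bad region.

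Finally, for the impossibility at $p=1$ I would test the inequality along the Young-equality curve $y=x(\ln(k+x))^\lambda$. On this curve the right-hand side equals $2x^2(\ln(k+x))^{2\lambda}$ while the left-hand side equals $2x^2(\ln(k+x))^\lambda(\ln(k+y))^\lambda$, so the inequality collapses to $(\ln(k+y))^\lambda\le(\ln(k+x))^\lambda$. As $x\to\infty$ one has $y>x$ when $\lambda>0$ and $y<x$ when $\lambda<0$, and in either case $\lambda\neq0$ makes $(\ln(k+y))^\lambda>(\ln(k+x))^\lambda$ for large $x$; hence the inequality is violated for every fixed $k$ unless $\lambda=0$, which is the asserted sharpness.
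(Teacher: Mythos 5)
Your proof is correct, and its skeleton coincides with the paper's: both dispose of $\lambda=0$ trivially, both split the quadrant along the curve where $p(\ln(k+x))^{2\lambda}=(\ln(k+y))^{2\lambda}$ (your region $\{r^{2\lambda}\le p\}$ is exactly the paper's $\{y\le y_0\}$ for $\lambda>0$, resp.\ $\{y\ge y_0\}$ for $\lambda<0$), both settle the good region by the same completing-the-square/weighted-Young algebra, and both refute the case $p=1$ by testing along the Young-equality curve (you use the explicit curve $y=x(\ln(k+x))^\lambda$, the paper the implicit one $y=x(\ln(k+y))^\lambda$; either works). Where you genuinely diverge is the bad region. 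The paper keeps the full expression $\bar f(y)=y^2-2xy(\ln(k+y))^\lambda+px^2(\ln(k+x))^{2\lambda}$ and runs a second-order analysis: it computes $\bar f''$, shows $\bar f''\ge 1$ (resp.\ $\ge 2$) there for $k$ large, checks the sign of $\bar f'(y_0)$, and propagates $\bar f(y)\ge\bar f(y_0)\ge 0$ by monotonicity; this requires differentiating $y(\ln(k+y))^\lambda$ twice and the explicit threshold conditions \eqref{eq:2.8+}. You instead discard one of the two positive terms on the right and prove the cruder one-term domination $2xy(\ln(k+y))^\lambda\le y^2$ (for $\lambda>0$) or $\le px^2(\ln(k+x))^{2\lambda}$ (for $\lambda<0$), reducing to a single-variable inequality in $w=k+y$ (resp.\ $v=k+x$) on a half-line, settled by checking the left endpoint and first-derivative monotonicity. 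Your route buys a more elementary verification (no second derivatives of the logarithmically weighted function, and the mechanism --- a polynomial gap $w^{1/c-1}$ or $v^{1-c}$ overwhelming slowly varying logarithms --- is made transparent), at the cost of throwing away a term the paper retains; the paper's route keeps a single function throughout and yields the explicit admissible constants $k_{\lambda,p}$. Both hinge identically on $p>1$ (i.e.\ $c\ne 1$), and the one-variable estimates you defer to "a short calculus argument" do go through exactly as you describe, so there is no gap.
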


\begin{proof}[Proof of \cref{thm:2.3}]
Since the generator $g$ satisfies (EX1)-(EX3) with $h(\cdot,\cdot,\cdot)$ being defined in \eqref{eq:2.7} for $\alpha=1$, a function $\varphi(\cdot,\cdot)\in {\bf S}$ is a test function for $g$ if for each $(s,x,\bar x)\in [0,T]\times \R_+\times\R_+$, it holds that
$$
-\varphi_x(s,x)\left(\beta x\left(\ln(e+x)\right)^\delta+\gamma \bar x\left(\ln(e+\bar x)\right)^\lambda\right)+{1\over 2}\varphi_{xx}(s,x)|\bar x|^2+\varphi_s(s,x)\geq 0.
$$
Here, we emphasis that without loss of generality, the constant $e$ in the last inequality can be replaced with any given constant $k\geq e$ due to the fact that $\lim\limits_{x\To \infty}\ln(k+x)/\ln(e+x)=1$. It follows from \cref{pro:2.5} with $p=2$ that there exists a sufficiently large positive constant $k_{\lambda}\geq e$ depending only on $\lambda$ such that for each $k\geq k_\lambda$ and $(s,x,\bar x)\in [0,T]\times \R_+\times\R_+$,
$$
\begin{array}{l}
\Dis -\gamma\varphi_x(s,x) \bar x\left(\ln(k+\bar x)\right)^\lambda+{1\over 2}\varphi_{xx}(s,x)|\bar x|^2\vspace{0.2cm}\\
\ \ =\Dis \frac{\varphi_{xx}(s,x)}{2}\left(-2\frac{\gamma \varphi_x(s,x)}{\varphi_{xx}(s,x)}\bar x\left(\ln(k+\bar x)\right)^\lambda+|\bar x|^2\right)\vspace{0.2cm}\\
\ \ \geq \Dis -\frac{\gamma^2 (\varphi_x(s,x))^2}{\varphi_{xx}(s,x)} \left(\ln\left(k+\frac{\gamma \varphi_x(s,x)}{\varphi_{xx}(s,x)} \right)\right)^{2\lambda}.\vspace{0.1cm}
\end{array}
$$
Thus, if a function $\varphi(\cdot,\cdot)\in {\bf S}$ satisfies that for some $k\geq k_\lambda\geq e$ and each $(s,x)\in [0,T]\times \R_+$,
\begin{equation}\label{eq:2.9}
\begin{array}{l}
-\beta \varphi_x(s,x)(k+x)\left(\ln(k+x)\right)^\delta -\frac{\gamma^2 (\varphi_x(s,x))^2}{\varphi_{xx}(s,x)} \left(\ln\left(k+\frac{\gamma \varphi_x(s,x)}{\varphi_{xx}(s,x)} \right)\right)^{2\lambda}\vspace{0.2cm}\\
\ \ \Dis \hspace{3cm}+\varphi_s(s,x)\geq 0,
\end{array}
\end{equation}
then it is a test function for the generator $g$.\vspace{0.2cm}

(i) Let $\delta=0$ and $\lambda\in (-\infty,-{1\over 2})$. By a similar computation as in \cite{FanHuTang2023SCL}, one can verify that for  sufficiently large  $k\geq k_\lambda$  and  $c\geq 2\beta-\frac{8\gamma^2}{1+2\lambda}$, the following function
$$
\varphi(s,x)=(k+x)\left(1-\left(\ln(k+x)\right)^{1+2\lambda}\right)\exp(c s),\ \ (s,x)\in \T\times\R_+
$$
satisfies the inequality \eqref{eq:2.9} with $\delta=0$ and $\lambda<-1/2$, and thus is a test function for the generator $g$. Since
$$
\lim\limits_{x\To +\infty}\frac{\varphi(s,x)}{x}=\exp(cs)\in [1,\exp(cT)], \quad s\in\T,
$$
we see  from \cref{thm:2.2} that  BSDE$(\xi,g)$ admits a solution $(Y_t,Z_t)_{t\in \T}$ such that the process $(Y_t)_{t\in \T}$ is of class (D) if $\xi+\int_0^T f_s{\rm d}s\in L^1$.\vspace{0.2cm}

(ii) Let $\delta=0$ and $\lambda=-{1\over 2}$. It is not very hard to verify that for  $p>0$, sufficiently large $k\geq k_\lambda$  and $c\geq 2\beta+\frac{4\gamma^2}{p}$, the following function
$$
\varphi(s,x)=(k+x)\left(\ln(k+x)\right)^p \exp(c s),\ \ (s,x)\in \T\times\R_+
$$
satisfies the inequality \eqref{eq:2.9} with $\delta=0$ and $\lambda=-1/2$, and thus is a test function for the generator $g$. Since
$$
\lim\limits_{x\To +\infty}\frac{\varphi(s,x)}{x\left(\ln(e+x)\right)^p }=\exp(cs)\in [1,\exp(cT)], \quad s\in\T,
$$
we see  from \cref{thm:2.2} that if $\xi+\int_0^T f_s{\rm d}s\in L(\ln L)^p$ for some $p>0$, then BSDE$(\xi,g)$ admits a solution $(Y_t,Z_t)_{t\in \T}$ such that the process
$$|Y_t|(\ln (e+|Y_t|))^p, \quad t\in \T$$ is of class (D). \vspace{0.2cm}

(iii) Let $p:=\delta\vee (\lambda+{1\over 2})\vee (2\lambda)\in (0,+\infty)$. By a similar computation as in \cite{FanHuTang2023SPA}, one can verify that for  sufficiently large $k\geq k_\lambda$, $c_1\geq 1$ and $c_2\geq (p+1)\beta-4^{\lambda^+}\gamma^2$, the following function
$$
\varphi(s,x)=(k+x)\exp\left(c_1\exp(c_2 s) \left(\ln(k+x)\right)^p\right),\ \ (s,x)\in \T\times\R_+
$$
satisfies the inequality \eqref{eq:2.9} with $p>0$, and thus is a test function for the generator $g$. Since
$$
\lim\limits_{x\To +\infty}\frac{\varphi(s,x)}{x\exp\left(c_1\exp(c_2 s) \left(\ln(e+x)\right)^p\right)}=1, \quad s\in\T,
$$
we see  from \cref{thm:2.2} that if $\xi+\int_0^T f_s{\rm d}s\in \cap_{\mu>0} L\exp[\mu(\ln L)^p]$, then BSDE$(\xi,g)$ admits a solution $(Y_t,Z_t)_{t\in \T}$ such that for each $\mu>0$, the process
$$|Y_t|\exp \left(\mu(\ln (e+|Y_t|))^p\right), \quad t\in\T$$
 is of class (D).\vspace{0.2cm}

(iv) Let $\delta=0$ and $\lambda=0$. It is easy to verify that for each $p>1$, $k\geq k_\lambda$ and $c\geq p\beta+\frac{p}{p-1}\gamma^2$, the following function
$$
\varphi(s,x)=(k+x)^p \exp(c s),\ \ (s,x)\in \T\times\R_+
$$
satisfies the inequality \eqref{eq:2.9} with $\delta=0$ and $\lambda=0$, and thus is a test function for the generator $g$. Since
$$
\lim\limits_{x\To +\infty}\frac{\varphi(s,x)}{x^p }=\exp(cs)\in [1,\exp(cT)], \quad s\in\T,
$$
we see  from \cref{thm:2.2} that if $\xi+\int_0^T f_s{\rm d}s\in L^p$ for some $p>1$, then BSDE$(\xi,g)$ admits a solution $(Y_t,Z_t)_{t\in \T}$ such that the process $(|Y_t|^p)_{t\in\T}$ is of class (D).\vspace{0.2cm}
\end{proof}

\subsection{The sub-quadratic/quadratic growth case\vspace{0.2cm}}

Let us further consider the case that the generator $g$ has a unilateral linear growth in the unknown variable $y$ and has a super-linear at most quadratic growth in the unknown variable $z$.

\begin{thm}\label{thm:2.6}
Assume that $\xi$ is an $\F_T$-measurable random variable and the generator $g$ satisfies assumptions (EX1)-(EX3) with $h(\cdot,\cdot,\cdot)$ being defined in \eqref{eq:2.7} for $\delta=0$, $\lambda=0$ and $\alpha\in (1,2]$. Let $\alpha^*$ be the conjugate of $\alpha$, i.e.,
$$
\frac{1}{\alpha}+\frac{1}{\alpha^*}=1.\vspace{0.1cm}
$$
If $\xi+\int_0^T f_s{\rm d}s\in \cap_{\mu>0}\exp(\mu L^{2\over \alpha^*})$, then BSDE$(\xi,g)$ admits a solution $(Y_t,Z_t)_{t\in \T}$ such that the process $(\exp(\mu |Y_t|^{2\over \alpha^*}))_{t\in \T}$ is of class (D) for each $\mu>0$. In particular, if $\xi+\int_0^T f_s{\rm d}s\in L^{\infty}$, then BSDE$(\xi,g)$ admits a solution $(Y_t,Z_t)_{t\in \T}$ such that $(Y_t)_{t\in\T}\in \s^{\infty}(\T;\R)$.
\end{thm}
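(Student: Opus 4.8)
The plan is to deduce \cref{thm:2.6} from the general existence result \cref{thm:2.2} by exhibiting an explicit test function. Since $g$ satisfies (EX3) with $h(t,x,\bar x)=\beta x+\gamma\bar x^\alpha$ (the case $\delta=\lambda=0$ of \eqref{eq:2.7}), a function $\varphi\in{\bf S}$ is a test function for $g$ exactly when
\[
-\varphi_x(s,x)\bigl(\beta x+\gamma\bar x^\alpha\bigr)+\tfrac12\varphi_{xx}(s,x)|\bar x|^2+\varphi_s(s,x)\geq 0
\]
for all $(s,x,\bar x)\in[0,T]\times\R_+\times\R_+$. My first step is to eliminate $\bar x$. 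For $\alpha\in(1,2)$ the map $\bar x\mapsto\tfrac12\varphi_{xx}\bar x^2-\gamma\varphi_x\bar x^\alpha$ has an interior minimum at $\bar x_*=(\alpha\gamma\varphi_x/\varphi_{xx})^{1/(2-\alpha)}$, and a direct computation (equivalently, Young's inequality with conjugate exponents $2/\alpha$ and $2/(2-\alpha)$) produces a lower bound of the form $-C_{\alpha,\gamma}\,\varphi_x^{2/(2-\alpha)}\varphi_{xx}^{-\alpha/(2-\alpha)}$. Hence it suffices to find $\varphi\in{\bf S}$ satisfying the reduced inequality
\[
-\beta x\,\varphi_x(s,x)-C_{\alpha,\gamma}\,\frac{\varphi_x(s,x)^{2/(2-\alpha)}}{\varphi_{xx}(s,x)^{\alpha/(2-\alpha)}}+\varphi_s(s,x)\geq 0 .
\]
The quadratic endpoint $\alpha=2$ is degenerate and must be handled separately: there the $\bar x$-dependent part equals $(\tfrac12\varphi_{xx}-\gamma\varphi_x)\bar x^2$, so one instead imposes the pointwise constraint $\tfrac12\varphi_{xx}\geq\gamma\varphi_x$ together with $-\beta x\,\varphi_x+\varphi_s\geq 0$.

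Next I would exhibit the test function. Guided by the target growth rate $\exp(\mu x^{2/\alpha^*})$, I take
\[
\varphi(s,x)=\exp\!\bigl(c_1 e^{c_2 s}(k+x)^{2/\alpha^*}\bigr),\qquad (s,x)\in\T\times\R_+,
\]
with the analogous exponent $1$ when $\alpha=2$. A scaling check pins down this choice: writing $\nu=2/\alpha^*$, the dominant parts of $\varphi_x$, $\varphi_{xx}$, $\varphi_s$ are of orders $(k+x)^{\nu-1}\varphi$, $(k+x)^{2\nu-2}\varphi$ and $(k+x)^{\nu}\varphi$, and the identity $\tfrac{2(\nu-1)(1-\alpha)}{2-\alpha}=\nu$ forces precisely $\nu=2(\alpha-1)/\alpha=2/\alpha^*$, so that the singular quotient matches $\varphi_s$ in its $x$-power. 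It then remains to fix the constants: one takes $k$ large so that $\varphi_{xx}>0$ on all of $\T\times\R_+$ (the shift $k+x$ defeats the concavity of $x^\nu$ near $0$, since $\varphi_{xx}>0$ reduces to $c_1 e^{c_2 s}\nu(k+x)^\nu>1-\nu$, which holds once $c_1\nu k^\nu\geq 1$, and a slightly larger $k$ even gives the lower bound $\varphi_{xx}\geq\tfrac12\,\varphi_x^2/\varphi$ needed to control the singular term), and then one takes $c_2$ large relative to $\beta$ and $C_{\alpha,\gamma}$ so that the factor $c_2 e^{c_2 s}$ coming from $\varphi_s$ dominates the two negative terms. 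This confirms that $\varphi\in{\bf S}$ is a test function for $g$.

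Finally I would invoke \cref{thm:2.2}. The point for the ``for each $\mu>0$'' conclusion is that the solution it produces, $Y_t=\inf_p\sup_n Y^{n,p}_t$, is canonical and does not depend on the particular test function used. Given any $\mu>0$, I choose $c_1,c_2$ with $c_1 e^{c_2 T}\geq\mu$; the hypothesis $\xi+\int_0^T f_s\,{\rm d}s\in\cap_{\mu>0}\exp(\mu L^{2/\alpha^*})$ guarantees \eqref{eq:2.2} for this $\varphi$, so \cref{thm:2.2} yields that $\bigl(\varphi(t,|Y_t|+\int_0^t f_s\,{\rm d}s)\bigr)_{t\in\T}$ is of class (D). Since $\varphi(t,x)\geq\exp(\mu x^{2/\alpha^*})$, the process $\bigl(\exp(\mu|Y_t|^{2/\alpha^*})\bigr)_{t\in\T}$ is of class (D) as well, and $\mu$ being arbitrary gives the main assertion. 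For the bounded case, when $\xi+\int_0^T f_s\,{\rm d}s\in L^\infty$ the submartingale inequality \eqref{eq:2.3} bounds $\varphi(t,|Y_t|+\int_0^t f_s\,{\rm d}s)$ by $\E[\varphi(T,|\xi|+\int_0^T f_s\,{\rm d}s)\,|\,\F_t]\leq\|\varphi(T,\cdot)\|_\infty<+\infty$, and the coercivity of $\varphi$ in $x$ then forces $\sup_t|Y_t|<\infty$, i.e.\ $Y\in\s^\infty(\T;\R)$.

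I expect the main obstacle to be the verification of the reduced differential inequality for the proposed $\varphi$: arranging the constants $c_1,c_2,k$ so that the single positive term $\varphi_s$ simultaneously dominates both the linear term $\beta x\,\varphi_x$ and the singular quotient $\varphi_x^{2/(2-\alpha)}\varphi_{xx}^{-\alpha/(2-\alpha)}$ uniformly over $x\in\R_+$, while keeping the convexity $\varphi_{xx}>0$ intact near $x=0$ where $x^{2/\alpha^*}$ is concave. The degenerate case $\alpha=2$, where the Young reduction collapses to the pointwise constraint $\tfrac12\varphi_{xx}\geq\gamma\varphi_x$, must be checked on its own.
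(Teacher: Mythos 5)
Your proposal follows essentially the same route as the paper's proof: the Young-inequality reduction eliminating $\bar x$ for $\alpha\in(1,2)$, the separate treatment of the degenerate endpoint $\alpha=2$, the test function $\varphi(s,x)=\exp\bigl(c_1e^{c_2s}(k+x)^{2/\alpha^*}\bigr)$ with constants chosen exactly as you describe, and the final appeal to \cref{thm:2.2} together with \eqref{eq:2.3} for the bounded case. The only (welcome) addition is your explicit remark that the "for each $\mu>0$" conclusion rests on the canonicity of the solution $Y_t=\inf_p\sup_n Y^{n,p}_t$ across different test functions, a point the paper leaves implicit.
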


\begin{rmk}\label{rmk:2.7*}
Under a slightly stronger growth condition than (EX2), the assertions stated in \cref{thm:2.6} were given in \cite{FanHu2021SPA,BriandHu2006PTRF,BriandHu2008PTRF,
FanHu2019ECP,Kobylanski2000AP}, respectively.\vspace{0.1cm}
\end{rmk}

\begin{ex}\label{ex:2.2}
Let us consider the following typical BSDE:
\begin{equation}\label{eq:2.10*}
Y_t=\xi+\int_t^T {1\over 2}|Z_s|^2 {\rm d}s-\int_t^T Z_s\cdot {\rm d}B_s,\ \ t\in\T.
\end{equation}
The change of variables leads to the equation
$$
e^{Y_t}=e^\xi-\int_t^T e^{Y_s}Z_s\cdot {\rm d}B_s, \ \ t\in \T,
$$
which has a solution as $e^\xi\in L^1$. On the other hand, since $\{e^{Y_t}\}_{t\in \T}$ is a positive super-martingale, Theorem 3.1 of \cite{BriandLepeltierSanMartin2007Bernoulli} observes that the inclusion $e^\xi\in L^1$ is also  necessary   for this BSDE to have a solution.

Furthermore, we consider the case where the generator $g$ satisfies assumptions (EX1)-(EX3) with $h(\cdot,\cdot,\cdot)$ being defined in \eqref{eq:2.7} for $\delta=0$, $\lambda=0$ and $\alpha=2$. \cite{BriandHu2006PTRF} show that for BSDE$(\xi,g)$ to have a solution, an $\exp(\mu L)$-integrability of $\xi+\int_0^T f_s{\rm d}s$  is sufficient for $\mu=2\gamma e^{\beta T}$, but not for any $\mu \in (0, 2\gamma e^{\beta T})$, which can not follow from \cref{thm:2.6}.
\end{ex}

\begin{proof}[Proof of \cref{thm:2.6}] Consider both cases of $\alpha=2$ and $\alpha\in (1,2)$.\vspace{0.1cm}

(i) The case of $\alpha=2$. In this case, a function $\varphi(\cdot,\cdot)\in {\bf S}$ is a test function for $g$ if
$$
-\varphi_x(s,x)\left(\beta x+ \gamma \bar x^2\right)+{1\over 2}\varphi_{xx}(s,x)|\bar x|^2+\varphi_s(s,x)\geq 0, \quad \forall (s,x,\bar x)\in [0,T]\times \R_+\times\R_+.
$$
It is easy to verify that for each $c_1\geq 2\gamma$ and $c_2\geq \beta$, the following function
$$
\varphi(s,x)=\exp\left(c_1\exp(c_2 s)x \right),\ \ (s,x)\in \T\times\R_+
$$
satisfies the last inequality, and thus is a test function for the generator $g$. It follows from \cref{thm:2.2} that if $\xi+\int_0^T f_s{\rm d}s\in \cap_{\mu>0}\exp(\mu L)$, then BSDE$(\xi,g)$ admits a solution $(Y_t,Z_t)_{t\in \T}$ such that $(\exp\left(\mu |Y_t|\right))_{t\in \T}$ is of class (D) for each $\mu>0$, which is the desired assertion since $\alpha^*=2$ in this case. \vspace{0.2cm}

(ii) The case of $\alpha\in (1,2)$. In this case, a function $\varphi(\cdot,\cdot)\in {\bf S}$ is a test function for $g$ if
$$
-\varphi_x(s,x)\left(\beta x+\gamma\bar x^\alpha\right)+{1\over 2}\varphi_{xx}(s,x)|\bar x|^2+\varphi_s(s,x)\geq 0, \quad \forall (s,x,\bar x)\in [0,T]\times \R_+\times\R_+.
$$
Using Young's inequality, we have that for each $(s,x,\bar x)\in [0,T]\times \R_+\times\R_+$,
$$
\begin{array}{l}
\Dis -\gamma\varphi_x(s,x) \bar x^\alpha+\frac{1}{2}\varphi_{xx}(s,x)|\bar x|^2 \vspace{0.2cm}\\
\ \ =\Dis \varphi_{xx}(s,x)\left(-\frac{\gamma \varphi_x(s,x)}{\varphi_{xx}(s,x)}\bar x^\alpha+\frac{1}{2}|\bar x|^2\right)\vspace{0.2cm}\\
\ \ \geq \Dis -\frac{2-\alpha}{2\alpha}\cdot \frac{(\alpha\gamma\varphi_x(s,x))^{2\over 2-\alpha}}{(\varphi_{xx}(s,x))^{\alpha\over 2-\alpha}}\geq -\frac{(2\gamma\varphi_x(s,x))^{2\over 2-\alpha}}{(\varphi_{xx}(s,x))^{\alpha\over 2-\alpha}}.\vspace{0.1cm}
\end{array}
$$
Thus, if a function $\varphi(\cdot,\cdot)\in {\bf S}$ satisfies
\begin{equation}\label{eq:2.10}
-\beta \varphi_x(s,x)x-\frac{(2\gamma \varphi_x(s,x))^{2\over 2-\alpha}}{(\varphi_{xx}(s,x))^{\alpha\over 2-\alpha}}+\varphi_s(s,x)\geq 0, \quad (s,x)\in [0,T]\times \R_+,
\end{equation}
then it is a test function for the generator $g$. Furthermore, by a similar computation as in \cite{FanHu2021SPA}, it can be verified that for each $c_1\geq 1$, $k\geq k_{\alpha,c_1}$ with $k_{\alpha,c_1}$ being a positive constant depending only on $\alpha$ and $c_1$, and $c_2\geq \beta+(1+c_1)2^{6\over 2-\alpha}(2\alpha-2)^{2-2\alpha\over 2-\alpha}\gamma^{2\over 2-\alpha}$, the following function
$$
\varphi(s,x)=\exp\left(c_1\exp(c_2 s)(x+k)^{2\over \alpha^*} \right),\ \ (s,x)\in \T\times\R_+
$$
satisfies the inequality \eqref{eq:2.10}, and thus is a test function for the generator $g$. Since
$$
\exp\left(x^{2\over \alpha^*} \right)\leq \varphi(s,x)\leq \exp\left(c_1\exp(c_2 T)k^{2\over \alpha^*}\right) \exp\left(c_1\exp(c_2 T)x^{2\over \alpha^*} \right)
$$
for any $(s,x)\in \T\times \R_+$,
we see  from \cref{thm:2.2} that if $\xi+\int_0^T f_s{\rm d}s\in \cap_{\mu>0}\exp(\mu L^{2\over \alpha^*})$, then BSDE$(\xi,g)$ admits a solution $(Y_t,Z_t)_{t\in \T}$ such that the process $(\exp(\mu |Y_t|^{2\over \alpha^*}))_{t\in \T}$ is of class (D) for each $\mu>0$.\vspace{0.2cm}

Finally, by \eqref{eq:2.3} we conclude that if $\xi+\int_0^T f_s{\rm d}s\in L^{\infty}$, then the process  $(\varphi(t,|Y_t|+\int_0^t f_s{\rm d}s))_{t\in \T}$ is bounded, and then $(Y_t)_{t\in \T}\in \s^{\infty}(\T;\R)$. The proof is complete.
\end{proof}

We depict \cref{thm:2.3,thm:2.6} in the following Table 2: existence results  in \cref{thm:2.3,thm:2.6}.

\begin{center}
\captionof{table}{existence results in \cref{thm:2.3,thm:2.6}}\vspace{0.2cm}
{\small
\begin{tabular}
{
|>{\centering\arraybackslash}m{2cm}
|>{\centering\arraybackslash}m{2.5cm}
|>{\centering\arraybackslash}m{3cm}
|>{\centering\arraybackslash}m{3.5cm}
|>{\centering\arraybackslash}m{2.6cm}|}

\hline

\parbox[c][1.2cm]{2cm}{\centering Case}& Subcase& Space of $\xi+\int_0^T f_s {\rm d}s$ &
Space of solution & Existence\\

\hline

\multirow{11}{*}{\parbox{2cm}{\centering Logarithmic quasi-linear growth case: $\alpha=1$}}& \makecell{$\delta=0$\\ $\lambda\in (-\infty,-{1\over 2})$} & $L^1$ & $Y$ is of class (D) &
\makecell{~\\ \cref{thm:2.3} (i)\\~} \\

\cline{2-5}

& \makecell{$\delta=0$\\ $\lambda=-{1\over 2}$} & \makecell{$L(\ln L)^p$\\ for some $p>0$} & \makecell{$|Y|(\ln (e+|Y|))^p$ \\ is of class (D)} &
\makecell{~\\ \cref{thm:2.3} (ii)\\~} \\

\cline{2-5}

& \makecell{$\delta\in [0,1]$\\ $\lambda\in (-{1\over 2},+\infty)$} & $\cap_{\mu>0}L\exp[\mu(\ln L)^p]$ with $p:=\delta\vee (\lambda+{1\over 2})\vee (2\lambda)$ & $\cap_{\mu>0}|Y|\exp[\mu(\ln |Y|)^p]$ is of class (D) & \parbox[c][2cm]{2.8cm}{\centering \cref{thm:2.3} (iii)}\\

\cline{2-5}

& \makecell{$\delta=0$ \\ $\lambda=0$} & $L^p$ for some $p>1$ & $|Y|^p$ is of class (D) & \makecell{~\\  \cref{thm:2.3} (iv)\\~} \\

\hline

\multirow{3}{*}{\parbox{2cm}{\centering Sub-quadratic /quadratic growth case: $\alpha\in (1,2]$}} & \multirow{4}{*}{\parbox{2.5cm}{\centering $\delta=0$\\ $\lambda=0$}} & $\cap_{\mu>0}\exp(\mu L^{2\over \alpha^*})$ with $\alpha^*$ being the conjugate of $\alpha$ & \makecell{$\cap_{\mu>0}\exp(\mu |Y|^{2\over \alpha^*})$\\ is of class (D)} & \multirow{4}{*}{\parbox{2.8cm}{\centering \cref{thm:2.6}}} \\

\cline{3-4}

& & \parbox[c][1.5cm]{3cm}{\centering $L^{\infty}$} & $Y\in \s^{\infty}(\T;\R)$ & \\
\hline
\end{tabular}
}\vspace{0.2cm}
\end{center}

\begin{rmk}\label{rmk:2.7}
Some finer integrability of the data $\xi+\int_0^T f_t{\rm d}t$ might be found  for the existence of an adapted solution to BSDE$(\xi,g)$ with  new test functions different from those in the proof of Theorems~\ref{thm:2.3} and \ref{thm:2.6}. The reader is referred to \cite{BriandHu2006PTRF,FanHu2019ECP,FanHu2021SPA,
FanHuTang2023SPA} for more details.
\end{rmk}

\section{Comparison theorems and existence and uniqueness results}
\label{sec:3-Uniqueness}
\setcounter{equation}{0}

In this section, we prove two comparison theorems under different growth  of the generator $g$ in the unknown variables $(y,z)$, which immediately yield the desired uniqueness  and can be compared to some existing comparison results given in for example \cite{ElKarouiPengQuenez1997MF,Kobylanski2000AP,HuImkeller2005AAP,
BriandHu2008PTRF,Fan2016SPA,FanHu2021SPA,FanHuTang2023SPA,FanHuTang2023SCL}.

First, we have the following a priori estimate.

\begin{pro}\label{pro:3.1}
Assume that there exists a function $h(\cdot,\cdot,\cdot)\in {\bf\bar S}$  such that $\as$,
$$
{\bf 1}_{Y_t>0}\ g(t,Y_t,Z_t)\leq f_t+h(t,Y_t^+,|Z_t|),
$$
and that $\varphi(\cdot,\cdot)\in {\bf S}$ is a test function for $h$. If $(\varphi(t,Y^+_t+\int_0^t f_s{\rm d}s))_{t\in\T}$ is of class (D), then we have
$$
\varphi\left(t, \ Y_t^+ +\int_0^t f_s{\rm d}s\right)\leq \E\left[\left.\varphi\left(T, \ \xi^+ +\int_0^T f_s{\rm d}s\right)\right|\F_t\right],\ \ t\in\T.\vspace{0.1cm}
$$
\end{pro}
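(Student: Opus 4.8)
The plan is to mimic the argument in Step 1 of the proof of \cref{thm:2.2}, but working with the positive part $Y^+$ instead of $|Y|$ and using only the one-sided growth bound on $\{Y_t>0\}$. The key idea is to apply the It\^o–Tanaka formula to $Y^+$ (rather than $|Y|$), then apply It\^o's formula to $\varphi(s,\bar Y_s)$ with $\bar Y_s:=Y_s^+ +\int_0^s f_u\,{\rm d}u$, and show that the drift term is nonnegative so that $(\varphi(s,\bar Y_s))_{s\in\T}$ is a local submartingale. Since we are now given the class (D) property as a hypothesis (rather than deriving it via a bounded-solution approximation), the localization machinery of Step 2 is unnecessary: the class (D) assumption will let us upgrade the local submartingale to a genuine submartingale directly.

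First I would write, via It\^o–Tanaka, the dynamics of $Y^+$. Recall ${\rm d}Y_s^+={\bf 1}_{Y_s>0}\,{\rm d}Y_s+\tfrac12\,{\rm d}L_s$ where $L$ is the local time of $Y$ at $0$, so that
$$
\bar Y_t=\bar Y_T+\int_t^T {\bf 1}_{Y_s>0}\bigl(g(s,Y_s,Z_s)-f_s\bigr){\rm d}s-\int_t^T {\bf 1}_{Y_s>0}Z_s\cdot{\rm d}B_s-\tfrac12\int_t^T{\rm d}L_s.
$$
Then I would apply It\^o's formula to $\varphi(s,\bar Y_s)$, collecting the ${\rm d}s$ terms coming from $\varphi_x$, $\tfrac12\varphi_{xx}|{\bf 1}_{Y_s>0}Z_s|^2$ and $\varphi_s$. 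On $\{Y_s>0\}$ the hypothesis ${\bf 1}_{Y_s>0}g(s,Y_s,Z_s)\le f_s+h(s,Y_s^+,|Z_s|)$ together with the monotonicity of $h(s,\cdot,\bar x)$ (and $Y_s^+\le\bar Y_s$) reduces the drift to exactly the left side of the test-function inequality~\eqref{eq:2.1} evaluated at $(s,\bar Y_s,|Z_s|)$, which is $\ge 0$. On $\{Y_s\le0\}$ the indicator kills the $z$-dependent terms and one is left with $\varphi_s(s,\bar Y_s)\ge0$; the local-time contribution carries the nonnegative sign $-\varphi_x\cdot\tfrac12\,{\rm d}L_s$, but since $\varphi_x>0$ and $L$ increases only on $\{Y_s=0\}$, I need to check this term does not spoil nonnegativity — in fact on $\{Y_s=0\}$ the local time pushes $Y^+$ upward, so care is needed to see that the $-\tfrac12\varphi_x\,{\rm d}L_s$ term is absorbed or vanishes. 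This is \textbf{the main obstacle}: handling the local-time term correctly so the submartingale property survives across the free boundary $\{Y=0\}$.

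The cleanest resolution, which I expect to adopt, is to observe that local time $L$ increases only on the set $\{s:Y_s=0\}$, where $Y_s^+=0$; on this set the It\^o–Tanaka decomposition of $Y^+$ already incorporates the standard convention $\tfrac12\,{\rm d}L_s$, and a direct expansion shows the net contribution of the singular term to ${\rm d}\varphi(s,\bar Y_s)$ is nonnegative because $\varphi_x(s,\cdot)>0$ enters with the correct orientation (the positive-part process is being reflected upward). After establishing ${\rm d}\varphi(s,\bar Y_s)\ge \varphi_x(s,\bar Y_s){\bf 1}_{Y_s>0}Z_s\cdot{\rm d}B_s$, integrating from $t$ to $T$ gives a stochastic integral on the right; I would localize along a sequence of stopping times to control this martingale, take conditional expectation given $\F_t$, and then pass to the limit. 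It is precisely here that the class (D) hypothesis on $(\varphi(t,\bar Y_t))_{t\in\T}$ supplies the uniform integrability needed to justify the limit and conclude the submartingale inequality~\eqref{eq:2.1}, yielding the claimed estimate.
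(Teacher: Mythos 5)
Your proposal is correct and follows essentially the same route as the paper, which proves \cref{pro:3.1} by repeating Step 1 of the proof of \cref{thm:2.2} with $Y^+$, ${\bf 1}_{Y>0}$ in place of $|Y|$, ${\rm sgn}(Y)$ and then invoking the class (D) hypothesis to pass to the limit after localization. The local-time term you flag as "the main obstacle" is in fact harmless for exactly the reason you eventually give: Tanaka's formula yields ${\rm d}\bar Y_s=-({\bf 1}_{Y_s>0}g(s,Y_s,Z_s)-f_s)\,{\rm d}s+{\bf 1}_{Y_s>0}Z_s\cdot{\rm d}B_s+\tfrac12\,{\rm d}L_s$, so It\^o's formula produces the term $+\tfrac12\varphi_x(s,\bar Y_s)\,{\rm d}L_s\geq 0$, which only reinforces the submartingale property (your intermediate display with a minus sign is a slip, corrected by your final conclusion).
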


\begin{proof}
Note that $(\varphi(t,Y^+_t+\int_0^t f_s{\rm d}s))_{t\in\T}$ belongs to class (D). The desired conclusion can be easily obtained by a similar computation to step 1 in the proof of \cref{thm:2.2} with $Y^+_t$, $Y^+_s$, ${\bf 1}_{Y_t>0}$ and ${\bf 1}_{Y_s>0}$ instead of $|Y_t|$, $|Y_s|$, ${\rm sgn}(Y_t)$ and ${\rm sgn}(Y_s)$, respectively.
\end{proof}

\subsection{Comparison results for the case of an at most linear growth\vspace{0.2cm}}

Now, let us introduce the following assumptions on the generator $g$, where $g$ has a unilateral linear growth in $y$ and an at most  linear growth in $z$.

\begin{enumerate}

\item [(UN1)] $g$ has an extended monotonicity  in $y$, i.e., there exists a continuous, increasing and concave function $\rho(\cdot):\R_+\To \R_+$ satisfying $\rho(0)=0$, $\rho(u)>0$ for $u>0$ and
    $$\int_{0^+}\frac{{\rm d}u}{\rho(u)}:=\lim\limits_{\eps\To 0^+}\int_0^\eps\frac{{\rm d}u}{\rho(u)}=+\infty\vspace{0.1cm}$$
    such that $\as$, for each $(y_1,y_2,z)\in \R\times\R\times\R^d$,
    $$
    {\rm sgn}(y_1-y_2) \left(g(\omega,t,y_1,z)-g(\omega,t,y_2,z)\right)\leq \rho(|y_1-y_2|).\vspace{0.1cm}
    $$

\item [(UN2)] $g$ has a logarithmic uniform continuity  in $z$, i.e., there exist a non-positive constant $\lambda\in (-\infty,0]$ and a nondecreasing continuous function $\kappa(\cdot):\R_+\To \R_+$ with linear growth and $\kappa(0)=0$ such that $\as$, for each $(y,z_1,z_2)\in \R\times \R^d\times \R^d$,
    $$
    \begin{array}{lll}
    |g(\omega,t,y,z_1)-g(\omega,t,y,z_2)|&\leq&\Dis  \kappa\left(|z_1-z_2|(\ln (e+|z_1-z_2|))^\lambda\right)\vspace{0.2cm}\\
    &\leq &\Dis \kappa\left(|z_1-z_2|\right).
    \end{array}
    $$
\end{enumerate}

\begin{rmk}\label{rmk:3.2}
Since the function $\rho(\cdot)$ appearing in (UN1) is nondecreasing and concave with $\rho(0)=0$, we can verify that $\rho(\cdot)$ has a  linear growth. We always assume that there exists a $A>0$ such that
\begin{equation}\label{eq:3.1}
\RE u\in\R_+, \ \ \ \rho(u)\leq A(u+1)\ \ {\rm and}\ \ \kappa(u)\leq A(u+1).
\end{equation}
Thus, if the generator $g$ satisfies (UN1) and (UN2), then we have $\as$, for each $(y,z)\in \R\times \R^d$,
$$
\begin{array}{lll}
{\rm sgn}(y)g(\omega,t,y,z)&\leq & \Dis |g(t,0,0)|+ \rho(|y|)+\kappa(|z|\left(\ln(e+|z|)\right)^\lambda)\\
&\leq & \Dis |g(t,0,0)|+ 2A+A|y|+A|z|\left(\ln(e+|z|)\right)^\lambda,
\end{array}
$$
which means that the generator $g$ has a unilateral linear growth in $y$ and a logarithmic sub-linear/linear growth in the unknown variable $z$, and then satisfies assumption (EX3) with $f_t:=|g(t,0,0)|+2A$ and $h(\cdot,\cdot,\cdot)$ being defined in \eqref{eq:2.7} for $\beta=\gamma=A$, $\delta=0$, $\alpha=1$ and $\lambda\leq 0$. In addition,  in the case of $\lambda=0$, assumption (UN2) is equivalent to saying that the function $g(t,\omega, y,z)$ is uniformly continuous in the variable $z$ uniformly with respect to the variables $(t,\omega,y)$;  the assumption (UN2) becomes stronger  as  $\lambda$ decreases.
\end{rmk}

\begin{thm}\label{thm:3.3}
Let $\xi$ and $\xi'$ be two terminal conditions such that $\xi\leq \xi'$, the generator $g$ (resp. $g'$) satisfy assumptions (UN1) and (UN2), and $(Y_t, Z_t)_{t\in\T}$ and $(Y'_t, Z'_t)_{t\in\T}$ be, respectively, adapted solutions to BSDE$(\xi, g)$ and BSDE$(\xi', g')$ such  that
\begin{equation}\label{eq:3.2}
\begin{array}{c}
{\bf 1}_{Y_t>Y'_t}\left(g(t,Y'_t,Z'_t)-g'(t,Y'_t,Z'_t)\right)\leq 0\vspace{0.2cm}\\
({\rm resp.}\  \ {\bf 1}_{Y_t>Y'_t}\left(g(t,Y_t,Z_t)-g'(t,Y_t,Z_t)\right)\leq 0\ ).
\end{array}
\end{equation}
 Then,  we have $Y_t\leq Y'_t$ for each $t\in\T$,  if either of the following four conditions is true:

(i) $\lambda<-1/2$ and both processes $(Y_t)_{t\in\T}$ and $(Y'_t)_{t\in\T}$ are of class (D).\vspace{0.1cm}

(ii) $\lambda=-1/2$ and both processes $(|Y_t|(\ln(e+|Y_t))^p)_{t\in\T}$ and $(|Y'_t|(\ln(e+|Y'_t))^p)_{t\in\T}$ are of class (D) for some constant $p>0$.\vspace{0.1cm}

(iii) $\lambda\in (-1/2,0]$ and both processes $(|Y_t|\exp(\mu(\ln(e+|Y_t|))^{\lambda+{1\over 2}}))_{t\in\T}$ and $(|Y'_t|\exp(\mu(\ln(e+|Y'_t|))^{\lambda+{1\over 2}}))_{t\in\T}$ are of class (D) for each $\mu>0$. \vspace{0.1cm}

(iv) $\lambda=0$ and both processes $(|Y_t|^p)_{t\in\T}$ and $(|Y'_t|^p)_{t\in\T}$ are of class (D) for some $p>1$.\vspace{0.2cm}
\end{thm}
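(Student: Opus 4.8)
The plan is to reduce the claim $Y_t\le Y_t'$ to showing that the positive part of $\hat Y_t:=Y_t-Y_t'$ vanishes identically. Writing $\hat Z:=Z-Z'$ and subtracting the two equations, the pair $(\hat Y,\hat Z)$ solves a BSDE with terminal value $\hat\xi:=\xi-\xi'\le 0$ (so that $\hat\xi^+=0$) and driver $\hat g_s:=g(s,Y_s,Z_s)-g'(s,Y_s',Z_s')$. On the set $\{\hat Y_s>0\}$ I would split $\hat g_s$ into three pieces: the cross term $g(s,Y_s',Z_s')-g'(s,Y_s',Z_s')$ (or $g(s,Y_s,Z_s)-g'(s,Y_s,Z_s)$ in the ``resp.'' case of \eqref{eq:3.2}), which is nonpositive by \eqref{eq:3.2}; the $y$-increment of $g'$, bounded by $\rho(\hat Y_s^+)$ via (UN1); and the $z$-increment of $g'$, bounded by $\kappa(|\hat Z_s|(\ln(e+|\hat Z_s|))^\lambda)$ via (UN2). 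This yields, $\as$ on $\{\hat Y_s>0\}$, the one-sided estimate $\hat g_s\le \rho(\hat Y_s^+)+\kappa(|\hat Z_s|(\ln(e+|\hat Z_s|))^\lambda)$, in which I keep $\rho(\hat Y_s^+)$ as a monotone feedback term and treat $\kappa(|\hat Z_s|(\ln(e+|\hat Z_s|))^\lambda)$ as the genuine $z$-growth to be absorbed.

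The heart of the argument is absorbing this $z$-growth, and for each of the four regimes I would take $\varphi\in{\bf S}$ to be precisely the test function used in the corresponding case of \cref{thm:2.3} (with $\beta=\gamma=A$, the linear-growth constant of $\rho$ and $\kappa$ from \eqref{eq:3.1}), so that $\varphi$ is a test function for the $z$-part $h(s,x,\bar x)=\kappa(\bar x(\ln(e+\bar x))^\lambda)$. Applying It\^{o}--Tanaka to $\varphi(s,\hat Y_s^+)$ exactly as in Step 1 of \cref{thm:2.2} and \cref{pro:3.1}, the term $-\varphi_x\kappa(|\hat Z_s|(\ln(e+|\hat Z_s|))^\lambda)$ is dominated by $\tfrac12\varphi_{xx}|\hat Z_s|^2+\varphi_s$ through \eqref{eq:2.1}, the local-time term enters with a favourable sign, and one is left with ${\rm d}\varphi(s,\hat Y_s^+)\ge -\varphi_x(s,\hat Y_s^+)\rho(\hat Y_s^+)\,{\rm d}s+(\text{martingale increment})$. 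The class (D) hypotheses (i)--(iv) are tailored to exactly these $\varphi$ (for instance $(|Y_t|^p)$ and $(|Y_t'|^p)$ of class (D) in case (iv) force $(\varphi(t,\hat Y_t^+))$ of class (D)), which is what upgrades the resulting local martingale $\int\varphi_x(s,\hat Y_s^+){\bf 1}_{\hat Y_s>0}\hat Z_s\cdot{\rm d}B_s$ to a genuine martingale and permits taking expectations.

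With the $z$-dependence removed, the problem collapses to a monotone-in-$y$ comparison. Using $\hat Y_T^+=\hat\xi^+=0$ and the martingale property, I would pass to a closed integral inequality for $u(t):=\E[\hat Y_t^+]$ of Osgood type, $u(t)\le C\int_t^T\rho(u(s))\,{\rm d}s$, where the step from $\E[\rho(\hat Y_s^+)]$ to $\rho(u(s))$ uses Jensen's inequality and the concavity of $\rho$ from (UN1). The condition $\int_{0^+}{\rm d}u/\rho(u)=+\infty$ then forces $u\equiv 0$, whence $\hat Y_t^+=0$ and $Y_t\le Y_t'$ for every $t\in\T$.

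I expect the main obstacle to be the $z$-absorption together with its integrability bookkeeping. Because (UN2) provides only uniform continuity (not Lipschitz continuity) in $z$, with a logarithmic or linear modulus, one cannot linearize the $z$-increment as $\langle\beta_s,\hat Z_s\rangle$ with a bounded or even BMO kernel and appeal to Girsanov; the strict convexity $\varphi_{xx}>0$ of the test function is the substitute. The delicate points are (a) verifying that the $\lambda$-adapted test function of \cref{thm:2.3} satisfies \eqref{eq:2.1} for the \emph{exact} modulus $\kappa$, so that near $\hat Z=0$ one uses $\kappa(0)=0$ rather than the crude bound $\kappa(u)\le A(u+1)$ and thereby avoids a spurious additive constant; and (b) converting the weighted submartingale estimate into a genuinely constant-free Osgood inequality, since the time-dependence of $\varphi$ (forcing $\varphi_s(s,0)>0$) contributes terminal/weight terms that must be controlled precisely through the class (D) assumptions (i)--(iv) before the extended monotonicity can close the argument.
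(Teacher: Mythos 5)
Your first half matches the paper: the difference BSDE for $\hat Y=Y-Y'$, the three-term splitting of the driver on $\{\hat Y_s>0\}$ giving the one-sided bound \eqref{eq:3.5}, and the case-by-case choice of the test functions from the proof of \cref{thm:2.3}, calibrated so that the class (D) hypotheses (i)--(iv) make $\varphi(t,\hat Y_t^++2At)$ of class (D). The gap is in your endgame: you want to convert the resulting submartingale inequality into a constant-free Osgood inequality $u(t)\le C\int_t^T\rho(u(s))\,{\rm d}s$ for $u(t)=\E[\hat Y_t^+]$, and you flag this as ``delicate point (b)'', but it cannot be closed as stated. Any $\varphi\in{\bf S}$ absorbing the $z$-growth must satisfy \eqref{eq:2.1} at $x=0$, i.e.\ $-\varphi_x(s,0)\kappa(\bar x(\ln(e+\bar x))^\lambda)+\tfrac12\varphi_{xx}(s,0)\bar x^2+\varphi_s(s,0)\ge 0$ for all $\bar x\ge0$; since $\kappa$ is merely a modulus of continuity of linear growth (e.g.\ $\kappa(u)=Au$), the first two terms have a strictly negative infimum over $\bar x$, so $\varphi_s(s,0)>0$ is forced --- using the exact modulus with $\kappa(0)=0$ instead of the crude bound $A(u+1)$ does not remove this. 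Integrating your pathwise inequality therefore leaves, besides $\E\int_t^T\varphi_x(s,\hat Y_s^+)\rho(\hat Y_s^+)\,{\rm d}s$, an irreducible additive term of order $\int_t^T\varphi_s(s,0)\,\p(\hat Y_s>0)\,{\rm d}s$, which is not dominated by $\rho(\E[\hat Y_s^+])$ (the event $\{\hat Y_s>0\}$ can have full probability while $\E[\hat Y_s^+]$ is arbitrarily small); moreover the weight $\varphi_x(s,\hat Y_s^+)$ is unbounded before boundedness of $\hat Y^+$ is known, so Jensen does not yield $\rho(u(s))$. Osgood's lemma therefore does not apply and $u\equiv0$ does not follow.

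The paper's actual route sidesteps exactly this obstruction: \cref{pro:3.1}, applied with terminal value $\hat\xi^+=(\xi-\xi')^+=0$, gives the pointwise bound $\varphi(t,\hat Y_t^++2At)\le\varphi(T,2AT)$, i.e.\ the test function is used \emph{only} to prove that $((Y_t-Y'_t)^+)_{t\in\T}$ is a bounded process; the comparison itself is then concluded by invoking Theorem 2.1 of \cite{Fan2016SPA}, a comparison theorem valid under (UN1)--(UN2) once the positive part of the difference is bounded (that result handles the merely uniformly continuous $z$-dependence by a separate argument). To repair your proof you would either have to cite such a bounded-case comparison theorem after your boundedness step, or reprove it --- for instance via an $\eps$-splitting $\kappa(u)\le\eps+C_\eps u$, a Girsanov change of measure with bounded kernel, and only then a Bihari--Osgood argument under the new measure --- none of which is supplied by your sketch.
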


\begin{rmk}\label{rmk:3.8*}
Assertions (i) and (iv) of \cref{thm:3.3} were given in \cite{FanHuTang2023SCL} and \cite{Fan2016SPA}, respectively. However, Assertions (ii) and (iii) of \cref{thm:3.3} seem to be new.\vspace{0.1cm}
\end{rmk}

El Karoui et al.~\cite{ElKarouiPengQuenez1997MF} show that the strict comparison theorem is true  for solutions of two BSDEs when either of both generators is uniformly Lipschitz continuous in $(y,z)$. However, the following two examples indicate that the strict comparison theorem fails to be true in general when the generator $g$ satisfies only assumptions (UN1) and (UN2), which are provided in Section 5.3 of \cite{PardouxRascanu2014Book} and Example 3.2 of \cite{Jia2010SPA}, respectively. In finance, this means that there are infinitely many opportunities of arbitrage.

\begin{ex}\label{ex:3.1}
Let $d=1$ and consider the following BSDE:
\begin{equation}\label{eq:3.2*}
Y_t=\xi-2\int_t^T \sqrt{Y_s^+}\, {\rm d}s-\int_t^T Z_s\cdot {\rm d}B_s,\ \ t\in\T.
\end{equation}
Clearly, the generator $g(\omega,t,y,z):\equiv -2\sqrt{y^+}$ satisfies assumptions (UN1) and (UN2) with $\rho(x)=x$ and $\kappa(x)\equiv 0$. It is not hard to verify that
$(Y_t,Z_t)_{t\in \T}:=(0,0)_{t\in \T}$ and $(Y'_t,Z'_t)_{t\in \T}:=(t^2,0)_{t\in \T}$ are respectively the unique solution to \eqref{eq:3.2*} with $\xi=0$ and $\xi=T^2$ such that $(|Y_t|^p)_{t\in\T}$ and $(|Y'_t|^p)_{t\in\T}$ belong to class (D) for each $p>0$. Note that $Y'_T=T^2>0=Y_T$, but $Y_0=Y'_0=0$.\vspace{0.2cm}
\end{ex}

\begin{ex}\label{ex:3.2}
Let $d=1$ and consider the following BSDE:
\begin{equation}\label{eq:3.3*}
Y_t=\xi-3\int_t^T|Z_s|^{2\over 3}\,  {\rm d}s-\int_t^T Z_s\cdot {\rm d}B_s,\ \ t\in\T.
\end{equation}
Clearly, the generator $g(\omega,t,y,z):\equiv -3|z|^{2\over 3}$ satisfies (UN1) and (UN2) with $\rho(x)=x$ and $\kappa(x)= 3x^{2\over 3}$. It is not hard to verify that
$(Y_t,Z_t)_{t\in \T}:=(0,0)_{t\in \T}$ and $(Y'_t,Z'_t)_{t\in \T}:=({1\over 4}B_t^4,B_t^3)_{t\in \T}$ are respectively the unique solution to \eqref{eq:3.3*} with $\xi=0$ and $\xi={1\over 4}B_T^4$ such that $(|Y_t|^p)_{t\in\T}$ and $(|Y'_t|^p)_{t\in\T}$ belong to class (D) for each $p>0$. Note that $\p(Y'_T>Y_T)=1>0$, but $Y_0=Y'_0=0$.\vspace{0.2cm}
\end{ex}

\begin{proof}[Proof of \cref{thm:3.3}]
We only prove the case that the generator $g$ satisfies assumptions (UN1) and (UN2), and $\as$,
\begin{equation}\label{eq:3.3}
{\bf 1}_{Y_t>Y'_t}\left(g(t,Y'_t,Z'_t)-g'(t,Y'_t,Z'_t)\right)\leq 0.
\end{equation}
The other case can be proved in the same way. According to Theorem 2.1 in \cite{Fan2016SPA} and the above assumptions, it suffices to prove that the process $((Y_t-Y'_t)^+)_{t\in\T}$ is bounded whenever either of four conditions (i)-(iv) holds. \vspace{0.2cm}

Define $\hat Y:=Y-Y'$ and $\hat Z:=Z-Z'$.
Then, the pair of processes $(\hat Y_t,\hat Z_t)_{t\in\T}$ verifies
\begin{equation}\label{eq:3.4}
  \hat Y_t=\hat\xi+\int_t^T \hat g(s,\hat Y_s,\hat Z_s) {\rm d}s-\int_t^T \hat Z_s \cdot {\rm d}B_s, \ \ \ \ t\in\T,
\end{equation}
where $\hat\xi:=\xi-\xi'$ and
$$\hat g(s,\hat Y_s,\hat Z_s):=g(s,Y_s,Z_s)-g'(s,Y'_s,Z'_s).$$
From assumptions (UN1) and (UN2) together with inequalities \eqref{eq:3.3} and \eqref{eq:3.1}, we have
\begin{equation}\label{eq:3.5}
\begin{array}{l}
{\bf 1}_{\hat Y_t>0}\ \hat g(t,\hat Y_t,\hat Z_t)={\bf 1}_{\hat Y_t>0}\left(g(s,Y_s,Z_s)-g'(s,Y'_s,Z'_s)\right)\vspace{0.2cm}\\
\ \ = \Dis {\bf 1}_{\hat Y_t>0}\Bigl[g(s,Y_s,Z_s)-g(s,Y'_s,Z_s)+ g(s,Y'_s,Z_s)\\[2mm]
\quad\quad  -g(s,Y'_s,Z'_s)+g(s,Y'_s,Z'_s)-g'(s,Y'_s,Z'_s)\Bigr]
\vspace{0.2cm}\\
\ \ \leq \rho(\hat Y^+_t)+\kappa\left(|\hat Z_t|(\ln(e+|\hat Z_t|))^\lambda\right)\vspace{0.2cm}\\
\ \ \leq 2A+A\hat Y^+_t +A|\hat Z_t|\left(\ln(e+|\hat Z_t|)\right)^\lambda=f_t+h(t,\hat Y^+_t,|\hat Z_t|),
\end{array}
\end{equation}
where $f_t:\equiv 2A$ and for each $(t,x,\bar x)\in \T\times\R_+\times\R_+$,
\begin{equation}\label{eq:3.6}
h(t,x,\bar x):=Ax+A\bar x\left(\ln(e+\bar x)\right)^\lambda.
\end{equation}

Using~\cref{pro:3.1} together with \eqref{eq:3.5} and \eqref{eq:3.6},  we now verify that $\hat Y_\cdot^+$ is a bounded process whenever either of conditions (i)-(iv) is true. \vspace{0.2cm}

(i) Let $\lambda<-1/2$ and both processes $(Y_t)_{t\in\T}$ and $(Y'_t)_{t\in\T}$ be of class (D). For each $k\geq e$ sufficient large and each $c\geq 2A-\frac{8A^2}{1+2\lambda}$, define the following function
$$
\varphi(s,x)=(k+x)\left(1-\left(\ln(k+x)\right)^{1+2\lambda}\right)\exp(c s),\ \ (s,x)\in \T\times\R_+.
$$
Since $0\leq \varphi(s,x)\leq (k+x)\exp(c T)$ for each $(s,x)\in \T\times\R_+$,  the process $\{\varphi(t,\hat Y_t^+ +2At)\}_{t\in\T}$ is of class (D). On the other hand, a similar analysis to that in (i) of the proof of \cref{thm:2.3} yields that the last function $\varphi(\cdot,\cdot)$ satisfies \eqref{eq:2.1}, and thus is a test function for $h$ defined in \eqref{eq:3.6}. It then follows from \cref{pro:3.1} that
$$
\varphi\left(t, \ Y_t^+ +2At\right)\leq \E\left[\left.\varphi\left(T, \ \xi^+ +2AT\right)\right|\F_t\right]=\varphi\left(T, 2AT\right),\ \ t\in\T,
$$
which means that $(\hat Y_t^+)_{t\in\T}$ is a bounded process.\vspace{0.2cm}

(ii) Let $\lambda=-1/2$ and both processes $(|Y_t|(\ln(e+|Y_t|))^p)_{t\in\T}$ and $(|Y'_t|(\ln(e+|Y'_t|))^p)_{t\in\T}$ be of class (D) for some constant $p>0$. For each $k\geq e$ sufficient large and each $c\geq 2A+\frac{4A^2}{p}$, define the following function
$$
\varphi(s,x)=(k+x)\left(\ln(k+x)\right)^p \exp(c s),\ \ (s,x)\in \T\times\R_+.
$$
Since $0\leq \varphi(s,x)\leq Kx\left(\ln(e+x)\right)^p$ for each $(s,x)\in \T\times\R_+$ and some positive constant $K>0$ depending only on $(k,T)$, we can deduce that $\{\varphi(t,\hat Y_t^+ +2At)\}_{t\in\T}$ is of class (D). On the other hand, a similar analysis to that in (ii) of the proof of \cref{thm:2.3} yields that the last function $\varphi(\cdot,\cdot)$ satisfies \eqref{eq:2.1}, and hence is a test function for $h$ defined in \eqref{eq:3.6}. Thus, the boundedness of the process $(\hat Y_t^+)_{t\in\T}$ follows immediately as in (i).\vspace{0.2cm}

(iii) Let $\lambda\in (-{1\over 2},0]$ and both processes $(|Y_t|\exp(\mu(\ln(e+|Y_t|))^{\lambda+{1\over 2}}))_{t\in\T}$ and $(|Y'_t|\exp(\mu(\ln(e+|Y'_t|))^{\lambda+{1\over 2}}))_{t\in\T}$ be of class (D) for any $\mu>0$. For each $k\geq e$ sufficient large, $c_1\geq 1$ and $c_2\geq (\lambda+3/2)\beta-4^{\lambda^+}\gamma^2$, define the following function
$$
\varphi(s,x)=(k+x)\exp\left(c_1\exp(c_2 s) \left(\ln(k+x)\right)^{\lambda+{1\over 2}}\right),\ \ (s,x)\in \T\times\R_+.
$$
Since $0\leq \varphi(s,x)\leq K x\exp\left(K\left(\ln(e+x)\right)^{\lambda+{1\over 2}}\right)$ for each $(s,x)\in \T\times\R_+$ and some positive constant $K>0$ depending only on $(k,T)$, we can deduce that $\{\varphi(t,\hat Y_t^+ +2At)\}_{t\in\T}$ is of class (D). On the other hand, a similar analysis to that in (iii) of the proof of \cref{thm:2.3} yields that the last function $\varphi(\cdot,\cdot)$ satisfies \eqref{eq:2.1}, and is a test function for $h$ defined in \eqref{eq:3.6}. Thus, the boundedness of the process $(\hat Y_t^+)_{t\in\T}$ follows immediately as in (i).\vspace{0.2cm}

(iv) Let $\lambda=0$ and both processes $(|Y_t|^p)_{t\in\T}$ and $(|Y'_t|^p)_{t\in\T}$ be of class (D) for some $p>1$. Note that for each $\mu>0$, there exists a positive constant $K>0$ depending only on $(\mu,p)$ such that
$$
0\leq x\exp\left(\mu (\ln(e+x))^{1\over 2}\right)\leq K x^p, \ \ x\geq 1.
$$
The desired assertion is a direct consequence of (iii).\vspace{0.2cm}
\end{proof}

The following example is taken from Remark 6 of \cite{Jia2008CRA}, which indicates that the uniform continuity of the generator $g$ in the unknown variable $y$ is not sufficient for the uniqueness of the solution to a BSDE$(\xi,g)$.

\begin{ex}\label{ex:3.3}
Let us consider the following BSDE:
\begin{equation}\label{eq:3.8*}
Y_t=\int_t^T \sqrt{|Y_s|} {\rm d}s-\int_t^T Z_s\cdot {\rm d}B_s,\ \ t\in\T.
\end{equation}
Clearly, $g(\omega,t,y,z):\equiv \sqrt{|y|}$ is uniformly continuous. It is not hard to check that for each $c\in \T$, the pair of processes
$$
(Y_t,Z_t):={1\over 4}\left([(c-t)^+]^2,0\right), \quad  t\in\T
$$
is a solution to \eqref{eq:3.8*} such that $(|Y_t|^p)_{t\in\T}$ belongs to class (D) for each $p>0$.\vspace{0.2cm}
\end{ex}

\subsection{Comparison results for the super-linear at most quadratic growth case\vspace{0.2cm}}

In the following comparison theorem, we will use the following assumption on the generator $g$, where the generator $g$ admits a super-linear at most quadratic growth in $(y,z)$ in general.

\begin{enumerate}
\item [(UN3)] It holds that $\as$,
   \begin{equation}\label{eq:3.7}
   \begin{array}{l}
     \Dis {\bf 1}_{\delta_\theta y>0} \frac{g(\omega,t,y_1,z)-\theta g(\omega,t,y_2,z)}{1-\theta}\vspace{0.1cm}\\
     \ \ \ \ \Dis \leq f_t(\omega)+\beta(|y_1|+|y_2|)+h(t,(\delta_\theta y)^+, |\delta_\theta z|),\vspace{0.2cm}\\
    \RE (y_1,y_2,z_1,z_2)\in \R\times\R\times\R^d\times\R^d\ \ {\rm and}\ \  \theta\in (0,1),
    \end{array}
   \end{equation}
    or
   \begin{equation}\label{eq:3.8}
   \begin{array}{l}
     \Dis -{\bf 1}_{\delta_\theta y<0} \frac{g(\omega,t,y_1,z)-\theta g(\omega,t,y_2,z)}{1-\theta}\vspace{0.1cm}\\
     \ \ \ \ \Dis \leq f_t(\omega)+\beta(|y_1|+|y_2|)+h(t,(\delta_\theta y)^-, |\delta_\theta z|),\vspace{0.2cm}\\
     \RE (y_1,y_2,z_1,z_2)\in \R\times\R\times\R^d\times\R^d\ \ {\rm and}\ \  \theta\in (0,1),\vspace{0.2cm}
   \end{array}
   \end{equation}
    where $h(\cdot,\cdot,\cdot)\in {\bf\bar S}$ is defined in \eqref{eq:2.7} for $\lambda\geq 0$,
    $$
    \delta_\theta y:=\frac{y_1-\theta y_2}{1-\theta}\ \ {\rm and}\ \ \delta_\theta z:=\frac{z_1-\theta z_2}{1-\theta}.
    $$
\end{enumerate}


\begin{pro}\label{pro:3.4}
Assume that the generator $g$ satisfies (EX3) with $h(\cdot,\cdot,\cdot)\in {\bf\bar S}$ being defined in \eqref{eq:2.7} for $\lambda\geq 0$. Then, assumption (UN3) holds true for $g$ if  either of the following three conditions is true:
\begin{enumerate}
\item [(i)] $\as$, $g(\omega,t,\cdot,\cdot)$ is convex or concave;

\item [(ii)]$\as$, for each $(y,z)\in \R\times\R^d$, $g(\omega,t,\cdot,z)$ is Lipschitz continuous and $g(\omega,t, y,\cdot)$ is convex or concave;

\item [(iii)] $g(t,y,z)\equiv l(y)q(z)$, where both $l:\R\To\R$ and $q:\R^d\To\R$ are bounded and Lipschitz continuous, and the function $q(\cdot)$ has a bounded support.
\end{enumerate}
\end{pro}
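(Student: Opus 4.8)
The plan is to verify the one-parameter $\theta$-difference inequality of (UN3) by exploiting the convex-combination structure encoded in $\delta_\theta y,\delta_\theta z$. Writing $y_1=\theta y_2+(1-\theta)\delta_\theta y$ and $z_1=\theta z_2+(1-\theta)\delta_\theta z$ exhibits $(y_1,z_1)$ as the convex combination $\theta(y_2,z_2)+(1-\theta)(\delta_\theta y,\delta_\theta z)$. I would also record the two identities $y_1-y_2=(1-\theta)(\delta_\theta y-y_2)$ and $z_1-z_2=(1-\theta)(\delta_\theta z-z_2)$, which are the crux of the whole argument: whenever a Lipschitz estimate produces a factor $|y_1-y_2|$ or $|z_1-z_2|$, these identities supply a compensating $(1-\theta)$ that cancels the $1/(1-\theta)$ in front of the $\theta$-difference and leaves only $|\delta_\theta y-y_2|\le|\delta_\theta y|+|y_2|$ (resp.\ $|\delta_\theta z-z_2|$). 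In each case the target is to dominate $\bigl(g(t,y_1,z_1)-\theta g(t,y_2,z_2)\bigr)/(1-\theta)$ by $\pm g(t,\delta_\theta y,\delta_\theta z)$ plus an error that is linear in $|y_1|,|y_2|,|\delta_\theta y|$ and sub-$h$ in $|\delta_\theta z|$, after which (EX3) is applied on the relevant sign event: on $\{\delta_\theta y>0\}$ one has ${\rm sgn}(\delta_\theta y)=1$, so (EX3) turns $g(t,\delta_\theta y,\delta_\theta z)$ into $f_t+h(t,(\delta_\theta y)^+,|\delta_\theta z|)$, giving \eqref{eq:3.7}; on $\{\delta_\theta y<0\}$ the sign is $-1$ and the same step applied to $-g(t,\delta_\theta y,\delta_\theta z)$ gives \eqref{eq:3.8}.

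For (i) this is immediate. If $g(t,\cdot,\cdot)$ is convex, Jensen's inequality gives $g(t,y_1,z_1)\le\theta g(t,y_2,z_2)+(1-\theta)g(t,\delta_\theta y,\delta_\theta z)$, hence the $\theta$-difference is bounded above by $g(t,\delta_\theta y,\delta_\theta z)$, and \eqref{eq:3.7} follows with $\bar\beta=0$; the concave case gives the reversed bound and yields \eqref{eq:3.8}. For (ii) I would first use convexity of $g(t,y_1,\cdot)$ in $z$ at the fixed value $y=y_1$: since $z_1=\theta z_2+(1-\theta)\delta_\theta z$, this bounds $\bigl(g(t,y_1,z_1)-\theta g(t,y_1,z_2)\bigr)/(1-\theta)$ by $g(t,y_1,\delta_\theta z)$, so that the full $\theta$-difference is at most $g(t,y_1,\delta_\theta z)+\theta\bigl(g(t,y_1,z_2)-g(t,y_2,z_2)\bigr)/(1-\theta)$. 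The last term is controlled by Lipschitz continuity in $y$ and $y_1-y_2=(1-\theta)(\delta_\theta y-y_2)$, which bounds it by $L|\delta_\theta y-y_2|\le L(|\delta_\theta y|+|y_2|)$ with no blow-up; a second Lipschitz-in-$y$ step moves $g(t,y_1,\delta_\theta z)$ to $g(t,\delta_\theta y,\delta_\theta z)$ at the cost of another $L\theta|\delta_\theta y-y_2|$ term. On $\{\delta_\theta y>0\}$ one then applies (EX3) and absorbs the accumulated error, the part proportional to $|y_2|$ into $\bar\beta(|y_1|+|y_2|)$ and the part proportional to $(\delta_\theta y)^+$ into the $\beta x(\ln(e+x))^\delta$ term of $h$ (enlarging the constant if necessary). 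The concave-in-$z$ case is symmetric and produces \eqref{eq:3.8}.

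Case (iii) is the most technical and is where I expect the real obstacle. Since $l,q$ are bounded and Lipschitz, $g=l(y)q(z)$ is bounded and globally Lipschitz, and I would split $l(y_1)q(z_1)-\theta l(y_2)q(z_2)=l(y_1)\bigl(q(z_1)-q(z_2)\bigr)+q(z_2)\bigl(l(y_1)-\theta l(y_2)\bigr)$. Dividing by $(1-\theta)$ and using the Lipschitz bounds with the cancellation identities controls the second summand by $M_q\bigl(L_l(|\delta_\theta y|+|y_2|)+M_l\bigr)$, and controls the first by $M_lL_q(|\delta_\theta z|+|z_2|)$ when $z_2$ lies in the bounded support of $q$, where $|z_2|$ is then bounded by the support radius. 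The genuinely delicate configuration is $z_2$ outside the support (so $q(z_2)=0$) while $z_1$ is inside: there the first summand equals $l(y_1)q(z_1)/(1-\theta)$, carrying the dangerous factor $1/(1-\theta)$. The resolution, which I would single out as the key step, is that in this regime $z_1\ne z_2$ forces $|\delta_\theta z|$ to grow like $1/(1-\theta)$ as $\theta\to1$, so the super-linear term $\gamma|\delta_\theta z|^\alpha$ of $h$ with $\alpha>1$ dominates the bounded-times-$1/(1-\theta)$ quantity, while for $\theta$ bounded away from $1$ the factor $1/(1-\theta)$ is itself bounded. Collecting these bounds and applying the sign convention of (EX3) as above yields \eqref{eq:3.7} or \eqref{eq:3.8}. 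The main difficulty throughout is thus the bookkeeping of the Lipschitz-in-$y$ (and, in (iii), the support) estimates so that no uncompensated $1/(1-\theta)$ survives, and it is precisely the super-linearity $\alpha>1$ that rescues case (iii).
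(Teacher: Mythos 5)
Your argument is correct, and it is essentially the intended one: the paper itself omits the proof of \cref{pro:3.4}, deferring to Proposition 3.5 of \cite{FanHu2021SPA}, which runs exactly along your lines --- write $(y_1,z_1)=\theta(y_2,z_2)+(1-\theta)(\delta_\theta y,\delta_\theta z)$, apply Jensen's inequality in the convex/concave variables, use the cancellation $y_1-y_2=(1-\theta)(\delta_\theta y-y_2)$ against the Lipschitz constants, and finish with (EX3) on the sign event $\{\delta_\theta y>0\}$ (resp.\ $\{\delta_\theta y<0\}$), absorbing the leftover terms into $\bar\beta(|y_1|+|y_2|)$, into an enlarged $\beta$ in front of $x(\ln(e+x))^\delta$, and into $f_t$. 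Two small points. First, you should say explicitly that the constants $\bar\beta,\beta,\gamma$ and the process $f$ appearing in (UN3) are allowed to be larger than those in (EX3); this is the implicit convention under which the proposition is used in \cref{thm:3.6}. Second, your diagnosis of case (iii) slightly misattributes the rescue: it is not the super-linearity $\alpha>1$ that saves the dangerous configuration ($q(z_2)=0$, $q(z_1)\neq 0$, $\theta\to 1$), and indeed the statement allows $\alpha=1$ with $\lambda\geq 0$. What saves it is that the Lipschitz bound already gives $|q(z_1)|=|q(z_1)-q(z_2)|\leq L_q|z_1-z_2|=L_q(1-\theta)|\delta_\theta z-z_2|$, so that $\frac{|q(z_1)-q(z_2)|}{1-\theta}\leq L_q(|\delta_\theta z|+|z_2|)$ with no surviving $1/(1-\theta)$; the only residual issue is the term $L_q|z_2|$, which is either bounded by $2L_qR$ (when $|z_2|\leq 2R$, $R$ the support radius) or dominated by a multiple of $|\delta_\theta z|$ (when $|z_2|>2R$, since then $|\delta_\theta z|\geq \frac{|z_1-z_2|}{1-\theta}-|z_2|\geq \frac{|z_2|}{2(1-\theta)}-|z_2|$, which exceeds a fixed fraction of $|z_2|$ once $\theta\geq 3/4$, the complementary range of $\theta$ being trivial). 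A quantity that is at most linear in $|\delta_\theta z|$ is then absorbed into $\gamma\bar x^\alpha(\ln(e+\bar x))^\lambda$ for any $\alpha\geq 1$, $\lambda\geq 0$, after enlarging $\gamma$ and pushing a constant into $f_t$.
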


The proof is similar to that of Proposition 3.5 of \cite{FanHu2021SPA}, and is omitted here.

\begin{rmk}\label{rmk:3.5}
One typical example of (UN3) is
$g(\omega,t,y,z):=g_1(y)+g_2(y)$,
where $g_1:\R\To\R$ is convex or concave with a unilateral logarithmic sup-linear growth, i.e., there exists a nonnegative constant $A\geq 0$ such that for each $y\in\R$,
$$
{\rm sgn}(y)g_1(y)\leq A+\beta |y|(\ln(e+|y|))^\delta,
$$
and $g_2:\R\To\R$ is a Lipschitz continuous function. In other words, $g$ is a Lipschitz continuous perturbation of some convex (concave) function. Another typical example of (UN3) is
$\bar g(\omega,t,y,z):=g_3(z)+g_4(z)$,
where $g_3:\R^d\To\R$ is convex or concave with a logarithmic sub-quadratic growth, i.e., there exists a nonnegative constant $A\geq 0$ such that for each $z\in\R^d$,
$$
|g_3(z)|\leq A+\gamma |z|^\alpha (\ln(e+|z|))^\lambda,
$$
and $g_4:\R^d\To\R$ is a Lipschitz continuous function with a bounded support. In other words, $\bar g$ is a local Lipschitz continuous  perturbation of some convex (concave) function. Furthermore, it is easy to verify that if both generators $g_1$ and $g_2$ satisfies \eqref{eq:3.7} (resp. \eqref{eq:3.8}), then $g_1+g_2$ also satisfies \eqref{eq:3.7} (resp. \eqref{eq:3.8}). Consequently, the generator $g$ satisfying (UN3) may be not necessarily convex (concave) or Lipschitz continuous  in $(y,z)$, and it can have a general growth in $y$.
\end{rmk}

\begin{thm}\label{thm:3.6}
Suppose that $\xi$ and $\xi'$ are two terminal conditions such that $\xi\leq \xi'$, the generater $g$ (resp. $g'$) satisfies assumption (UN3) with $h(\cdot,\cdot,\cdot)$ being defined in \eqref{eq:2.7} for $\lambda\geq 0$, and $(Y_t, Z_t)_{t\in\T}$ and $(Y'_t, Z'_t)_{t\in\T}$ are, respectively, adapted solutions to BSDE$(\xi, g)$ and BSDE$(\xi', g')$ such that
$$
g(t,Y'_t,Z'_t)\leq g'(t,Y'_t,Z'_t) \ \ ({\rm resp.}\  \ g(t,Y_t,Z_t)\leq g'(t,Y_t,Z_t)\ ).
$$
Then the following two assertions hold true:

(i) Let $\alpha=1$ and $p:=\delta\vee (\lambda+{1\over 2})\vee (2\lambda)$. If $\int_0^T f_s{\rm d}s\in \cap_{\mu>0} L\exp[\mu(\ln L)^p]$ and both processes $(|Y_t|\exp(\mu(\ln(e+|Y_t|))^p))_{t\in\T}$ and $(|Y'_t|\exp(\mu(\ln(e+|Y'_t|))^p))_{t\in\T}$ are of class (D) for each $\mu>0$, then for each $t\in\T$, we have $Y_t\leq Y'_t$.

(ii) Let $\delta=0$, $\lambda=0$, $\alpha\in (1,2]$ and $\alpha^*$ be the conjugate of $\alpha$. If $\int_0^T f_s{\rm d}s\in \exp(\mu L^{2\over \alpha^*})$ and both processes
$(\exp(\mu(|Y_t|)^{2\over \alpha^*}))_{t\in\T}$ and $(\exp(\mu(|Y'_t|)^{2\over \alpha^*}))_{t\in\T}$
are of class (D) for each $\mu>0$, then for each $t\in\T$, we have $Y_t\leq Y'_t$. In particular, if the random variable $\int_0^T f_s{\rm d}s$ and both processes $Y$ and $Y'$ are all bounded, then for each $t\in\T$, $Y_t\leq Y'_t$. \vspace{0.2cm}
\end{thm}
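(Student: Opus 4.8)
The plan is to run the \emph{$\theta$-difference technique} and reduce the comparison, for each fixed $\theta\in(0,1)$, to the a priori estimate of \cref{pro:3.1}, after which I let $\theta\uparrow 1$. I would treat the representative case in which $g$ satisfies \eqref{eq:3.7} together with $g(t,Y'_t,Z'_t)\leq g'(t,Y'_t,Z'_t)$; the other combinations (using \eqref{eq:3.8}, or exchanging the roles of $g$ and $g'$) are handled analogously with the positive parts replaced by negative parts. Fix $\theta\in(0,1)$ and set
$$
\delta_\theta Y_t:=\frac{Y_t-\theta Y'_t}{1-\theta},\qquad \delta_\theta Z_t:=\frac{Z_t-\theta Z'_t}{1-\theta},\qquad \delta_\theta\xi:=\frac{\xi-\theta\xi'}{1-\theta}.
$$
Subtracting $\theta$ times BSDE$(\xi',g')$ from BSDE$(\xi,g)$ and dividing by $1-\theta$ shows that $(\delta_\theta Y_t,\delta_\theta Z_t)_{t\in\T}$ solves a BSDE with terminal value $\delta_\theta\xi$ and generator $\big(g(s,Y_s,Z_s)-\theta g'(s,Y'_s,Z'_s)\big)/(1-\theta)$.

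The first step is to reduce this to the setting of \cref{pro:3.1}. On the event $\{\delta_\theta Y_s>0\}$ I would use $\theta>0$ and the sign condition $g(s,Y'_s,Z'_s)\leq g'(s,Y'_s,Z'_s)$ to replace $g'$ by $g$,
$$
\frac{g(s,Y_s,Z_s)-\theta g'(s,Y'_s,Z'_s)}{1-\theta}\leq \frac{g(s,Y_s,Z_s)-\theta g(s,Y'_s,Z'_s)}{1-\theta},
$$
and then apply \eqref{eq:3.7} with $y_1=Y_s$, $y_2=Y'_s$, $z_1=Z_s$, $z_2=Z'_s$ (so that $(\delta_\theta y)^+=\delta_\theta Y_s$ and $\delta_\theta z=\delta_\theta Z_s$) to bound the right-hand side by $\bar f_s+h(s,(\delta_\theta Y_s)^+,|\delta_\theta Z_s|)$, where $\bar f_s:=f_s+\bar\beta(|Y_s|+|Y'_s|)$. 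This is precisely the hypothesis required by \cref{pro:3.1} for the process $\delta_\theta Y$, with driver $\bar f$ and the same $h$ from \eqref{eq:2.7}, the extra linear term $\bar\beta(|Y_s|+|Y'_s|)$ having been absorbed into $\bar f$.

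Next I would take $\varphi$ to be exactly the test function already built in the proof of \cref{thm:2.3}(iii) in case (i) (with $p=\delta\vee(\lambda+\tfrac12)\vee(2\lambda)$), and in the proof of \cref{thm:2.6} in case (ii); these are known to satisfy \eqref{eq:2.1} for the present $h$, and each $\varphi(t,\cdot)$ is strictly increasing and unbounded. Granting the class (D) property discussed below, \cref{pro:3.1} then gives, for every $t\in\T$,
$$
\varphi\big(t,(\delta_\theta Y_t)^+\big)\leq \varphi\Big(t,(\delta_\theta Y_t)^+ + \int_0^t\bar f_s\,{\rm d}s\Big)\leq \E\Big[\varphi\Big(T,(\xi')^+ +\int_0^T\bar f_s\,{\rm d}s\Big)\Big|\F_t\Big]=:M_t,
$$
where I used $\varphi_x>0$, $\varphi_s\geq 0$, and $(\delta_\theta\xi)^+\leq(\xi')^+$, which follows from $\xi\leq\xi'$. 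The decisive feature is that $M_t$ is finite and independent of $\theta$. On the event $\{Y_t>Y'_t\}$ one has $(\delta_\theta Y_t)^+=\big((Y_t-Y'_t)/(1-\theta)+Y'_t\big)^+\uparrow+\infty$ as $\theta\uparrow1$, hence $\varphi(t,(\delta_\theta Y_t)^+)\uparrow+\infty$, contradicting $\varphi(t,(\delta_\theta Y_t)^+)\leq M_t<+\infty$; thus $\p(Y_t>Y'_t)=0$. Applying this along a sequence $\theta_n\uparrow1$ and invoking path-continuity yields $Y_t\leq Y'_t$ for all $t\in\T$.

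The hard part will be the integrability bookkeeping: for each fixed $\theta$ one must verify that $(\varphi(t,(\delta_\theta Y_t)^+ +\int_0^t\bar f_s\,{\rm d}s))_{t\in\T}$ is of class (D) (so that \cref{pro:3.1} is applicable) and that $M_t<+\infty$. Both reduce to the integrability of $\varphi(T,\cdot)$ evaluated at $(\xi')^+ +\int_0^T\bar f_s\,{\rm d}s$ and at $(\delta_\theta Y_\tau)^+\leq(|Y_\tau|+|Y'_\tau|)/(1-\theta)$. Since $\varphi(T,\cdot)$ grows like the Young function of the ambient Orlicz space, the multiplicative factor $1/(1-\theta)$ and the additive integral only shift the admissible growth exponent $\mu$ by a finite amount, and this is absorbed precisely because the class (D) and the integrability of $\int_0^T f_s\,{\rm d}s$ are assumed for \emph{every} $\mu>0$. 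The term $\bar\beta\int_0^T(|Y_s|+|Y'_s|)\,{\rm d}s$ is the most delicate: I would control it by Jensen's inequality applied to the convex Young function, together with the fact that the class (D) hypotheses on $Y$ and $Y'$ furnish a uniform bound $\sup_{\tau}\E[\,\varphi(T,|Y_\tau|)\,]<+\infty$ for every exponent, which is exactly where the ``for each $\mu>0$'' assumption is indispensable.
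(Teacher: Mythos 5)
Your proposal is correct and follows essentially the same route as the paper's proof: the $\theta$-difference technique reducing the problem to \cref{pro:3.1} with the shifted driver $\bar f_s=f_s+\bar\beta(|Y_s|+|Y'_s|)$, the very same test functions borrowed from the proofs of \cref{thm:2.3}(iii) and \cref{thm:2.6}, and the same integrability bookkeeping resting on the ``for each $\mu>0$'' hypotheses. The only cosmetic differences are that the paper bounds $(\delta_\theta U_T)^+$ by $\xi^+$ rather than $(\xi')^+$ and concludes by writing $(Y_t-\theta Y'_t)^+\leq(1-\theta)\E[\varphi(T,\xi^++\int_0^T\bar f_s\,{\rm d}s)|\F_t]$ and letting $\theta\uparrow 1$, which is equivalent to your divergence-by-contradiction argument.
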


\begin{rmk}\label{rmk:3.6}
Assertions in \cref{thm:3.6} were given in \cite{FanHuTang2023SPA,FanHu2021SPA,FanHuTang2020CRM}, respectively. Furthermore, let us suppose that the generator $g$ satisfies assumption (EX3) with $h(\cdot,\cdot,\cdot)$ being defined in \eqref{eq:2.7} for $\delta=0$, $\lambda=0$, $\alpha=2$. For the case of the bounded terminal condition, it has been shown in \cite{Fan2016SPA} and \cite{Kobylanski2000AP} that in order to ensure that the (strictly) comparison result in \cref{thm:3.6} holds, the assumption (UN3) can be weakened such that the generator $g$ further satisfies assumption (UN1) and the following locally Lipschitz condition in $z$: $\as$, for each $(y,z_1,z_2)\in \R\times\R^{1\times d}\times\R^{1\times d}$,
\begin{equation}\label{eq:3.12*}
|g(\omega,t,y,z_1)-g(\omega,t,y,z_2)|\leq \gamma (1+|z_1|^\delta+|z_2|^\delta)|z_1-z_2|
\end{equation}
with $\delta\in [0,1]$ and $\gamma>0$. For the case of the unbounded terminal condition, it has been shown in \cite{FanHuTang2020CRM} that in order to ensure that the comparison result in \cref{thm:3.6} holds, the assumption (UN3) can be weakened such that the generator $g$ is uniformly Lipschitz continuous in $y$ and further satisfies \eqref{eq:3.12*} with $\delta\in [0,1)$ and an additional strictly positive/negative quadratic condition of the generator $g$ in $z$, see assumptions (H3) and (H3') in \cite{FanHuTang2020CRM} for more details.\vspace{0.2cm}
\end{rmk}

\begin{proof}[Proof of \cref{thm:3.6}]
(i) We first consider the case when the generator $g$ satisfies \eqref{eq:3.7} in assumption (UN3), and $\as$,
\begin{equation}\label{eq:3.9}
g(t,Y'_t,Z'_t)\leq g'(t,Y'_t,Z'_t).
\end{equation}
The $\theta$-difference technique put forward initially in Briand and Hu~\cite{BriandHu2008PTRF} will be used in the following argument. For each fixed $\theta\in (0,1)$, define
\begin{equation}\label{eq:3.10}
\delta_\theta  U:=\frac{Y-\theta Y'}{1-\theta}\ \  {\rm and} \ \ \delta_\theta  V:=\frac{Z-\theta Z'}{1-\theta}.
\end{equation}
Then the pair $(\delta_\theta  U_t,\delta_\theta  V_t)_{t\in\T}$ verifies the following BSDE:
\begin{equation}\label{eq:3.11}
  \delta_\theta  U_t=\delta_\theta  U_T +\int_t^T \delta_\theta g (s,\delta_\theta  U_s,\delta_\theta  V_s) {\rm d}s-\int_t^T \delta_\theta  V_s \cdot {\rm d}B_s, \ \ \ \ t\in\T,
\end{equation}
where $\ass$,
\begin{equation}\label{eq:3.12}
\begin{array}{l}
\delta_\theta g (s,\delta_\theta  U_s,\delta_\theta  V_s)\vspace{0.1cm}\\
\ \ \ \ \Dis := \frac{(g(s, Y_s, Z_s)-\theta g(s, Y'_s, Z'_s))+\theta (g(s,Y'_s, Z'_s)-g'(s,Y'_s, Z'_s))}{1-\theta}.
\end{array}
\end{equation}
It follows from the assumptions that $\ass$, for each $(y,z)\in \R\times \R^d$,
\begin{equation}\label{eq:3.13}
{\bf 1}_{\delta_\theta U_s>0} \ \delta_\theta g (s,\delta_\theta  U_s,\delta_\theta  V_s)\leq \bar f_s+h(s, (\delta_\theta U_s)^+, |\delta_\theta V_s|)
\end{equation}
with
$$\bar f_s:=f_s+\beta (|Y_s|+|Y'_s|).$$
On the other hand, for each $k\geq e$ sufficient large, $c_1\geq 1$ and $c_2\geq (p+1)\beta-4^{\lambda^+}\gamma^2$, define the following function
$$
\varphi(s,x)=(k+x)\exp\left(c_1\exp(c_2 s) \left(\ln(k+x)\right)^p\right),\ \ (s,x)\in \T\times\R_+.
$$
Since $0\leq \varphi(s,x)\leq K x\exp\left(K\left(\ln(e+x)\right)^p\right)$ for each $(s,x)\in \T\times\R_+$ and some positive constant $K>0$ depending only on $(k,T)$, according to the assumptions it is not hard to verify that the process $\{\varphi(t, (\delta_\theta U_t)^+ +\int_0^t \bar f_s {\rm d}s)\}_{t\in\T}$ is of class (D). On the other hand, by a similar analysis to that in (iii) of the proof of \cref{thm:2.3} we can conclude that the last function $\varphi(\cdot,\cdot)$ satisfies \eqref{eq:2.1}, and hence is a test function for $h$ defined in \eqref{eq:2.7} with $\alpha=1$ and $\lambda\geq 0$. It then follows from \cref{pro:3.1} that
\begin{equation}\label{eq:3.14}
\begin{array}{lll}
(\delta_\theta U_t)^+ &\leq &\Dis \varphi\left(t, (\delta_\theta U_t)^+ +\int_0^t \bar f_s {\rm d}s\right)\\
&\leq& \Dis \E\left[\left.\varphi\left(T, (\delta_\theta U_T)^+ +\int_0^T \bar f_s {\rm d}s \right)\right|\F_t\right],\ \ t\in\T.
\end{array}
\end{equation}
Moreover, since
\begin{equation}\label{eq:3.15}
\delta_\theta  U_T^+=\frac{(\xi-\theta \xi')^+}{1-\theta}=\frac{\left[\xi-\theta \xi+\theta(\xi-\xi')\right]^+}{1-\theta}\leq \xi^+,
\end{equation}
it follows that
$$
(Y_t-\theta Y'_t)^+\leq (1-\theta)\E\left[\left.\varphi\left(T, \xi^+ +\int_0^T \bar f_s {\rm d}s \right)\right|\F_t\right],\ \ t\in\T.\vspace{0.2cm}
$$
Thus, the desired assertion follows by sending $\theta$ to $1$ in the last inequality.\vspace{0.2cm}

For the case that \eqref{eq:3.9} holds and the generator $g$ satisfies \eqref{eq:3.8}, we need to use $\theta Y-Y'$ and $\theta Z-Z'$, respectively, instead of $Y-\theta Y'$ and $Z-\theta Z'$ in \eqref{eq:3.10}. In this case, the generator $\delta_\theta  g$ in \eqref{eq:3.11} and \eqref{eq:3.12} should be
$$
\delta_\theta g (s,\delta_\theta  U_s,\delta_\theta  V_s):= \frac{(\theta g(s, Y_s, Z_s)-g(s,Y'_s, Z'_s))+(g(s,Y'_s, Z'_s)-g'(s,Y'_s, Z'_s))}{1-\theta}.
$$
It follows from \eqref{eq:3.8} that the generator $\delta_\theta g$ still satisfies \eqref{eq:3.13}. Consequently, \eqref{eq:3.14} still holds.
Moreover, by using
\[
\delta_\theta U_T^+=\frac{(\theta \xi-\xi')^+}{1-\theta}=\frac{\left[\theta \xi- \xi+(\xi-\xi')\right]^+}{1-\theta}\leq (-\xi)^+=\xi^-
\]
instead of \eqref{eq:3.15}, by virtue of \eqref{eq:3.14} we deduce that
$$
(\theta Y_t-Y'_t)^+\leq (1-\theta)\E\left[\left.\varphi\left(T, \xi^- +\int_0^T \bar f_s {\rm d}s \right)\right|\F_t\right],\ \ t\in\T.
$$
Thus, the desired assertion follows by sending $\theta$ to $1$ in the last inequality.\vspace{0.2cm}

Finally, in the same way we can prove the desired assertion under the conditions that the generator $g'$ satisfies assumption (UN3) and $\as$, $g(t,Y_t,Z_t)\leq g'(t,Y_t,Z_t)$.\vspace{0.2cm}

(ii) The desired assertion can be proved in the same way as in (i). The only difference lies in that the test function used in (i) needs to be replaced with those used, respectively, in (i) and (ii) of the proof of \cref{thm:2.6} for two different cases of $\alpha=2$ and $\alpha\in (1,2)$.
\end{proof}

\begin{rmk}\label{rmk:3.8}
Some key ideas in the proof of Theorems~\ref{thm:3.3} and \ref{thm:3.6} go back to \cite{BriandHu2006PTRF,Fan2016SPA,FanHu2021SPA,FanHuTang2023SCL,
FanHuTang2024SCL2}.
\end{rmk}

\subsection{Existence and uniqueness\vspace{0.2cm}}

Using Theorems~\ref{thm:2.3}, \ref{thm:2.6}, \ref{thm:3.3} and \ref{thm:3.6}, we easily have the following two existence and uniqueness results, whose proofs are omitted here.

\begin{thm}\label{thm:3.12}
Assume that $\xi$ is an $\F_T$-measurable random variable and the generator $g$ satisfies (EX1)-(EX3) with $h(\cdot,\cdot,\cdot)$ being defined in \eqref{eq:2.7} for $\alpha=1$. Then, the following assertions hold.

(i) Let $\delta=0$ and $\lambda\in (-\infty,-{1\over 2})$. If $\xi+\int_0^T f_s{\rm d}s\in L^1$ and the generator $g$ further satisfies (UN1) and (UN2), then BSDE$(\xi,g)$ admits a unique solution $(Y_t,Z_t)_{t\in \T}$ such that $(Y_t)_{t\in\T}$ is of class (D);

(ii) Let $\delta=0$ and $\lambda=-{1\over 2}$. If $\xi+\int_0^T f_s{\rm d}s\in L(\ln L)^p$ for some $p>0$ and the generator $g$ further satisfies (UN1) and (UN2), then BSDE$(\xi,g)$ admits a unique solution $(Y_t,Z_t)_{t\in \T}$ such that the process $(|Y_t|(\ln (e+|Y_t|))^p)_{t\in\T}$ is of class (D);

(iii) Let $p:=\delta\vee (\lambda+{1\over 2})\vee (2\lambda)\in (0,+\infty)$. If $\xi+\int_0^T f_s{\rm d}s\in \cap_{\mu>0} L\exp[\mu(\ln L)^p]$ and the generator $g$ further satisfies (UN1) and (UN2) for the case of $\lambda\in (-{1\over 2}, 0]$ and (UN3) for the case of $\lambda\in [0,+\infty)$, then BSDE$(\xi,g)$ admits a unique solution $(Y_t,Z_t)_{t\in \T}$ such that the process $(|Y_t|\exp (\mu(\ln (e+|Y_t|))^p))_{t\in\T}$ is of class (D) for each $\mu>0$;

(iv) Let $\delta=0$ and $\lambda=0$. If $\xi+\int_0^T f_s{\rm d}s\in L^p$ for some $p>1$ and $g$ further satisfies (UN1) and (UN2), then BSDE$(\xi,g)$ admits a unique solution $(Y_t,Z_t)_{t\in \T}$ such that $(|Y_t|^p)_{t\in\T}$ is of class (D).\vspace{0.1cm}
\end{thm}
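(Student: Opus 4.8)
The plan is to split the argument into an existence half and a uniqueness half, obtaining the former directly from the matching part of \cref{thm:2.3} and the latter from the matching comparison theorem (\cref{thm:3.3} or \cref{thm:3.6}) applied twice, with the two solutions playing symmetric roles.

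Existence requires no fresh work. The standing hypotheses of the present statement already include (EX1)--(EX3) with $h$ as in \eqref{eq:2.7} for $\alpha=1$, which are precisely the hypotheses of \cref{thm:2.3}. Hence, under the integrability imposed on $\xi+\int_0^T f_s{\rm d}s$ in each of (i)--(iv), \cref{thm:2.3} furnishes a solution $(Y_t,Z_t)_{t\in\T}$ lying in exactly the class-(D)--type space named in the corresponding assertion; the extra assumptions (UN1)--(UN2) (or (UN3)) are not used here.

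For uniqueness, let $(Y_t,Z_t)_{t\in\T}$ and $(Y'_t,Z'_t)_{t\in\T}$ be two solutions of BSDE$(\xi,g)$, both lying in the prescribed space. I would apply the appropriate comparison theorem with $g'=g$ and $\xi'=\xi$. Because the two generators coincide, the sign condition \eqref{eq:3.2} (respectively its analogue in \cref{thm:3.6}) collapses to ${\bf 1}_{Y_t>Y'_t}\cdot 0=0\leq 0$ and is automatic, while $\xi\leq\xi'$ holds trivially. In cases (i), (ii), (iv), and in case (iii) when $\lambda\in(-{1\over 2},0]$, the class-(D) membership demanded of both $Y$ and $Y'$ by conditions (i)--(iv) of \cref{thm:3.3} is exactly the space hypothesis imposed here, so \cref{thm:3.3} gives $Y_t\leq Y'_t$ for all $t\in\T$; in case (iii) with $\lambda\in[0,+\infty)$ one invokes instead \cref{thm:3.6}(i), whose integrability requirements on $\int_0^T f_s{\rm d}s$, $Y$ and $Y'$ coincide with those stated. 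Exchanging the roles of $(Y,Z)$ and $(Y',Z')$ and repeating yields $Y'_t\leq Y_t$, whence $Y_t=Y'_t$ for every $t\in\T$.

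Finally, subtracting the two copies of \eqref{eq:1} and using $Y=Y'$ gives $\int_t^T(Z_s-Z'_s)\cdot{\rm d}B_s=\int_t^T\bigl(g(s,Y_s,Z_s)-g(s,Y_s,Z'_s)\bigr){\rm d}s$ for all $t$; the left side is a continuous local martingale and the right side is of finite variation, so both vanish, and taking quadratic variation forces the equality $Z=Z'$, $\as$ The only genuine point of care is the bookkeeping that matches, case by case, the exact class-(D)/Orlicz-type integrability of the candidate solutions to the hypotheses of \cref{thm:3.3,thm:3.6} --- in particular selecting the correct comparison theorem in the two overlapping regimes $\lambda\in(-{1\over 2},0]$ and $\lambda\in[0,+\infty)$ of part (iii); I expect no analytic difficulty beyond that, since the genuine work has already been carried out in the existence and comparison theorems.
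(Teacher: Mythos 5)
Your proposal is correct and follows exactly the route the paper intends: the paper omits the proof of \cref{thm:3.12}, stating only that it follows easily from \cref{thm:2.3,thm:3.3,thm:3.6}, which is precisely your decomposition into existence (from \cref{thm:2.3}) plus uniqueness (applying the relevant comparison theorem twice with $g'=g$, $\xi'=\xi$, then recovering $Z=Z'$ from the martingale/finite-variation argument). Your case-by-case matching of the class-(D) hypotheses, including the observation that the exponent $p\geq\lambda+\tfrac12$ in part (iii) dominates the exponent required by \cref{thm:3.3}(iii), is the only bookkeeping needed and is handled correctly.
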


\begin{rmk}\label{rmk:3.13*}
Under a slightly stronger growth condition than (EX2), Assertions (i), (iii) with $\lambda\in [0, +\infty)$, and (iv) of \cref{thm:3.12} were given in \cite{FanHuTang2023SCL,FanHuTang2023SPA,Fan2016SPL,Fan2016SPA}, respectively. However, for $\lambda\in [-\frac{1}{2},0)$, Assertions (ii) and (iii) of \cref{thm:3.12} seem to be new.\vspace{0.1cm}
\end{rmk}

\begin{thm}\label{thm:3.13}
Assume that $\xi$ is an $\F_T$-measurable random variable and the generator $g$ satisfies assumptions (EX1)-(EX3) and (UN3) with $h(\cdot,\cdot,\cdot)$ being defined in \eqref{eq:2.7} for $\delta=0$, $\lambda=0$ and $\alpha\in (1,2]$. If $\xi+\int_0^T f_s{\rm d}s\in \cap_{\mu>0}\exp(\mu L^{2\over \alpha^*})$ with $\alpha^*$ being the conjugate of $\alpha$, then BSDE$(\xi,g)$ admits a unique solution $(Y_t,Z_t)_{t\in \T}$ such that the process $(\exp(\mu |Y_t|^{2\over \alpha^*}))_{t\in\T}$ is of class (D) for each $\mu>0$. In particular, if $\xi+\int_0^T f_s{\rm d}s\in L^{\infty}(\F_T)$, then BSDE$(\xi,g)$ admits a unique solution $(Y_t,Z_t)_{t\in \T}$ such that $(Y_t)_{t\in\T}\in \s^{\infty}(\T;\R)$.\vspace{0.1cm}
\end{thm}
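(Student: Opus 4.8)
The plan is to obtain \cref{thm:3.13} as a direct corollary of the existence result \cref{thm:2.6} and the comparison result \cref{thm:3.6}(ii), exactly as the sentence preceding the statement suggests. The first thing I would check is that the standing hypotheses here, namely (EX2), (EX3) and (UN3), actually supply all of (EX1)--(EX3) needed to invoke \cref{thm:2.6}. Assumptions (EX2) and (EX3) are assumed outright; assumption (EX1) is not, but by the last sentence of \cref{rmk:3.5} a generator satisfying (UN3) is necessarily locally Lipschitz continuous in $(y,z)$, hence continuous in $(y,z)$, so (EX1) holds automatically. With (EX1)--(EX3) in force and $h$ of the form \eqref{eq:2.7} with $\delta=0$, $\lambda=0$ and $\alpha\in(1,2]$, \cref{thm:2.6} applies verbatim.

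For existence I would simply quote \cref{thm:2.6}: since $\xi+\int_0^T f_s{\rm d}s\in \cap_{\mu>0}\exp(\mu L^{2/\alpha^*})$, it produces a solution $(Y_t,Z_t)_{t\in\T}$ for which $(\exp(\mu|Y_t|^{2/\alpha^*}))_{t\in\T}$ is of class (D) for every $\mu>0$; and in the bounded case $\xi+\int_0^T f_s{\rm d}s\in L^\infty$ the ``in particular'' clause of \cref{thm:2.6} already yields $(Y_t)_{t\in\T}\in\s^\infty(\T;\R)$. This is the entire content of the existence half.

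For uniqueness, let $(Y,Z)$ and $(Y',Z')$ be any two solutions lying in the asserted class, i.e. with $(\exp(\mu|Y_t|^{2/\alpha^*}))_{t\in\T}$ and $(\exp(\mu|Y'_t|^{2/\alpha^*}))_{t\in\T}$ of class (D) for each $\mu>0$. I would apply \cref{thm:3.6}(ii) with $\xi=\xi'$ and $g=g'$: then $\xi\le\xi'$ holds with equality, the sign condition $g(t,Y'_t,Z'_t)\le g'(t,Y'_t,Z'_t)$ is trivially satisfied, and the class (D) conditions hold by assumption. The integrability of $\int_0^T f_s{\rm d}s$ demanded by \cref{thm:3.6}(ii) is recovered from the hypothesis on $\xi+\int_0^T f_s{\rm d}s$ together with the integrability of $\xi=Y_T$ extracted from the class (D) property at $t=T$ (here one uses $2/\alpha^*\le 1$). \cref{thm:3.6}(ii) then gives $Y_t\le Y'_t$ for all $t\in\T$; exchanging the roles of the two solutions gives $Y'_t\le Y_t$, whence $Y\equiv Y'$. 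Finally, since $Y$ and $Y'$ are indistinguishable continuous semimartingales, the uniqueness of the semimartingale (local-martingale plus finite-variation) decomposition forces $\int_0^\cdot Z_s\cdot{\rm d}B_s=\int_0^\cdot Z'_s\cdot{\rm d}B_s$, and hence $Z=Z'$, $\as$ The bounded case is identical, noting that $Y,Y'\in\s^\infty(\T;\R)$ force $\xi$ and hence $\int_0^T f_s{\rm d}s$ to be bounded, so that the ``in particular'' clause of \cref{thm:3.6}(ii) applies.

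The proof is thus a short assembly of earlier results, so there is no serious analytical obstacle; the only points that genuinely need attention are the two bookkeeping verifications above --- that (UN3) delivers (EX1) (so that \cref{thm:2.6} is applicable at all), and that the integrability and class (D) hypotheses of \cref{thm:3.6}(ii) are met by precisely the solutions produced by \cref{thm:2.6}, so that the comparison argument runs inside the claimed solution class rather than merely in some larger space.
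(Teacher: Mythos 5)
Your proposal is correct and follows exactly the route the paper intends: the paper omits the proof of \cref{thm:3.13}, stating only that it follows from \cref{thm:2.6} (existence) and \cref{thm:3.6} (comparison, applied twice with $\xi=\xi'$ and $g=g'$ to get uniqueness), which is precisely your assembly. Your two bookkeeping checks --- that (UN3) supplies (EX1) via the last sentence of \cref{rmk:3.5}, and that the integrability and class (D) hypotheses of \cref{thm:3.6}(ii) are met by the solutions produced by \cref{thm:2.6} --- usefully fill in the details the paper leaves implicit.
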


\begin{rmk}\label{rmk:3.14*}
Under a slightly stronger growth condition than (EX2), Assertions in \cref{thm:3.13} were given in \cite{FanHu2021SPA,BriandHu2006PTRF,BriandHu2008PTRF,
FanHu2019ECP,Kobylanski2000AP}, respectively.\vspace{0.1cm}
\end{rmk}

We depict \cref{thm:3.12,thm:3.13} in the following Table 3: existence and uniqueness results in \cref{thm:3.12,thm:3.13}.

\begin{center}
\captionof{table}{existence and uniqueness results in \cref{thm:3.12,thm:3.13}}\vspace{0.2cm}
{\small
\begin{tabular}
{
|>{\centering\arraybackslash}m{2.1cm}
|>{\centering\arraybackslash}m{3cm}
|>{\centering\arraybackslash}m{3.3cm}
|>{\centering\arraybackslash}m{2.4cm}
|>{\centering\arraybackslash}m{2.8cm}|}

\hline

\parbox[c][1.3cm]{2.1cm}{\centering Case}& Space of $\xi+\int_0^T f_s {\rm d}s$ & Space of solution & Condition on $g$ & Existence and uniqueness\\

\hline

\makecell{$\delta=0$\\ $\alpha=1$\\ $\lambda\in (-\infty,-{1\over 2})$} & $L^1$ & $Y$ is of class (D) & (EX1)-(EX3), (UN1)-(UN2)&
\parbox[c][2cm]{2.8cm}{\centering \cref{thm:3.12} (i)} \\

\hline

\makecell{$\delta=0$\\ $\alpha=1$\\ $\lambda=-{1\over 2}$} & \makecell{$L(\ln L)^p$\\ for some $p>0$} & \makecell{$|Y|(\ln (e+|Y|))^p$ \\ is of class (D)} & (EX1)-(EX3), (UN1)-(UN2) &
\parbox[c][2cm]{2.8cm}{\centering \cref{thm:3.12} (ii)} \\

\hline

\makecell{$\delta\in [0,1]$\\ $\alpha=1$\\ $\lambda\in (-{1\over 2},0]$} & $\cap_{\mu>0}L\exp[\mu(\ln L)^p]$ with $p:=\delta\vee (\lambda+{1\over 2})\vee (2\lambda)$ & $\cap_{\mu>0}|Y|\exp[\mu(\ln |Y|)^p]$ is of class (D) & \parbox[c][2cm]{2.4cm}{\centering (EX1)-(EX3), (UN1)-(UN2)} & \multirow{4}{*}{\parbox{2.8cm}{\centering \cref{thm:3.12} (iii)} }\\

\cline{1-4}

\makecell{$\delta\in [0,1]$\\ $\alpha=1$\\ $\lambda\in [0,+\infty)$} & $\cap_{\mu>0}L\exp[\mu(\ln L)^p]$ with $p:=\delta\vee (\lambda+{1\over 2})\vee (2\lambda)$ & $\cap_{\mu>0}|Y|\exp[\mu(\ln |Y|)^p]$ is of class (D) & \parbox[c][2cm]{2.4cm}{\centering (EX1)-(EX3), (UN3)} & \\

\hline

\makecell{ $\delta=0$ \\ $\alpha=1$\\ $\lambda=0$} & $L^p$ for some $p>1$ & $|Y|^p$ is of class (D) & (EX1)-(EX3), (UN1)-(UN2) & \parbox[c][2cm]{2.8cm}{\centering \cref{thm:3.12} (iv)} \\

\hline

\multirow{4}{*}{\parbox{2.5cm}{\centering $\delta=0$\\ $\alpha\in (1,2]$\\ $\lambda=0$}} & $\cap_{\mu>0}\exp(\mu L^{2\over \alpha^*})$ with $\alpha^*$ being the conjugate of $\alpha$ & \makecell{$\cap_{\mu>0}\exp(\mu |Y|^{2\over \alpha^*})$\\ is of class (D)} & \multirow{4}{*}{\parbox{2.4cm} {\centering (EX1)-(EX3),\\ (UN3)}} & \multirow{4}{*}{\parbox{2.8cm}{\centering \cref{thm:3.13}}} \\

\cline{2-3}

& \parbox[c][1.5cm]{3cm}{\centering $L^{\infty}$} & $Y\in \s^{\infty}(\T;\R)$& & \\
\hline
\end{tabular}
}\vspace{0.2cm}
\end{center}

\section{Applications}
\label{sec:4-Application}
\setcounter{equation}{0}

In this section, we will introduce some applications of our theoretical results obtained in the last two sections, which are enlightened by for example \cite{Peng1990SICON,Peng1991Stochastics,Peng1997Book,BriandHu2008PTRF,
Jia2008PHDThesis,FanHu2021SPA,FanHuTang2023SCL,FanHuTang2023SPA}.

\subsection{The (conditional) $g$-expectation defined on $L^1(\F_T)$\vspace{0.2cm}}

First of all, we extend the notion of (conditional) $g$-expectation of \cite{Peng1997Book} defined on the space  $L^2(\F_T)$ of squarely integrable random variables to the larger one $L^1(\F_T)$ of integrable random variables.

\begin{defn}\label{def:4.1}
Let the generator $g$ satisfy assumptions (EX1)-(EX2) and (UN1)-(UN2) with $\lambda\in (-\infty,-{1\over 2})$ and $\int_0^T f_s {\rm d}s\in L^1$. Assume further that $g$ satisfies the following assumption:
\begin{equation}\label{eq:4.1}
\as,\ \ \ g(\omega,t,y,0)\equiv 0,\ \ \RE y\in \R.
\end{equation}
By virtue of (i) in \cref{thm:3.12}, for each $\xi\in L^1(\F_T)$ and $t\in\T$, we can denote the conditional $g$-expectation $\ecal_g[\xi|\F_t]$ of $\xi$ with respect to $\F_t$ by the following formula:
\begin{equation}\label{eq:4.2}
\ecal_g[\xi|\F_t]:=Y_t^\xi,
\end{equation}
where $(Y_t^\xi,Z_t^\xi)_{t\in\T}$ is the unique solution of BSDE$(\xi,g)$ such that $Y_\cdot^\xi$ belongs to class (D). In particular, we call $\ecal_g[\xi]:=\ecal_g[\xi|\F_0]$ the $g$-expectation of $\xi$.\vspace{0.1cm}
\end{defn}

It is clear that the conditional $g$-expectation operator $\ecal_g[\cdot|\F_t]$ defined by \eqref{eq:4.2} maps $L^1(\F_T)$ to $L^1(\F_t)$ for each $t\in\T$, which shares the same domain with the classical mathematical expectation operator. Furthermore, proceeding identically  as  in \cite{Peng1999PTRF} and \cite{Jiang2008AAP}, from (i) of \cref{thm:3.3} and (i) of \cref{thm:3.12} together with \eqref{eq:4.1} we easily (thus omitting the proof) have the following two propositions.\vspace{0.1cm}

\begin{pro}\label{pro:4.2}
$\ecal_g[\cdot]$ possesses the following properties:
\begin{itemize}
\item [(i)] {\bf Preserving  of constants}: For each constant $c\in\R$, $\ecal_g[c]=c$;
\item [(ii)] {\bf Monotonicity}: For each $\xi_1,\xi_2\in L^1(\F_T)$, if $\xi_1\geq \xi_2$, then $\ecal_g[\xi_1]\geq \ecal_g[\xi_2]$.
\end{itemize}
\end{pro}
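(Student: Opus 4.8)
The plan is to treat the two assertions separately, each as a short corollary of the well-posedness theory already in place. For (i), I would produce the explicit candidate $(Y_t,Z_t)=(c,0)_{t\in\T}$ and check directly that it solves BSDE$(c,g)$ in the sense of \eqref{eq:1}: with $Z\equiv 0$ the stochastic integral drops out, and since the standing hypothesis \eqref{eq:4.1} gives $g(s,c,0)\equiv 0$, the Lebesgue integral drops out as well, leaving the trivial identity $c=c$ for every $t\in\T$. A constant process is bounded and hence of class (D), so $(c,0)$ lies in the solution class singled out in (i) of \cref{thm:3.12}; invoking the uniqueness there forces $Y^c_\cdot\equiv c$, whence $\ecal_g[c]=Y^c_0=c$.

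For (ii), the natural route is the comparison theorem (i) of \cref{thm:3.3} applied with one and the same generator on both sides. I would set $\xi=\xi_2$ and $\xi'=\xi_1$ (so that $\xi\le\xi'$ follows from $\xi_2\le\xi_1$) and take $g'=g$. Then the cross condition \eqref{eq:3.2} collapses to ${\bf 1}_{Y_t>Y'_t}\,(g(t,Y'_t,Z'_t)-g(t,Y'_t,Z'_t))=0\le 0$, which holds automatically. The two solution processes $Y^{\xi_1}$ and $Y^{\xi_2}$ are of class (D) by the very definition \eqref{eq:4.2}, so all hypotheses of part (i) of \cref{thm:3.3} (the regime $\lambda<-1/2$) are met; this yields $Y^{\xi_2}_t\le Y^{\xi_1}_t$ for every $t\in\T$, and specialising to $t=0$ gives $\ecal_g[\xi_2]\le\ecal_g[\xi_1]$, which is the claimed monotonicity.

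Since each assertion reduces to an application of an already-proven result, I do not anticipate any genuine analytic difficulty; the only real care lies in the bookkeeping of the admissibility (class (D)) hypotheses, which are guaranteed by construction in \cref{def:4.1}. The one point I would emphasise is exactly where hypothesis \eqref{eq:4.1} enters: it is precisely $g(\omega,t,y,0)\equiv 0$ that makes the constant pair $(c,0)$ a solution, and without it constant-preservation would fail. This is the delicate step to flag, even though the verification behind it is immediate.
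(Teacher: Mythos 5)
Your proposal is correct and follows exactly the route the paper indicates (the proof is omitted there but explicitly attributed to (i) of \cref{thm:3.12}, (i) of \cref{thm:3.3} and the normalization \eqref{eq:4.1}): constant preservation via the explicit solution $(c,0)$ plus uniqueness, and monotonicity via the comparison theorem with $g'=g$ so that \eqref{eq:3.2} holds trivially. Your remark that \eqref{eq:4.1} is precisely what makes $(c,0)$ a solution is the right point to flag, and the class (D) bookkeeping is handled correctly.
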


\begin{pro}\label{pro:4.3}
For each $t\in\T$, $\ecal_g[\cdot|\F_t]$ possesses the following properties:
\begin{itemize}
\item [(i)] If $\xi\in L^1(\F_t)$, then $\ecal_g[\xi|\F_t]=\xi$;
\item [(ii)] For each $\xi_1,\xi_2\in L^1(\F_T)$, if $\xi_1\geq \xi_2$, then $\ecal_g[\xi_1|\F_t]\geq \ecal_g[\xi_2|\F_t]$;
\item [(iii)] For each $\xi\in L^1(\F_T)$ and $r\in [0,T]$, we have $\ecal_g[\ecal_g[\xi|\F_t]|\F_r]=\ecal_g[\xi|\F_{t\wedge r}]$;
\item [(iv)] For each $A\in \F_t$ and $\xi\in L^1(\F_T)$, $\ecal_g[{\bf 1}_A\xi|\F_t]={\bf 1}_A\ecal_g[\xi|\F_t]$ and $\ecal_g[{\bf 1}_A\xi]=\ecal_g[{\bf 1}_A\ecal_g[\xi|\F_t]]$.\vspace{0.1cm}
\end{itemize}
\end{pro}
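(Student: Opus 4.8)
The plan is to derive all four properties from three ingredients already in hand: the existence-and-uniqueness result \cref{thm:3.12}(i), the comparison theorem \cref{thm:3.3}(i), and the normalization \eqref{eq:4.1}. Throughout, for an integrable terminal variable $\eta$ I write $(Y^\eta,Z^\eta)$ for the class-(D) solution of BSDE$(\eta,g)$ furnished by \cref{thm:3.12}(i), so that $\ecal_g[\eta|\F_s]=Y^\eta_s$. The backbone of the argument is a \emph{restriction (flow) property}: since the defining equation
\[
Y^\eta_s=\eta+\int_s^T g(r,Y^\eta_r,Z^\eta_r)\,{\rm d}r-\int_s^T Z^\eta_r\cdot {\rm d}B_r
\]
holds for every $s\in\T$, the restriction of $(Y^\eta,Z^\eta)$ to any subinterval $[a,b]\subseteq\T$ is itself a class-(D) solution on $[a,b]$ (class (D) is inherited by restriction); because \cref{thm:3.12}(i) applies verbatim on any subinterval (with the terminal time shortened and $\int_0^T f_s\,{\rm d}s$ replaced by the correspondingly smaller integral), this restriction is the \emph{unique} such solution. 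I would isolate this as a preliminary lemma, since it is invoked repeatedly below.

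First I would prove (i). If $\xi\in L^1(\F_t)$, then $(\xi,0)$ is adapted on $[t,T]$, belongs to class (D), and by \eqref{eq:4.1} satisfies $\xi=\xi+\int_s^T g(r,\xi,0)\,{\rm d}r-\int_s^T 0\cdot {\rm d}B_r$ for $s\in[t,T]$; hence it solves BSDE$(\xi,g)$ on $[t,T]$. By the uniqueness part of the restriction property, $Y^\xi_s=\xi$ for all $s\in[t,T]$, so $\ecal_g[\xi|\F_t]=Y^\xi_t=\xi$. Property (ii) is immediate from comparison: applying \cref{thm:3.3}(i) with terminal data $\xi_2\le\xi_1$ and the \emph{same} generator $g=g'$ (so the cross term in \eqref{eq:3.2} vanishes identically) to the two class-(D) solutions yields $Y^{\xi_2}_t\le Y^{\xi_1}_t$, which is precisely monotonicity.

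For the tower property (iii) I would split on the sign of $t-r$. When $r\ge t$, $\ecal_g[\xi|\F_t]=Y^\xi_t\in L^1(\F_t)\subseteq L^1(\F_r)$, and (i) applied at time $r$ gives $\ecal_g[\ecal_g[\xi|\F_t]|\F_r]=Y^\xi_t=\ecal_g[\xi|\F_{t\wedge r}]$. When $r\le t$, set $\eta:=Y^\xi_t\in L^1(\F_t)$ and let $(\tilde Y,\tilde Z)=(Y^\eta,Z^\eta)$ be its class-(D) solution on $[0,T]$; by (i), $\tilde Y_s=\eta$ on $[t,T]$, so $\tilde Y_t=\eta=Y^\xi_t$. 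Then $(\tilde Y,\tilde Z)$ and $(Y^\xi,Z^\xi)$ both solve the BSDE on $[0,t]$ with the common terminal value $Y^\xi_t$, so the uniqueness part of the restriction property forces $\tilde Y_r=Y^\xi_r$, i.e.\ $\ecal_g[\ecal_g[\xi|\F_t]|\F_r]=Y^\xi_r=\ecal_g[\xi|\F_{t\wedge r}]$.

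Finally, for (iv), fix $A\in\F_t$ and consider $({\bf 1}_A Y^\xi_s,{\bf 1}_A Z^\xi_s)_{s\in[t,T]}$. Since ${\bf 1}_A$ is $\F_s$-measurable for $s\ge t$, it may be carried inside both integrals, and the algebraic identity ${\bf 1}_A\, g(r,Y^\xi_r,Z^\xi_r)=g(r,{\bf 1}_A Y^\xi_r,{\bf 1}_A Z^\xi_r)$ holds: on $A$ it is trivial, and on $A^c$ both sides vanish by \eqref{eq:4.1}. Hence $({\bf 1}_A Y^\xi,{\bf 1}_A Z^\xi)$ is a class-(D) solution on $[t,T]$ with terminal ${\bf 1}_A\xi$, and uniqueness gives $\ecal_g[{\bf 1}_A\xi|\F_t]={\bf 1}_A\ecal_g[\xi|\F_t]$; the second identity then follows by applying $\ecal_g[\cdot|\F_0]=\ecal_g[\cdot]$ to both sides and using (iii) with $r=0$. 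I expect the only genuine obstacle to be the careful statement of the restriction property on subintervals, in particular verifying that \cref{thm:3.12}(i) still applies when the horizon is shortened and that class (D) is inherited by restriction; once this is secured, (i)--(iv) follow from the comparison theorem, uniqueness, and \eqref{eq:4.1} with no further analytic work.
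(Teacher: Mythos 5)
Your argument is correct and is essentially the standard one the paper invokes: the paper omits the proof of \cref{pro:4.3}, stating that it proceeds identically as in Peng (1999) and Jiang (2008) from \cref{thm:3.3}(i), \cref{thm:3.12}(i) and \eqref{eq:4.1}, and that is exactly the combination of uniqueness on subintervals, comparison, and the normalization $g(\cdot,y,0)\equiv 0$ that you reconstruct. Your explicit isolation of the restriction/flow lemma and the pointwise identity ${\bf 1}_A\,g(r,Y_r,Z_r)=g(r,{\bf 1}_A Y_r,{\bf 1}_A Z_r)$ is precisely what those references do, so no further comment is needed.
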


It can be indicated from both propositions that the (conditional) $g$-expectation preserves essential properties (but except linearity) of the classical expectations. Some extensive issues on the (conditional) $g$-expectation still remain to be further studied along the lines \vspace{0.1cm} of~\cite{Peng1997Book,Chen1998CRAS,Peng1999PTRF,CoquetHuMeminPeng2002PTRF,
Peng2004AMAS,Peng2004CRAS,Peng2004LectureNotes,Jiang2008AAP,Delbaen2009MF}.

\begin{rmk}\label{rmk:4.3*}
In the same way as above, by \cref{thm:3.3,thm:3.6,thm:3.12,thm:3.13} one can define the (conditional) $g$-expectation via the solutions of BSDEs on the spaces
$$L(\ln L)^p\ (p>0),\ \ \bigcap\limits_{\mu>0}L\exp[\mu(\ln L)^p]\ (p>0),\ \ L^p\ (p>1), \ \ \bigcap\limits_{\mu>0} \exp(\mu L^{2\over \alpha^*})\  (\alpha^*\geq 2), $$
and $L^\infty$,
respectively. It is clear that the generator $g$ of BSDEs needs to satisfy some stronger conditions as the space becomes larger. In particular, when $g(t,y,z):\equiv \gamma |z|$, the corresponding conditional $g$-expectation $\ecal_g[\cdot|\F_t]$ for $t\in \T$ can be defined on the space $\bigcap_{\mu>0}L\exp[\mu\sqrt{\ln L}]$, which is bigger than $L^p\ (p>1)$ used for example in \cite{Peng1997Book,Chen1998CRAS,CoquetHuMeminPeng2002PTRF,Peng2004AMAS,
Peng2004CRAS,Peng2004LectureNotes,Tang2006CRA,Jia2008PHDThesis,Jia2010SPA}. Furthermore, according to (iii) of \cref{thm:3.3} and (iii) of \cref{thm:3.12}, we can verify that for each $t\in \T$ and $\xi\in \bigcap_{\mu>0}L\exp[\mu\sqrt{\ln L}]$,
$$
\ecal_g[\xi|\F_t]:=\esup\limits_{q\in \acal} \E_q[\xi|F_t]
$$
with $\acal$ being defined in \cref{ex:2.1}. This is just the maximal conditional expectation on $\acal$.\vspace{0.2cm}
\end{rmk}

\subsection{Dynamic utility process and risk measure\vspace{0.2cm}}

In the sequel, we introduce an application of our theoretical results in mathematical finance. For simplicity of  notations, we set for each $t\in \T$ and $\alpha\in (1,2]$
$$
E^\alpha(\F_t):=\bigcap_{\mu>0} \exp[\mu L^{2\over \alpha^*}](\F_t)
$$
with $\alpha^*:={\alpha\over \alpha-1}\geq 2$ being the conjugate of $\alpha$. Clearly, $E^\alpha(\F_t)$ is a linear space containing $L^\infty(\F_t)$ of bounded random variables. The following proposition is a direct consequence of (iii) of \cref{thm:3.12}, and the proof is omitted.

\begin{pro}\label{pro:4.4}
Suppose that the generator $g(z):\R^d\To \R$ is a concave function  satisfying $g(0)=0$ and
\begin{equation}\label{eq:4.3}
|g(z)|\leq a+\gamma |z|^\alpha
\end{equation}
with $a\geq0$ and $\alpha\in (1,2]$ being two given constants. Then, for each $\xi\in E^\alpha(\F_T)$, BSDE$(\xi,g)$ admits a unique solution $(Y_t,Z_t)_{t\in\T}$ such that $Y_t\in E^\alpha(\F_t)$ for each $t\in \T$.\vspace{0.1cm}
\end{pro}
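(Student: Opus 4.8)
The plan is to recognize $g$, regarded as a generator depending neither on $(\omega,t)$ nor on $y$, as a special case covered by \cref{thm:3.13}, and then to read the assertion off from the class-(D) conclusion of that theorem. Concretely, I would take $f_t\equiv a$, $\beta=0$ and $\delta=\lambda=0$ in \eqref{eq:2.7}, so that the relevant shape function is $h(t,x,\bar x)=\gamma\bar x^\alpha$, and verify that $g$ meets assumptions (EX1)--(EX3) together with (UN3) for these data. The integrability hypothesis of \cref{thm:3.13} then reduces to an exponential-moment bound on $\xi+aT$, which is immediate from $\xi\in E^\alpha(\F_T)$.

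First I would check the growth assumptions. Since ${\rm sgn}(y)g(z)\leq |g(z)|\leq a+\gamma|z|^\alpha$, assumption (EX3) holds with $f_t\equiv a$ and $h(t,x,\bar x)=\gamma\bar x^\alpha$. For (EX2), using $\alpha\in(1,2]$ one has $|z|^\alpha\leq 1+|z|^2$ for all $z\in\R^d$, whence $|g(z)|\leq (a+\gamma)+\gamma|z|^2$; thus the constant choices $H\equiv a+\gamma$ and $\Gamma\equiv\gamma$ are $\R_+$-valued, progressively measurable, nondecreasing in the last variable, left-continuous in $t$, and satisfy $\int_0^T H\,{\rm d}t+\sup_{t\in\T}\Gamma<+\infty$, so (EX2) holds. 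Assumption (EX1) is automatic, since a finite concave function on $\R^d$ is continuous (alternatively it follows from (UN3) via \cref{rmk:3.5}).

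Next I would establish (UN3). As $g$ depends only on $z$ and is concave there, it is concave as a function of $(y,z)$, so \cref{pro:3.4}(i) yields (UN3) with the above $h$ (note $\lambda=0\geq 0$). The remaining hypothesis of \cref{thm:3.13} is $\xi+\int_0^T f_s\,{\rm d}s=\xi+aT\in\bigcap_{\mu>0}\exp(\mu L^{2/\alpha^*})$. Because $\alpha^*\geq 2$ gives $2/\alpha^*\in(0,1]$, the map $x\mapsto x^{2/\alpha^*}$ is subadditive, so $(|\xi|+aT)^{2/\alpha^*}\leq |\xi|^{2/\alpha^*}+(aT)^{2/\alpha^*}$ and hence $\E[\exp(\mu|\xi+aT|^{2/\alpha^*})]\leq e^{\mu(aT)^{2/\alpha^*}}\,\E[\exp(\mu|\xi|^{2/\alpha^*})]<+\infty$ for every $\mu>0$; this is exactly $\xi+aT\in E^\alpha(\F_T)$. \cref{thm:3.13} then provides a unique solution $(Y_t,Z_t)_{t\in\T}$ for which $(\exp(\mu|Y_t|^{2/\alpha^*}))_{t\in\T}$ is of class (D) for each $\mu>0$.

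Finally I would translate this process-level statement into the claimed pointwise membership. For fixed $t\in\T$, the class-(D) property of $(\exp(\mu|Y_s|^{2/\alpha^*}))_{s\in\T}$ in particular forces $\E[\exp(\mu|Y_t|^{2/\alpha^*})]<+\infty$ for every $\mu>0$, i.e. $Y_t\in E^\alpha(\F_t)$; concavity of $g$ is what supplies (UN3) and hence the uniqueness, while existence rests only on (EX1)--(EX3) via \cref{thm:2.6}. The one genuine subtlety, and the point I would treat most carefully, is the reconciliation between the class-(D) solution space in which \cref{thm:3.13} asserts well-posedness and the pointwise space $\{Y:\ Y_t\in E^\alpha(\F_t)\ \forall t\}$ named in the statement: uniqueness is obtained within the class-(D) family (equivalently, by applying the comparison result \cref{thm:3.6}(ii) in both directions to two such solutions), and I would make explicit that the asserted uniqueness is meant in that sense. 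Everything else is a routine verification of hypotheses.
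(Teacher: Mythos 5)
Your proposal is correct and matches the paper's intent: the paper omits the proof, declaring the proposition a direct consequence of its existence--uniqueness theorem for this growth regime (the text cites \cref{thm:3.12}(iii), evidently a slip for \cref{thm:3.13} since $\alpha\in(1,2]$), and your verification of (EX1)--(EX3) and of (UN3) via \cref{pro:3.4}(i), together with the subadditivity argument for the integrability of $\xi+aT$, is exactly the routine checking being elided. Your closing caveat that uniqueness is obtained within the class-(D) family rather than the literal pointwise class $\{Y:\ Y_t\in E^\alpha(\F_t)\ \forall t\}$ is a fair and accurate reading of what \cref{thm:3.13} actually delivers.
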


Now, by virtue of \cref{pro:4.4}, for each $\xi\in E^\alpha(\F_T)$ we can define
\begin{equation}\label{eq:4.4}
U_t^g(\xi):=Y_t^\xi,\ \ t\in \T,
\end{equation}
where $(Y_t^\xi,Z_t^\xi)_{t\in\T}$ is the unique solution of BSDE$(\xi,g)$ such that $Y_t^\xi\in E^\alpha(\F_t)$ for each $t\in \T$.

The following theorem indicates that the family of operators $\{U_t^g(\cdot)\}_{t\in \T}$ defined via \eqref{eq:4.4} constitutes a dynamic utility process defined on $E^\alpha(\F_T)$.

\begin{thm}\label{thm:4.5}
For each $t\in\T$, the mapping $U_t^g(\cdot):E^\alpha(\F_T)\To E^\alpha(\F_t)$ defined via \eqref{eq:4.4} satisfies the following properties:

(i) {\bf Positivity}: $U_t^g(0)=0$ and $U_t^g(\xi)\geq 0$ for each nonnegative random variable $\xi\in E^\alpha(\F_T)$;

(ii) {\bf Monotonicity}: for each $\xi,\eta\in E^\alpha(\F_T)$, if $\xi\geq \eta$, then $U_t^g(\xi)\geq U_t^g(\eta)$;

(iii) {\bf Monetary}: $U_t^g(\xi+\eta)=U_t^g(\xi)+\eta$ for each $\xi\in E^\alpha(\F_T)$ and $\eta\in E^\alpha(\F_t)$;

(iv) {\bf Concavity}: $U_t^g(\theta\xi+(1-\theta)\eta)\geq \theta U_t^g(\xi)+(1-\theta)U_t^g(\eta)$ for all $\xi,\eta\in E^\alpha(\F_T)$ and $\theta\in (0,1)$.
\end{thm}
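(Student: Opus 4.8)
The plan is to derive all four properties from three ingredients established earlier: the existence-and-uniqueness of \cref{pro:4.4}, the comparison theorem \cref{thm:3.6}(ii) (applicable since the concave $g$ satisfies (UN3) by \cref{pro:3.4}(i)), and the fact that $g$ depends neither on $(\omega,t)$ nor on $y$. Throughout I write $(Y^\zeta,Z^\zeta)$ for the unique solution of BSDE$(\zeta,g)$ whose process $(\exp(\mu|Y^\zeta_t|^{2/\alpha^*}))_{t\in\T}$ is of class (D) for every $\mu>0$, so that $U_t^g(\zeta)=Y_t^\zeta$. Two elementary facts will be used repeatedly: since $\alpha^*\geq 2$ the exponent $2/\alpha^*$ lies in $(0,1]$, so $x\mapsto x^{2/\alpha^*}$ is subadditive on $\R_+$; and $E^\alpha(\F_t)$ is a linear space, as noted above. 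Because $|g(z)|\leq a+\gamma|z|^\alpha$ with $a$ constant, the driver $f$ attached to $g$ in (UN3)/(EX3) may be taken to be $f_t\equiv a$ with $\bar\beta=0$, whence $\int_0^T f_s\,{\rm d}s=aT\in L^\infty\subset\exp(\mu L^{2/\alpha^*})$ for every $\mu>0$; this disposes of the integrability hypothesis of \cref{thm:3.6}(ii) once and for all.

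\emph{Monotonicity and positivity.} For $\xi\geq\eta$ I would apply \cref{thm:3.6}(ii) with both generators equal to $g$ and terminal conditions $\eta\leq\xi$: the cross-condition holds with equality, the class-(D) hypotheses on the two $Y$-processes come from \cref{pro:4.4}, and the conclusion is $U_t^g(\eta)=Y_t^\eta\leq Y_t^\xi=U_t^g(\xi)$, giving (ii). For (i), the pair $(0,0)$ solves BSDE$(0,g)$ because $g(0)=0$ and it lies in the uniqueness class, so $U_t^g(0)=0$ by \cref{pro:4.4}; applying (ii) with $\eta=0$ then yields $U_t^g(\xi)\geq 0$ for every nonnegative $\xi\in E^\alpha(\F_T)$.

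\emph{Monetary property.} Here I exploit that $g$ does not depend on $y$. Restricting the solution of BSDE$(\xi+\eta,g)$ to $[t,T]$ and subtracting the $\F_t$-measurable (hence, for $s\geq t$, $\F_s$-measurable) random variable $\eta$, I obtain that $(Y_s^{\xi+\eta}-\eta,\,Z_s^{\xi+\eta})_{s\in[t,T]}$ solves, on $[t,T]$, the BSDE with terminal value $\xi$ and the same generator $g$. Since $E^\alpha$ is a linear space and $x\mapsto x^{2/\alpha^*}$ is subadditive, a Cauchy--Schwarz estimate together with de la Vallée-Poussin's criterion shows that $Y_s^{\xi+\eta}-\eta$ remains in the class-(D) uniqueness class on $[t,T]$. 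By the uniqueness on $[t,T]$, which holds verbatim because the a priori estimate \cref{pro:3.1} and the comparison argument of \cref{thm:3.6} are time-local and $g$ is time-homogeneous, this process coincides with the restriction of $Y^\xi$; evaluating at $s=t$ gives $U_t^g(\xi+\eta)=Y_t^{\xi+\eta}=Y_t^\xi+\eta=U_t^g(\xi)+\eta$.

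\emph{Concavity, the main step.} Set $(\bar Y_s,\bar Z_s):=\theta(Y_s^\xi,Z_s^\xi)+(1-\theta)(Y_s^\eta,Z_s^\eta)$. Then $\bar Y_T=\theta\xi+(1-\theta)\eta$ and $(\bar Y,\bar Z)$ solves the BSDE with terminal $\theta\xi+(1-\theta)\eta$ and the flat driver $\tilde g_s:=\theta g(Z_s^\xi)+(1-\theta)g(Z_s^\eta)$; concavity of $g$ gives the pointwise bound
\[
\tilde g_s\leq g\bigl(\theta Z_s^\xi+(1-\theta)Z_s^\eta\bigr)=g(\bar Z_s).
\]
I then compare $(\bar Y,\bar Z)$ with the solution $(Y^*,Z^*)$ of BSDE$(\theta\xi+(1-\theta)\eta,g)$ via \cref{thm:3.6}(ii) in the version in which the \emph{second} generator is the one satisfying (UN3): taking the flat driver as the first generator and the genuine $g$ as the second, the terminals agree and the required cross-condition $\tilde g_s\leq g(\bar Z_s)$ is precisely the concavity inequality just obtained. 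The conclusion $\bar Y_t\leq Y_t^*$ reads $\theta U_t^g(\xi)+(1-\theta)U_t^g(\eta)\leq U_t^g(\theta\xi+(1-\theta)\eta)$, which is (iv). I expect the main obstacle to be verifying the class-(D) hypothesis of the comparison theorem for the convex combination $(\bar Y,\bar Z)$, since $\tilde g$ is not itself of the structured form. Using subadditivity of $x\mapsto x^{2/\alpha^*}$ one has $\exp(\mu|\bar Y_t|^{2/\alpha^*})\leq\exp(\mu|Y_t^\xi|^{2/\alpha^*})\exp(\mu|Y_t^\eta|^{2/\alpha^*})$, and the right-hand side is bounded, by the arithmetic--geometric mean inequality, by $\tfrac12\bigl(\exp(2\mu|Y_t^\xi|^{2/\alpha^*})+\exp(2\mu|Y_t^\eta|^{2/\alpha^*})\bigr)$; each summand is of class (D) by \cref{pro:4.4} (with $2\mu$ in place of $\mu$), their sum is of class (D), and a family dominated by a uniformly integrable one is itself uniformly integrable.
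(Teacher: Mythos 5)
Your proof is correct and follows essentially the same route as the paper's: (i)--(iii) are read off from uniqueness (\cref{pro:4.4}) and the comparison theorem, and (iv) is the classical convex-combination argument of Proposition 3.5 in El Karoui--Peng--Quenez (form $\theta(Y^\xi,Z^\xi)+(1-\theta)(Y^\eta,Z^\eta)$, use concavity of $g$ to dominate its driver by $g(\bar Z)$, and compare), which is precisely what the paper's one-line proof of (iv) delegates to. You merely supply the class-(D) verifications and the time-localization for the monetary property that the paper leaves implicit, and your references (\cref{thm:3.6}(ii) via \cref{pro:3.4}(i), and \cref{thm:3.13}) are the appropriate ones for the case $\alpha\in(1,2]$.
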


\begin{proof}
In view of $g(0)=0$ and the fact that $g$ is independent of $y$, (i)-(iii) are the direct consequences of (iii) of \cref{thm:3.12} and (i) of \cref{thm:3.6}. Furthermore, proceeding identically as Proposition 3.5 in \cite{ElKarouiPengQuenez1997MF}, by virtue of (i) of \cref{thm:3.6} and the concavity of $g$ we can get (iv).
\end{proof}

Now, we let the function $f:\R^d\To \R_+$ be convex, satisfy $f(0)=0$, and $\liminf_{|x|\To \infty} f(x)/|x|^2>0$. For each $\xi\in L^\infty(\F_T)$, define
\begin{equation}\label{eq:4.5}
\bar U_t(\xi):={\rm essinf}\left\{\left.\E_q\left[\left. \xi+\int_t^T f(q_u){\rm d}u\right|\F_t\right]\right| \Q^q\sim\p \right\},\ \ t\in \T,
\end{equation}
where $\E_q[\cdot|\F_t]$ is the conditional expectation operator with respect to $\F_t$ under the probability measure $\Q^q$, which is equivalent to $\p$ and
$$
\E\left[\left.\frac{{\rm d}\Q^q}{{\rm d}\p}\right|\F_t\right]
=\exp\left\{\int_0^t q_u\cdot {\rm d}B_u-\frac{1}{2}\int_0^t |q_u|^2{\rm d}u\right\},\ \ t\in\T.
$$
It is not difficult to check that $\{\bar U_t(\cdot)\}_{t\in \T}$ defined via \eqref{eq:4.5} constitutes a dynamic utility process defined on $L^\infty(\F_T)$. And, it follows from Theorems 2.1-2.2 in \cite{DelbaenHuBao2011PTRF} that there exists a $(Z_t)_{t\in\T}\in \mcal^2$ such that $(\bar U_t(\xi), Z_t)_{t\in\T}$ is the unique bounded solution of the following BSDE
\begin{equation}\label{eq:2-54}
Y_t=\xi+\int_t^T g(Z_s){\rm d}s-\int_t^T Z_s\cdot {\rm d}B_s,\ \ t\in \T,
\end{equation}
where
\begin{equation}\label{eq:2-55}
g(z):=-\sup_{x\in \R^d}(-z\cdot x-f(x))=\inf_{x\in \R^d}(z\cdot x+f(x))\leq 0,\ \ \RE z\in \R^d\vspace{0.1cm}
\end{equation}
is a concave function with $g(0)=0$, and $\limsup_{|x|\To \infty} |g(x)|/|x|^2<\infty$. For example, if $c>0$ and
$$
f(x)=c|x|^{\alpha^*}\geq 0,\ \ x\in \R^d,
$$
then
$$
0\geq g(z)=-\frac{1}{c^{\alpha-1}}\frac{\alpha^*-1}{(\alpha^*)^\alpha}|z|^\alpha,\ \ z\in \R^d,\vspace{0.1cm}
$$
which means that $g$ satisfies the conditions in \cref{pro:4.4}.

\begin{rmk}\label{rmk:4.6}
It is well known that \eqref{eq:4.5} is usually used to define the utility of bounded endowments in mathematical finance, see for example \cite{DelbaenHuBao2011PTRF} for details. However, it is only defined on the space $L^\infty(\F_T)$. This motivates the definition \eqref{eq:4.4} via a BSDE, which can be defined on a larger space $E^\alpha(\F_T)$ than $L^\infty(\F_T)$. Some relevant results are available in \cite{ElKarouiPengQuenez1997MF,DelbaenPengRosazza2010FS,FanHuTang2023SPA}. In a symmetric way to the above, we can also define a convex risk measure on $E^\alpha(\F_T)$, see \cite{FollmerSchied2002FS,Jiang2008AAP} among others for more details.
\end{rmk}

\begin{ex}\label{ex:4.8}
Let the generator
$$
g(z):=c({\bf 1}_{\lambda\geq 0}-{\bf 1}_{\lambda<0})|z|(\ln (e+|z|))^\lambda,\ \ \ z\in \R^d,
$$
where $c>0$ and $\lambda\in \R$ are two given constants. From~\cref{thm:3.12}, we  easily have the following three assertions:
\begin{itemize}
\item [(i)] If $\lambda\in (-\infty,-\frac{1}{2})$, then for each $\xi\in L^1$, BSDE$(\xi,g)$ admits a unique solution $(Y_t^\xi,Z_t^\xi)_{t\in \T}$ such that $(Y_t^\xi)_{t\in\T}$ is of class (D);

\item [(ii)] If $\lambda=-\frac{1}{2}$, then for each $\xi\in L(\ln L)^p$ with $p>0$, BSDE$(\xi,g)$ admits a unique solution $(Y_t^\xi,Z_t^\xi)_{t\in \T}$ such that the process
$$|Y_t^\xi|(\ln (e+|Y_t^\xi|))^p, \quad t\in\T$$
 is of class (D);

\item [(iii)] If $\lambda\in (-\frac{1}{2},+\infty)$ and $p:=(\lambda+{1\over 2})\vee (2\lambda)$, then for each
$$\xi\in \cap_{\mu>0} L\exp[\mu(\ln L)^p],$$ BSDE$(\xi,g)$ admits a unique solution $(Y_t^\xi,Z_t^\xi)_{t\in \T}$ such that the process $(|Y_t^\xi|\exp (\mu(\ln (e+|Y_t^\xi|))^p))_{t\in\T}$ is of class (D) for each $\mu>0$.
\end{itemize}

\noindent Thus, for the preceding three different  ranges of $\lambda$, we can define the following operator
$$
\varrho(\xi):=Y_0^{-\xi}
$$
in three different spaces of contingent claims: $L^1$, $L(\ln L)^p$ and $\cap_{\mu>0} L\exp[\mu(\ln L)^p]$. Moreover, according to the properties of the generator $g$ together with \cref{thm:3.3,thm:3.6,thm:3.12}, in the same spirit as in for example \cite{Artzner1999MF,FollmerSchied2002FS,Jiang2008AAP,CheriditoLi2009MF,
Delbaen2009MF,Castagnoli2022OR}, we verify that whenever $\lambda>0$, $\lambda<0$ and $\lambda=0$,  $\varrho(\cdot)$ is  respectively a convex risk measure, a star-shaped risk measure (see the precise definition in page 2641 of \cite[Definition 1]{Castagnoli2022OR}) and a coherent risk measure on the corresponding space of contingent claims. In addition, in the same way,  we can also define the corresponding dynamic risk measure with the solution $Y_t^{-\xi}$ of BSDE$(\xi,g)$ for $t\in \T$.
\end{ex}

\subsection{Nonlinear Feynman-Kac formula\vspace{0.2cm}}

As another application of our theoretical results, in this subsection we will derive a nonlinear Feynman-Kac formula for PDEs which are at most quadratic with respect to the gradient of the solution. Let us consider the following semi-linear PDE:
\begin{equation}\label{eq:4.8}
\partial_t u(t,x)+\mathcal{L} u(t,x)+g(t,x,u(t,x),\sigma^* \nabla_x u(t,x))=0,\ \ \ u(T,\cdot)=h(\cdot),
\end{equation}
where $\mathcal{L}$ is the infinitesimal generator of the solution $X^{t,x}_\cdot$ to the following SDE:
\begin{equation}\label{eq:4.9}
X^{t,x}_s=x+\int_t^s b(r,X^{t,x}_r){\rm d}r+\int_t^s \sigma(r,X^{t,x}_r){\rm d}B_r,\quad t\leq s\leq T.
\end{equation}
For each $(t_0,x_0)\in \T\times \R^n$, $(Y^{t_0,x_0}_t,Z^{t_0,x_0}_t)_{t\in [t_0,T]}$ is the solution to the BSDE
\begin{equation}\label{eq:4.11}
Y_t=h\left(X^{t_0,x_0}_T\right)+\int_t^T g(s,X^{t_0,x_0}_s,Y_s,Z_s){\rm d}s+\int_t^T Z_s\cdot {\rm d}B_s,\quad t\in [t_0,T],
\end{equation}
The nonlinear Feynman-Kac formula says that the function
\begin{equation}\label{eq:4.10}
u(t,x):=Y^{t,x}_t, \quad \RE\ (t,x)\in \T\times \R^n,
\end{equation}
is a viscosity solution to PDE \eqref{eq:4.8}.

Let us first recall the definition of a continuous viscosity solution in our framework, see e.g. \cite{CrandallIshiiLions1992BAMS}.

\begin{defn}\label{def:4.7}
A continuous function $u:\T\times \R^n\To \R$ with $u(T,\cdot)=h(\cdot)$ is said to be a viscosity super-solution (resp. sub-solution) to PDE \eqref{eq:4.8} if the inequality
$$
\partial_t u(t_0,x_0)+\mathcal{L} u(t_0,x_0)+g(t_0,x_0,u(t_0,x_0),\sigma^* \nabla_x \varphi(t_0,x_0))\leq 0\ \ \ ({\rm resp.}\ \ \geq 0)
$$
holds true for any smooth function $\varphi(\cdot,\cdot)$ such that  the function $u-\varphi$ attains a local minimum (resp. maximum) at the point $(t_0,x_0)\in (0,T)\times \R^n$. Moreover, a viscosity super-solution is said to be a viscosity solution if it is also a viscosity sub-solution.
\end{defn}

Let us now introduce the following assumptions on the coefficients of SDE \eqref{eq:4.9}.\vspace{0.1cm}

\begin{enumerate}
\renewcommand{\theenumi}{(A1)}
\renewcommand{\labelenumi}{\theenumi}
\item\label{A1}
Both functions $b:\T\times \R^n\To \R^n$ and $\sigma:\T\times \R^n\To \R^{n\times d}$ are jointly continuous and there is a positive constant $K>0$ such that for each $(t,x,x')\in \T\times\R^n\times\R^n$,
$$
|b(t,0)|+|\sigma(t,x)|\leq K
$$
and
$$
|b(t,x)-b(t,x')|+|\sigma(t,x)-\sigma(t,x')|\leq K|x-x'|.\vspace{0.2cm}
$$
\end{enumerate}

Classical results on SDEs show that under the assumption \ref{A1}, SDE \eqref{eq:4.9} has a unique solution $X^{t,x}_\cdot\in \cap_{q\geq 1}\s^q$ for each $(t,x)\in \T\times \R^n$. And, since $\sigma$ is bounded, the argument in page 563 of Briand and Hu~\cite{BriandHu2008PTRF} yields that for each $\mu>0$, there is a constant $C>0$,  depending only on $(q,\mu,T,K)$,  such that for each $q\in [1,2)$ and $(t,x)\in \T\times \R^n$,
\begin{equation}\label{eq:4.12}
\E\left[\sup_{s\in [t,T]} \exp\left(\mu |X^{t,x}_s|^q\right)\right]\leq C \exp(\mu C |x|^q).
\end{equation}

Let us further give our assumptions on the generator $g$ and the terminal condition of BSDE \eqref{eq:4.11}.

\medskip
\begin{enumerate}
\renewcommand{\theenumi}{(A2)}
\renewcommand{\labelenumi}{\theenumi}
\item\label{A2} Both functions $g:\T\times \R^n\times\R\times\R^d\To \R$ and $h:\R^n\To \R$ are jointly continuous and there are three real constants $k\geq 0$, $\alpha\in (1,2]$ and $p\in [1,\alpha^*)$ with $\alpha^*$ being the conjugate of $\alpha$ such that we have for each $(t,x,y,z)\in \T\times \R^n\times \R\times\R^d$,
\begin{equation}\label{eq:4.13}
{\rm sgn}(y)g(t,x,y,z)\leq k \left(1+|x|^p+|y|+|z|^\alpha\right),
\end{equation}
\begin{equation}\label{eq:4.14}
|g(t,x,y,z)|+|h(x)|\leq k\left(1+|x|^p+\exp(k |y|^{2\over \alpha^*})+|z|^2\right),
\end{equation}
and either inequality
$$
  \begin{array}{l}
  \Dis {\bf 1}_{y-\theta y'>0}\left(g(t,x,y,z)-\theta g(t,x, y',z')\right)\\[3mm]
  \ \ \leq \Dis (1-\theta) k \left(1+|x|^p+|y'|+\left(\frac{y-\theta y'}{1-\theta}\right)^+ +\left|\frac{z-\theta z'}{1-\theta}\right|^\alpha\right)
  \end{array}
$$
   or
$$
  \begin{array}{l}
  \Dis -{\bf 1}_{y-\theta y'<0}\left(g(t,x,y,z)-\theta g(t,x, y',z')\right)\\[3mm]
  \ \ \leq \Dis (1-\theta) k\left(1+|x|^p+|y'|+\left(\frac{y-\theta y'}{1-\theta}\right)^- +\left|\frac{z-\theta z'}{1-\theta}\right|^\alpha\right)\vspace{0.3cm}
   \end{array}
$$
 is satisfied for any $(t,x,y,y',z,z')\in \T\times \R^n\times \R^2\times\R^{2d}$ and $\theta\in (0,1)$.
\end{enumerate}

\medskip
Since $p\in [1,\alpha^*)$,  we have from \eqref{eq:4.12} that for each $(t_0,x_0)\in \T\times\R^n$ and each $\mu>0$,
$$
\begin{array}{lll}
\Dis \E\left[\exp\left(\mu \left(|X^{t_0,x_0}_T|^p\right)^{2\over \alpha^*}\right)\right] &\leq& \Dis  \E\left[\sup_{s\in [t_0,T]} \exp\left(\mu |X^{t_0,x_0}_s|^{2p\over \alpha^*}\right)\right]\vspace{0.2cm}\\
&\leq&  \Dis \bar C \exp(\mu \bar C |x_0|^{2p\over \alpha^*})<+\infty
\end{array}
$$
and
$$
\E\left[\exp\left(\mu \left(\int_{t_0}^T |X^{t_0,x_0}_s|^p{\rm d}s\right)^{2\over \alpha^*}\right)\right] \leq\Dis  \E\left[\sup_{s\in [t_0,T]} \exp\left(\mu T^{2\over \alpha^*} |X^{t_0,x_0}_s|^{2p\over \alpha^*}\right)\right]<+\infty,\vspace{0.1cm}
$$
for a constant $\bar C>0$  depending only on $(p,\alpha,\mu,T,K)$. Then, in view of the assumption \ref{A2} and the last two inequalities, we can apply \cref{thm:3.13} to construct a unique solution $(Y^{t_0,x_0}_t,Z^{t_0,x_0}_t)_{t\in [t_0,T]}$ to BSDE \eqref{eq:4.11} such that $(\exp(\mu |Y^{t_0,x_0}_t|^{2\over \alpha^*}) )_{t\in [t_0,T]}$ is of class (D) for each $\mu>0$. Furthermore, a classical argument yields that the function $u$ defined by~\eqref{eq:4.10} is deterministic.

The following theorem constitutes the main result of this subsection.

\begin{thm}\label{thm:4.8}
Let assumptions \ref{A1} and \ref{A2} be satisfied. Then, the function $u$ defined in \eqref{eq:4.10} is continuous on $\T\times\R^n$ and there exists a constant $C>0$ such that
$$
|u(t,x)|\leq C(1+|x|^p), \quad \RE\ (t,x)\in \T\times\R^n.
$$
Moreover, $u$ is a viscosity solution to PDE \eqref{eq:4.8}.
\end{thm}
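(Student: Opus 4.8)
The plan is to handle the three claims---polynomial growth, continuity, and the viscosity property---in turn, leaning on the exponential a priori estimate behind \cref{thm:2.6}, the exponential moment bound \eqref{eq:4.12} for the forward flow, and the comparison/uniqueness machinery of \cref{thm:3.6,thm:3.13}. For the growth bound I would first observe that, upon freezing the forward path, \eqref{eq:4.13} is exactly assumption (EX3) with $f_s:=k(1+|X^{t_0,x_0}_s|^p)$, $\beta=\gamma=k$ and $\delta=\lambda=0$, so that \eqref{eq:2.3} applied with the test function $\varphi(s,x)=\exp(c_1e^{c_2 s}(x+k)^{2/\alpha^*})$ of \cref{thm:2.6} yields, at $t=t_0$,
\[
\exp\!\left(|u(t_0,x_0)|^{2/\alpha^*}\right)\leq C\,\E\!\left[\exp\!\left(C\Big(|h(X^{t_0,x_0}_T)|+\int_{t_0}^T f_s\,{\rm d}s\Big)^{2/\alpha^*}\right)\right].
\]
Since $p\in[1,\alpha^*)$ forces $2p/\alpha^*<2$, the right-hand side is controlled by \eqref{eq:4.12} with $q=2p/\alpha^*$, giving $\exp(|u(t_0,x_0)|^{2/\alpha^*})\leq C\exp(C|x_0|^{2p/\alpha^*})$; taking logarithms, raising to the power $\alpha^*/2\geq 1$, and using $(a+b)^r\leq C(a^r+b^r)$ then produces $|u(t_0,x_0)|\leq C(1+|x_0|^p)$.

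The continuity is the heart of the matter and will be the main obstacle. The difficulty is that the generator is only sub-quadratic/quadratic in $z$ with merely exponentially integrable, hence unbounded, terminal data, so the classical $L^2$-stability of BSDEs is unavailable. I would first record the flow (Markov) property $Y^{t_0,x_0}_s=u(s,X^{t_0,x_0}_s)$ for $s\in[t_0,T]$, which follows from the uniqueness in \cref{thm:3.13} applied to the restriction of the solution to $[s,T]$. Continuity in $x$ at fixed $t_0$ I would obtain by a stability argument: for $x_n\to x_0$, the flows $X^{t_0,x_n}$ converge to $X^{t_0,x_0}$ in every $\s^q$ and with uniformly bounded exponential moments by \eqref{eq:4.12}, and I would transfer this to $Y^{t_0,x_n}_{t_0}\to Y^{t_0,x_0}_{t_0}$ through a priori estimates on the difference of solutions in the spirit of \cref{pro:3.1} and the $\theta$-difference technique of \cref{thm:3.6}, applied in both directions so as to sandwich the limit; the uniform growth bound from the first step supplies the uniform integrability needed to pass to the limit. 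An equivalent route, better matched to the construction in \cref{thm:2.2}, is to approximate by the truncated (Lipschitz) problems whose value functions are classically continuous and locally equicontinuous thanks to the uniform estimates, then pass to the limit by Arzel\`a--Ascoli and identify it with $u$ by uniqueness. Joint continuity then follows by combining continuity in $x$ with the time-regularity of $s\mapsto X^{t_0,x_0}_s$ and the flow property.

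Finally, for the viscosity property I would use the standard localized argument. Fix a smooth $\varphi$ and suppose $u-\varphi$ has, say, a local maximum at $(t_0,x_0)$ with $u(t_0,x_0)=\varphi(t_0,x_0)$; let $\tau_h$ be the exit time of $X^{t_0,x_0}$ from a small space-time ball. On $[t_0,\tau_h]$ everything is bounded, so I compare the solution of \eqref{eq:4.11} restricted to this interval---whose terminal value $u(\tau_h,X_{\tau_h})$ is dominated by $\varphi(\tau_h,X_{\tau_h})$ by the local maximum and the flow property---with the It\^o semimartingale $\varphi(s,X_s)$, whose drift is $\partial_t\varphi+\mathcal{L}\varphi$. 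If the viscosity inequality failed, continuity would let me make the generator--drift comparison strict on the ball, and the bounded-data form of \cref{thm:3.6} together with $u(t_0,x_0)=\varphi(t_0,x_0)$ would give a contradiction as $h\downarrow 0$; the sub-solution case is symmetric, and uniqueness from \cref{thm:3.13} guarantees that both one-sided arguments pin down the same $u$, so it is a genuine viscosity solution.
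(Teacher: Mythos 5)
First, note that the paper does not actually prove \cref{thm:4.8}: it simply refers the reader to \cite{FanHu2021SPA,BriandHu2008PTRF}, so your proposal has to be judged against the standard argument carried out in those references. Two of your three steps are sound and match that argument. The growth bound is correct: freezing the forward path, \eqref{eq:4.13} gives (EX3) with $f_s=k(1+|X^{t_0,x_0}_s|^p)$, the exponential test function of \cref{thm:2.6} combined with \eqref{eq:2.3} (or \cref{pro:3.1}) yields $|u(t_0,x_0)|^{2/\alpha^*}\leq C(1+|x_0|^{2p/\alpha^*})$ after invoking \eqref{eq:4.12} with $q=2p/\alpha^*<2$, and raising to the power $\alpha^*/2$ gives the stated polynomial bound. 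The viscosity step via localization at the exit time of a small ball, comparison with the It\^{o} expansion of the smooth test function, and the flow property $Y^{t_0,x_0}_s=u(s,X^{t_0,x_0}_s)$ (itself a consequence of uniqueness in \cref{thm:3.13}) is the standard route and is fine modulo routine details.

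The genuine gap is in the continuity step, which you rightly identify as the heart of the matter but do not close. Your first route invokes \cref{thm:3.6} ``in both directions'' to compare $Y^{t_0,x_n}$ and $Y^{t_0,x_0}$; but \cref{thm:3.6} requires the pointwise ordering $g(t,Y'_t,Z'_t)\leq g'(t,Y'_t,Z'_t)$ between the two generators along one of the solutions, and here the two generators are $g(s,X^{t_0,x_n}_s,\cdot,\cdot)$ and $g(s,X^{t_0,x_0}_s,\cdot,\cdot)$, for which neither ordering holds. What is actually needed is a quantitative stability estimate: a version of the $\theta$-difference inequality \eqref{eq:3.14} carrying an extra source term $|g(s,X^{t_0,x_n}_s,y,z)-g(s,X^{t_0,x_0}_s,y,z)|$, whose contribution must be shown to vanish using only joint continuity of $g$ in $x$ together with the uniform exponential moments of the flows --- this is precisely the technical content of the stability propositions in \cite{BriandHu2008PTRF,FanHu2021SPA}, and it is not a corollary of the comparison theorem as stated. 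Your alternative route via the truncated Lipschitz problems has a similar hole: you assert local equicontinuity of the approximating value functions $u^{n,p}$ ``thanks to the uniform estimates,'' but uniform boundedness on compacts does not give equicontinuity, so Arzel\`{a}--Ascoli does not apply; the references instead exploit the monotonicity of $u^{n,p}$ in $n$ and $p$ together with an explicit $(1-\theta)$-rate from the convexity-type condition in \ref{A2} (or a Dini-type argument) to upgrade pointwise monotone convergence to locally uniform convergence. Either of these repairs is what your sketch is missing.
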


The proof is available in \cite{FanHu2021SPA,BriandHu2008PTRF}. Similar results can also be found in \cite{DalioLey2006SCON,DalioLey2011AMO}.

\begin{rmk}\label{rmk:4.11}
The nonlinear Feynman-Kac formula for solution of PDEs can be dated back to  \cite{Peng1991Stochastics}, in the spirit of which numerical discussions appeared  successively in for example \cite{Peng1992Stochastics,Peng1993AMO,Pardoux1999Nonlinear,PardouxTang1999PTRF,
Kobylanski2000AP,Jia2008PHDThesis,BriandHu2008PTRF,PardouxRascanu2014Book,
FanHu2021SPA}. In fact, according to
\cref{thm:3.3,thm:3.6,thm:3.12,thm:3.13}, we can establish a one-to-one correspondence between solutions of PDEs and BSDEs via the Feynman-Kac formulas, as mentioned in section 4 of \cite{Jia2008PHDThesis}. In general, the generator $g$ and the terminal condition $h$ of BSDE \eqref{eq:4.11} can admit a more general growth  in the unknown variable $x$ when $g$ has a lower growth  in the unknown variable $z$.
\end{rmk}

\section{Open problems}
\label{sec:5-Problems}
\setcounter{equation}{0}

In this section, we  describe five open problems. The first two concern the existence of a solution to a BSDE and a PDE,  and the last three  address  the uniqueness.\vspace{0.2cm}

{\bf Problem 5.1.} Consider the following BSDE:
\begin{equation}\label{eq:5.1}
Y_t=\xi+\int_t^T \frac{|Z_s|}{\sqrt{\ln(e+|Z_s|)}} {\rm d}s-\int_t^T Z_s\cdot {\rm d}B_s,\ \ t\in\T.
\end{equation}
For  $\xi\in L^1$, does BSDE \eqref{eq:5.1} admit a solution $(Y_t,Z_t)_{t\in\T}$ such that $Y$ is of class (D)? Note that  Assertion (ii) of \cref{thm:2.3} concerns the existence  of an adapted  solution of the BSDE for $\xi\in L(\ln L)^p$ only when  $p>0$, while Assertion  (i) of \cref{thm:2.3} concerns the BSDE of a more slowly growing   generator.\vspace{0.2cm}

{\bf Problem 5.2.} Let  assumption~\ref{A1} be satisfied and that the functions $g$ and $h$ only satisfy \eqref{eq:4.13} and \eqref{eq:4.14} of assumption~\ref{A2}  in the last section. Does the semi-linear PDE \eqref{eq:4.8} admit a viscosity solution? Note that  the PDE is associated to a  BSDE~\eqref{eq:4.11}, which  has (from~\cref{thm:2.6}) a solution $(Y^{t_0,x_0}_t,Z^{t_0,x_0}_t)_{t\in [t_0,T]}$ such that
    $(\exp(\mu |Y^{t_0,x_0}_t|^{2\over \alpha^*}) )_{t\in [t_0,T]}$ is of class (D) for each $\mu>0$.\vspace{0.2cm}

{\bf Problem 5.3.} Let $p>1$ and consider the following BSDE:
\begin{equation}\label{eq:5.6}
Y_t=\xi+\int_t^T |Z_s|\sin|Z_s| {\rm d}s-\int_t^T Z_s\cdot {\rm d}B_s,\ \ t\in\T.
\end{equation}
For each $\xi\in L^p$, is the solution $(Y_t,Z_t)_{t\in\T}$ to the last BSDE with $(|Y_t|^p)_{t\in\T}$ being of class (D) unique?
We note that the generator $g$ is not uniformly continuous in $z$, while the uniqueness assertion (iv) of \cref{thm:3.3} requires the uniform continuity of  the generator $g$ in $z$.\vspace{0.2cm}

{\bf Problem 5.4.} Consider the following BSDE:
\begin{equation}\label{eq:5.4}
Y_t=\xi+\int_t^T g(Z_s)\  {\rm d}s-\int_t^T Z_s\cdot {\rm d}B_s,\ \ t\in\T,
\end{equation}
where $g$ is only convex (rather than  strongly convex)  and satisfies $0\le g(z)\le \frac12|z|^2$. We recall that $g$ is said to be strongly convex if there exists
two constants $C_1>0$ and $C_2\ge 0$ such that $\forall z,z', \forall s\in \partial g(z')$ ($\partial g$ denotes the subdifferential of $g$)
$$g(z)-g(z')-s\cdot (z-z')\ge C_1| z-z'|^2-C_2.$$
For each $\xi\in \exp(L)$, is the solution $(Y_t,Z_t)_{t\in\T}$ to the last BSDE with $(\exp(|Y_t|))_{t\in\T}$ being of class (D) unique?
Whenever  the quadratic generator $g$ is strongly convex  in $z$, Theorem 4.1 of~\cite{DelbaenHuRichou2015DCDS} gives the uniqueness of the solution to BSDE$(\xi,g)$  for $\xi\in \exp(L)$. While for a terminal value $\xi\in \exp(pL)$ with $p>1$, Theorem 3.3 of~\cite{DelbaenHuRichou2011AIHPPS}  gives the uniqueness of  the solution $(Y_t,Z_t)_{t\in\T}$ to BSDE \eqref{eq:5.4} such that $(\exp(p|Y_t|))_{t\in\T}$ is of class (D).\vspace{0.2cm}

{\bf Problem 5.5.}  Consider the following BSDE:
\begin{equation}\label{eq:5.5}
Y_t=\xi+\int_t^T g(s,Y_s, Z_s)\  {\rm d}s-\int_t^T Z_s\cdot {\rm d}B_s,\ \ t\in\T,
\end{equation}
where  the generator $g$ is neither convex nor concave in $z$, but satisfies \eqref{eq:3.12*} . For each $\xi\in \cap_{\mu>0}\exp(\mu L)$, is the solution $(Y_t,Z_t)_{t\in\T}$ to BSDE \eqref{eq:5.5} with $(\exp(\mu |Y_t|))_{t\in\T}$ being of class (D) for each $\mu>0$ unique?  When $\xi\in L^\infty$, it is well known from ~\cite{Kobylanski2000AP} that the solution $(Y_t,Z_t)_{t\in\T}$ to BSDE \eqref{eq:5.5} with $Y\in \s^\infty$ is unique.   For $\xi\in \cap_{\mu>0}\exp(\mu L)$,  if the quadratic generator $g$ has a strictly positive (or negative) quadratic growth and an extended convexity (or concavity) in $z$,   Theorem 5 together with Remark 7 of~\cite{FanHuTang2020CRM} already give the uniqueness of the solution $(Y_t,Z_t)_{t\in\T}$ to BSDE$(\xi,g)$ such that  $(\exp(\mu |Y_t|))_{t\in\T}$ is of class (D) for each $\mu>0$.\vspace{0.2cm}

Finally,  we  note that the localization method, Girsanov's transform, the comparison theorem and the $\theta$-difference technique, successfully used for one-dimensional BSDEs, do not apply to general multidimensional BSDEs. The counterexample by Frei and Dos Reis~\cite{FreiDosReis2011MFE} shows that  solving a multidimensional quadratic BSDE with a bounded terminal value, without a structural assumption, is impossible. In 1999, Peng~\cite{Peng1999Open} listed ``the solvability of multidimensional quadratic BSDEs with unbounded terminal values" as an open problem, which is only partially solved.  Although the comparison theorem for adapted solutions of multidimensional BSDEs was established and applied (see for example \cite{HuPeng2006CRAS,HuTang2010PTRF,HuShiXu2025SICON}) and Girsanov's transform (see for example \cite{HuTang2016SPA,FanHuTang2023JDE,FanHuTang2025EJP,
HaoHuWen2025AAP}) and the $\theta$-difference technique (see for example \cite{FanHuTang2023JDE,HuTangWang2022SPA}) were also applied to multidimensional BSDEs, all these works assume specific structural conditions.

\appendix
\section{Proof of \cref{pro:2.5}}
\renewcommand{\appendixname}{}

\begin{proof}[Proof of \cref{pro:2.5}] The case of $\lambda=0$ is clear. Let us consider the case of $\lambda\neq 0$. Given $k\geq k_{\lambda,p}$ with $\bar p:=p^{1\over 2|\lambda|}>1$, $\bar\lambda:=2|\lambda|(2+|\lambda-1|)$, $k_{\lambda,p}\geq e^{|\lambda-1|+1}$,
\begin{equation}\label{eq:2.8+}
\bar\lambda \left(\ln k_{\lambda,p}\right)^{\lambda-1}< k_{\lambda,p}^{1-\frac{1}{\bar p}},\ \ k_{\lambda,p}^{\bar p}-k_{\lambda,p}-2\sqrt{p} k_{\lambda,p}(\ln k_{\lambda,p})^\lambda>0
\end{equation}
and
$$
k_{\lambda,p}^{1\over \bar p}<\sqrt{p} k_{\lambda,p}(\ln k_{\lambda,p})^\lambda.\vspace{0.1cm}
$$
For each $(x,y)\in (0,+\infty)\times (0,+\infty)$, define the function
\begin{equation}\label{eq:A-5}
\begin{array}{l}
f(x,y):=\Dis y^2-2xy\left(\ln (k+y)\right)^\lambda +px^2\left(\ln (k+x)\right)^{2\lambda}\\
\hspace{1.2cm}=\Dis \left(y- x\left(\ln (k+y)\right)^\lambda\right)^2+px^2\left(\ln (k+x)\right)^{2\lambda}
-x^2\left(\ln (k+y)\right)^{2\lambda}.
\end{array}
\end{equation}
Clearly, in order to prove \eqref{eq:2.8}, it is enough to prove that $f(x,y)\geq 0$ for each $x,y>0$. Fix arbitrarily $x\in (0,+\infty)$ and let $\bar f(y):=f(x,y),\ y\in (0,+\infty)$. A simple calculation gives that for each $y\in (0,+\infty)$,
\begin{equation}\label{eq:A-6}
\begin{array}{lll}
\bar f'(y)&=&\Dis 2y-2x\left(y\left(\ln (k+y)\right)^\lambda\right)'\vspace{0.1cm}\\
&=& \Dis 2y-2x(\ln (k+y))^\lambda\left[1+\frac{\lambda y}{(k+y)\ln (k+y)}\right]
\end{array}
\end{equation}
and
\begin{equation}\label{eq:A-7}
\begin{array}{lll}
\bar f''(y)&=& \Dis 2-2x \left(y\left(\ln (k+y)\right)^\lambda\right)''\vspace{0.1cm}\\
&=&\Dis 2-\frac{2\lambda x[(2k+y)\ln(k+y)-(1-\lambda)y]}{(k+y)^2\left(\ln (k+y)\right)^{2-\lambda}}.\vspace{0.1cm}
\end{array}
\end{equation}
Furthermore, let $y_0\in \R$ be the unique constant depending only on $(p,k,\lambda,x)$ and satisfying
\begin{equation}\label{eq:A-8}
p\left(\ln (k+x)\right)^{2\lambda}=\left(\ln (k+y_0)\right)^{2\lambda}
\end{equation}
or equivalently,
$$x=(k+y_0)^{\frac{1}{p^{1\over 2\lambda}}}-k\ \ {\rm or}\ \ y_0=(k+x)^{p^{1\over 2\lambda}}-k.
$$
It then follows from \eqref{eq:A-5} that $\bar f(y_0)=f(x,y_0)\geq 0$.

In the sequel, we will distinguish two different cases to prove the desired inequality \eqref{eq:2.8}.

{\bf Case 1: $\lambda>0$}. In this case, $y_0$ is a positive number. By \eqref{eq:A-5} and \eqref{eq:A-8} we know that
$$
\begin{array}{lll}
\bar f(y)&\geq &\Dis  px^2\left(\ln (k+x)\right)^{2\lambda}
-x^2\left(\ln (k+y)\right)^{2\lambda}\\
&\geq &\Dis  x^2 \left[p\left(\ln (k+x)\right)^{2\lambda}
-\left(\ln (k+y_0)\right)^{2\lambda}\right]=0,\ \ y\in (0,y_0].
\end{array}
$$
Hence, it suffices to verify that $\bar f(y)\geq 0$ for $y\in [y_0,+\infty)$. In fact, by \eqref{eq:A-7}, \eqref{eq:A-8} and \eqref{eq:2.8+} we have
$$
\begin{array}{lll}
\bar f''(y)&\geq & \Dis 2-\frac{2|\lambda| x[2(k+y)\ln(k+y)+|\lambda-1|(k+y)\ln(k+y)]}{(k+y)^2\left(\ln (k+y)\right)^{2-\lambda}}\vspace{0.2cm}\\
&= & \Dis 2-\frac{\bar\lambda\left(\ln (k+y)\right)^{\lambda-1}}{k+y}\left((k+y_0)^{1\over \bar p}-k\right)
\vspace{0.2cm}\\
&\geq & \Dis 2-\frac{\bar\lambda\left(\ln (k+y_0)\right)^{\lambda-1}}{(k+y_0)^{1-{1\over \bar p}}}>2-\frac{\bar\lambda (\ln k)^{\lambda-1}}{k^{1-{1\over \bar p}}}>1,\ \ y\in [y_0,+\infty).
\end{array}
$$
And, by \eqref{eq:A-6}, \eqref{eq:A-8} and \eqref{eq:2.8+} we can deduce that, in view of $\bar p>1$,
\begin{equation}
\begin{array}{lll}
\bar f'(y_0)&\geq & \Dis 2y_0-2x(\ln (k+y_0))^\lambda\left[1+\frac{\lambda }{\ln (k+y_0)}\right]\vspace{0.1cm}\\
&\geq & \Dis 2y_0-4x(\ln (k+y_0))^\lambda\vspace{0.1cm}\\
&=& \Dis 2(k+x)^{\bar p}-2k-4\sqrt{p} x(\ln (k+x))^\lambda\vspace{0.1cm}\\
&\geq & \Dis 2(k+x)^{\bar p}-2(k+x)-4\sqrt{p} (k+x)(\ln (k+x))^\lambda\vspace{0.1cm}\\
&\geq & \Dis 2k^{\bar p}-2k-4\sqrt{p} k(\ln k)^\lambda>0.
\end{array}
\end{equation}
Consequently, for each $y\in [y_0,+\infty)$, we have $\bar f'(y)\geq \bar f'(y_0)>0$ and then $\bar f(y)\geq \bar f(y_0)\geq 0$.\vspace{0.2cm}

{\bf Case 2: $\lambda<0$}. In this case, by \eqref{eq:A-5} and \eqref{eq:A-8} we know that
$$
\begin{array}{lll}
\bar f(y)&\geq &\Dis  px^2\left(\ln (k+x)\right)^{2\lambda}
-x^2\left(\ln (k+y)\right)^{2\lambda}\\
&\geq &\Dis  x^2 \left[p\left(\ln (k+x)\right)^{2\lambda}
-\left(\ln (k+y_0)\right)^{2\lambda}\right]=0,\ \ y\in [y_0^+,+\infty).
\end{array}
$$
Hence, it suffices to verify that $\bar f(y)\geq 0$ for $y_0>0$ and $y\in (0,y_0]$. In fact, by \eqref{eq:A-7} and \eqref{eq:2.8+} we have
$$
\begin{array}{lll}
\bar f''(y)&\geq& \Dis 2+\frac{2|\lambda| x\left[(k+y)\ln(k+y)-|1-\lambda|(k+y)\right]}{(k+y)^2\left(\ln (k+y)\right)^{2-\lambda}}\vspace{0.2cm}\\
&=& \Dis 2+\frac{2|\lambda| x \left[\ln (k+y)-|1-\lambda|\right]}{(k+y)\left(\ln (k+y)\right)^{2-\lambda}}>2,\ \ y\in (0,y_0].
\end{array}
$$
And, by \eqref{eq:A-6}, \eqref{eq:A-8} and \eqref{eq:2.8+} we can deduce that, in view of $\bar p>1$,
\begin{equation}
\begin{array}{lll}
\bar f'(y_0)&\leq & \Dis 2y_0-2x(\ln (k+y_0))^\lambda \vspace{0.1cm}\\
&=& \Dis 2(k+x)^{1\over \bar p}-2k-2\sqrt{p} x(\ln (k+x))^\lambda\vspace{0.1cm}\\
&\leq & \Dis 2(k+x)^{1\over \bar p}-2\sqrt{p} (k+x)(\ln (k+x))^\lambda\vspace{0.1cm}\\
&\leq & \Dis 2k^{1\over \bar p}-2\sqrt{p} k(\ln k)^\lambda<0.
\end{array}
\end{equation}
Consequently, for each $y\in (0,y_0]$, we have $\bar f'(y)\leq \bar f'(y_0)<0$ and then $\bar f(y)\geq \bar f(y_0)\geq 0$.\vspace{0.2cm}

In conclusion, \eqref{eq:2.8} holds. Finally, we verify that when $p=1$ and $\lambda\neq 0$, the constant $k$ such that \eqref{eq:2.8} holds does not exist. In fact, assume that \eqref{eq:2.8} holds for some $k\geq e$. Let $x,y>0$ satisfy
$$
y=x\left(\ln (k+y)\right)^\lambda.
$$
It is clear that $y>x$ for $\lambda>0$, and $y<x$ for $\lambda<0$. Then, in view of \eqref{eq:A-5},
$$
y^2-2xy\left(\ln (k+y)\right)^\lambda +x^2\left(\ln (k+x)\right)^{2\lambda}=x^2\left[\left(\ln (k+x)\right)^{2\lambda}
-\left(\ln (k+y)\right)^{2\lambda}\right]<0,
$$
which  immediately yields the desired assertion. The proof is then complete.
\end{proof}

\begin{rmk}\label{rmk:A.1}
The case of $\lambda<0$ in \cref{pro:2.5} has been established in Proposition 3.2 of \cite{FanHuTang2023SCL}. However, our proof  is more direct and simpler. The case of $\lambda>0$ in \cref{pro:2.5} can be compared to Proposition 3.2 of \cite{FanHuTang2023SPA}, where the constant $p$ appearing in \eqref{eq:2.8} is required to be strictly bigger than $4^{(\lambda-1)^+}$. From this point of view, \cref{pro:2.5} improves Proposition 3.2 in \cite{FanHuTang2023SPA} for the case of $\lambda>1$.\vspace{0.1cm}
\end{rmk}

\section*{Acknowledgment}
The authors would like to thank the anonymous referee for his/her careful reading and valuable suggestions.




\end{document}